\def\Z{{\mathbb Z}} 
\def\Q{{\mathbb Q}} 
\def\C{{\mathbb C}} 
\def\P{{\mathbb P}}
\def\V{{\mathbb V}}
\def\cC{{\mathcal C}}
\def\cD{{\mathcal D}}
\def\d{{\mathfrak{d}}}
\def\Y{{\mathcal{Y}}}
\def\bG{{\mathbb{G}}}
\def\cG{{\mathcal{G}}}
\def\g{{\mathfrak{g}}}
\def\hyp{{\mathrm{hyp}}}
\def\H{{\mathcal H}}
\def\h{{\mathfrak{h}}}
\def\L{{\mathbb{L}}}
\def\M{{\mathcal M}}
\def\O{{\mathcal{O}}}
\def\cP{{\mathcal P}}
\def\p{{\mathfrak{p}}}
\def\r{{\mathfrak{r}}}
\def\T{{\mathcal T}}
\def\t{{\mathfrak t}}
\def\U{{\mathcal U}}
\def\u{{\mathfrak{u}}}
\def\V{{\mathbb{V}}}
\def\cV{{\mathcal V}}
\def\v{{\mathfrak v}}
\def\X{{\mathcal X}}
\def\Z{{\mathbb{Z}}}
\def\D{{\Delta}} 
\def\G{{\Gamma}}
\def\Ql{{\Q_\ell}}
\def\bs{\backslash} 
\def\ab{\mathrm{ab}}
\def\an{\mathrm{an}}
\def\hyp{\mathrm{hyp}}
\def\orb{\mathrm{orb}} 
\def\out{\mathrm{out}}
\def\alg{\mathrm{alg}} 
\def\un{\mathrm{un}}
\def\sp{\mathfrak{sp}}
\newcommand{\Gr}{\operatorname{Gr}}
\newcommand{\Hom}{\operatorname{Hom}}
\newcommand{\Inn}{\operatorname{Inn}}
\newcommand{\Lie}{\operatorname{Lie}}
\newcommand{\Out}{\operatorname{Out}}
\newcommand{\Der}{\operatorname{Der}}
\newcommand{\cts}{\operatorname{cts}}
\newcommand\im{\operatorname{im}} 
\newcommand\Aut{\operatorname{Aut}} 
\newcommand\Diff{\operatorname{Diff}}
\newcommand\Sp{\operatorname{Sp}} 
\newtheorem{theorem}{Theorem}[section]
\newtheorem{lemma}[theorem]{Lemma}
\newtheorem{proposition}[theorem]{Proposition}
\newtheorem{corollary}[theorem]{Corollary}
\newtheorem{bigtheorem}{Theorem}
\newtheorem{bigcorollary}[bigtheorem]{Corollary}
\theoremstyle{definition}
\newtheorem{example}[theorem]{Example}
\theoremstyle{remark}
\newtheorem{remark}[theorem]{Remark}
\begin{document}

\title[On the Birman exact sequence of the subgps of MCG of genus three]{On the Birman exact sequence of the subgroups of the mapping class group of genus three
}
\author{
Ma Luo \and Tatsunari Watanabe
}
\address{School of Mathematical Sciences,  Key Laboratory of MEA (Ministry of Education), Shanghai Key Laboratory of PMMP,  East China Normal University, Shanghai}
\email{mluo@math.ecnu.edu.cn}
\address{Mathematics Department, Embry-Riddle Aeronautical University, Prescott}
\email{watanabt@erau.edu}

\thanks{The first author is supported in part by National Natural Science Foundation of China (No. 12201217), by Shanghai Pilot Program for Basic Research, and by Science and Technology Commission of Shanghai Municipality (No. 22DZ2229014).}

\begin{abstract}
\smallskip
We prove that for any finite index subgroup of the mapping class group containing the Johnson subgroup, the profinite Birman exact sequence does not split in genus $g\ge 3$, extending prior results \cite{hain_rational, wat_rk} for $g\ge 4$. 
For the Torelli group, we prove that the graded Lie algebra version of the Birman exact sequence admits no section with symplectic equivariance, extending Hain's result \cite{hain_inf} from $g\ge4$ to $g=3$. These results are deduced by our main tool, relative completion, with the help of Hodge theory and representation theory of symplectic groups, along with explicit structural obstructions coming from hyperelliptic mapping class groups \cite{wat_hyp_univ}.
\end{abstract}
\maketitle

\setcounter{tocdepth}{1}
\tableofcontents
\section{Introduction}
Let $S_{g,n}$ be an oriented topological surface of genus $g$ with $n$ distinct marked points. The mapping class group $\G_{g,n}$ of $S_{g,n}$ is the group of isotopy classes of orientation-preserving diffeomorphisms of $S_{g,n}$ fixing the $n$ marked points pointwise. There is a natural symplectic representation of $\G_{g,n}$, which we denote by $\rho:\G_{g,n}\to\Sp(H_1(S_g;\Z))$. The kernel of $\rho$ is called the Torelli group, denoted by $T_{g,n}$. 
The Birman exact sequence for the mapping class group 
\begin{equation}\label{birman seq}
1 \to \pi_1(S_g) \to \Gamma_{g,1} \to \Gamma_g \to 1
\end{equation}
is known to not split for $g \geq 2$ (see \cite[Cor.~5.11]{farb_marg}). It follows from a result of Hain in \cite{hain_rational} that the profinite Birman exact sequence
\begin{equation}\label{profinite birman seq}
1 \to \widehat{\pi_1(S_g)} \to \widehat{\Gamma_{g,1}} \to \widehat{\Gamma_g} \to 1
\end{equation}
also does not split for $g \geq 4$ (see also \cite[Thm.~1]{wat_rk}). Furthermore, Chen proved in \cite{chen_torelli} that the Birman exact sequence for the Torelli group 
\begin{equation}\label{birman torelli}
1 \to \pi_1(S_g) \to T_{g,1} \to T_g \to 1
\end{equation}
does not split for $g \geq 4$. Chen and Salter \cite{chen_salter} also proved that the Birman exact sequence does not split for any finite index subgroup of the mapping class group when $g \geq 4$.

In this paper, we primarily investigate the nonsplitting phenomenon in the profinite and graded Lie algebra analogs of the Birman exact sequence for certain subgroups of mapping class groups. Notably, the nonsplitting of these profinite variants implies the nonsplitting of the original Birman exact sequence for the corresponding discrete subgroups.

More precisely, we study the profinite Birman exact sequence for certain finite index subgroups of the mapping class group and the Torelli group for the cases $g \geq 3$. Denote the {\it Johnson subgroup} of $\G_g$ by $K_g$. It is the kernel of the Johnson homomorphism (defined in \cite{john_ab}) $\tau_g: T_g\to \Lambda^3_0H_\Z:=(\Lambda^3H_\Z)/\theta\wedge H_\Z$, where $H_\Z=H_1(S_g;\Z)$ and $\theta \in \Lambda^2 H_\Z$ is the polarization, and it is generated by Dehn twists about separating simple closed curves. For a subgroup $\G$ of $\G_g$, pulling back the Birman exact sequence (\ref{birman seq}) for $\G_g$ along the inclusion $\G\hookrightarrow \G_g$, we obtain the Birman exact sequence for $\G$:
\begin{equation}\label{birman for subgroup}
1\to \pi_1(S_g)\to \G'\to \G\to 1.
\end{equation}
Furthermore, applying profinite completion, we obtain the profinite Birman exact sequence for the profinite completion $\widehat{\G}$ of $\G$:
\begin{equation}\label{profinite birman for subgroup}
1\to \widehat{\pi_1(S_g)}\to \widehat{\G'}\to \widehat{\G}\to 1.
\end{equation}
\begin{bigtheorem}\label{main thm for subgrp of mg} 
Suppose that $\G$ is a finite-index subgroup of $\G_g$ containing $K_g$. 
If $g\geq 3$, the profinite Birman exact sequence (\ref{profinite birman for subgroup}) for $\widehat{\G}$ does not split. Consequently, the Birman exact sequence (\ref{birman for subgroup}) for $\G$ does not split for $g\geq 3$. 
\end{bigtheorem}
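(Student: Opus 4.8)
The plan is to deduce the nonsplitting of the profinite Birman exact sequence \eqref{profinite birman for subgroup} from a splitting obstruction living in the world of relative (pro-unipotent) completions, where Hodge theory and representation theory of $\Sp$ can be brought to bear. First I would reduce to a single group: since $\G$ is finite-index in $\G_g$ and contains the Johnson subgroup $K_g$, it suffices (by a standard transfer/restriction argument, using that a section for $\widehat{\G'}\to\widehat{\G}$ would restrict to a section over any smaller finite-index subgroup, and conversely a section over $\G$ is detected on $K_g$) to produce the obstruction already at the level of $K_g$, or more precisely to work with the Torelli group $T_g$ and track $\Sp$-equivariance; the hypothesis $K_g\subseteq\G$ is exactly what guarantees that the relevant ``Johnson-type'' cohomology class is seen by $\G$. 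So the first step is to set up the relative completion $\G_g^{\mathrm{rel}}$ (relative to $\rho:\G_g\to\Sp(H_\Z)$) and the corresponding exact sequence of pro-algebraic (or pro-unipotent) groups, together with its graded Lie algebra incarnation, and to recall from \cite{hain_rational, hain_inf} the dictionary: a section of \eqref{profinite birman for subgroup} would yield, after completion, a section of the relative-completion Birman sequence, hence a Lie-algebra section compatible with the $\Sp$-action and with the mixed Hodge structure.

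Next I would identify the precise cohomological obstruction. On the graded/Hodge side, the obstruction to an $\Sp$-equivariant section of
\begin{equation*}
0 \to \p \to \g \to \g/\p \to 0
\end{equation*}
(where $\p=\Lie$ of the completion of $\pi_1(S_g)$ and $\g/\p$ the relative-completion Lie algebra of $\G_g$) lives in a group like $H^1\bigl(\g/\p,\ \p\bigr)^{\Sp}$, and one computes its lowest-weight part in terms of $H^1$ of the Torelli Lie algebra with coefficients in $H=H_1(S_g)$. The key input is Hain's computation of the presentation of the Torelli Lie algebra in low weights together with Watanabe's structural results on the hyperelliptic mapping class group \cite{wat_hyp_univ}: restricting everything along the hyperelliptic locus gives an explicit, computable quotient in which the putative section would force a specific $\Sp$-equivariant map to exist, and one shows by a representation-theoretic multiplicity count (decomposing the relevant weight-two piece into irreducible $\Sp_6$-representations and checking that the copy of $H$ needed for a section simply does not appear, or appears with the wrong sign/Hodge weight) that it cannot. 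This is where $g=3$ is genuinely harder than $g\ge 4$: the ``stable'' vanishing that makes the $g\ge4$ argument clean fails, so one must use the $g=3$-specific decomposition and the hyperelliptic obstruction of \cite{wat_hyp_univ} as a replacement.

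The passage back from the non-existence of a Hodge/$\Sp$-equivariant Lie-algebra section to the non-existence of a section of the profinite sequence \eqref{profinite birman for subgroup} is the third step: a profinite section would, upon applying $\Q_\ell$-pro-unipotent (relative) completion and invoking that these completions carry compatible Galois/Hodge structures (so that the resulting Lie-algebra section is automatically $\Sp$- and weight-equivariant, by strictness of morphisms of MHS), contradict step two. I would then read off the discrete statement: the ordinary Birman sequence \eqref{birman for subgroup} maps to its profinite completion, and any section of the former would profinitely complete to a section of the latter (here one uses that $\pi_1(S_g)$ is conjugacy separable / the completion functor is exact on this sequence, as in \cite{wat_rk}), so \eqref{birman for subgroup} cannot split either.

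The main obstacle I anticipate is Step two — pinning down the obstruction class precisely enough in the non-stable range $g=3$. In particular, one must (i) control the low-weight structure of the relative completion Lie algebra of $\G_3$ and of $\p$ as $\Sp_6$-modules with their Hodge weights, (ii) verify that the hyperelliptic restriction of \cite{wat_hyp_univ} still detects the class after passing to a finite-index $\G$ containing $K_g$ (this is why the hypothesis on $\G$ is present, and it must be checked that no section survives the restriction), and (iii) handle the representation theory of $\Sp_6$ by hand, since the convenient stable-range vanishing theorems are unavailable; I expect the decomposition of $\Lambda^3 H / (\theta\wedge H)$ and the second graded piece of the Torelli Lie algebra, tensored with $H$, to be the crux of the computation.
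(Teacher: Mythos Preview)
Your overall architecture is right --- pass from a hypothetical profinite section through continuous relative completion to an $\Sp(H_{\Q_\ell})$-equivariant graded Lie algebra section of $\Gr^W_\bullet\u_{g,1/\Q_\ell}\to\Gr^W_\bullet\u_{g/\Q_\ell}$, and contradict this using the hyperelliptic obstruction of \cite{wat_hyp_univ}. But two of your steps misidentify where the work actually lies.

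First, your ``reduction to $K_g$ by transfer/restriction'' is not the mechanism, and as written it cannot work: $K_g$ has infinite index in $\G_g$, so you cannot restrict a section over $\widehat{\G}$ down to $K_g$ in any useful way. The actual role of the hypothesis $K_g\subseteq\G$ is different and much cleaner: by \cite[Cor.~6.7]{hain_g3}, for any finite-index $\G\subseteq\G_g$ containing $K_g$, the relative completion of $\G$ with respect to $\rho$ is canonically isomorphic to that of $\G_g$. So the induced maps $\Gr^W_\bullet\u_{\G/\Q_\ell}\to\Gr^W_\bullet\u_{g/\Q_\ell}$ and $\Gr^W_\bullet\u_{\G'/\Q_\ell}\to\Gr^W_\bullet\u_{g,1/\Q_\ell}$ are already $\Sp$-equivariant isomorphisms, and a section over $\widehat{\G}$ immediately yields one of $\Gr^W_\bullet\u_{g,1/\Q_\ell}\to\Gr^W_\bullet\u_{g/\Q_\ell}$. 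There is no transfer argument and no need to ``detect on $K_g$''.

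Second, your Step~2 conflates two different obstructions. The weight-$(-2)$ computation you sketch --- finding (or failing to find) a copy of $H$ or $\Lambda^2_0H$ in the bracket image of $\Lambda^2(\Lambda^3_0H)$ --- is exactly the approach that \emph{fails} in genus $3$, because $\Lambda^2(\Lambda^3_0H)$ contains no copy of $\Lambda^2_0H\cong\Gr^W_{-2}\p$ when $g=3$. The paper does not attempt a direct $\Sp_6$ multiplicity count in weight $-2$. Instead, the argument is purely by pullback: the closed immersion $\H_{g/\C}\hookrightarrow\M_{g/\C}$ induces an $\Sp(H_{\Q_\ell})$-equivariant map $\Gr^W_\bullet\v_{g,n/\Q_\ell}\to\Gr^W_\bullet\u_{g,n/\Q_\ell}$, and since the resulting square is a pullback on the graded level, an $\Sp$-equivariant section of $\Gr^W_\bullet\u_{g,1/\Q_\ell}\to\Gr^W_\bullet\u_{g/\Q_\ell}$ would pull back to one of $\Gr^W_\bullet\v_{g,1/\Q_\ell}\to\Gr^W_\bullet\v_{g/\Q_\ell}$. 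The nonsplitting on the hyperelliptic side is then quoted from \cite{wat_hyp_univ,wat_rk_hyp_univ}, where the obstruction lives in weight $-4$: there are elements $\delta_1,\delta_2\in H_1(\v_g)\subset\Gr^W_{-2}\v_{g,1}$ whose bracket is nonzero in $\Gr^W_{-4}\p$, and an $\Sp$-equivariant section is forced to send their images back to $\delta_1,\delta_2$ (because $H_1(\v_g)$ admits no nonzero $\Sp$-map to $\Gr^W_{-2}\p\cong V(\lambda_2)$). So the ``main obstacle'' you anticipate --- a hands-on $\Sp_6$ decomposition of the weight-two piece tensored with $H$ --- never arises; the genus-$3$ difficulty is bypassed by restricting to the hyperelliptic locus and working in weight $-4$, not overcome by a harder computation in weight $-2$.
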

For $g \geq 3$, examples of these finite-index subgroups of $\Gamma_g$ containing $K_g$ include the fundamental groups of moduli spaces of Riemann surfaces of genus $g$ with an abelian level structure, Prym level structure (defined in \cite{bog_pik_prym}), or a root of their canonical bundle.

For the Torelli group, we consider the Malcev Lie algebra $\t_{g,n}$ of $T_{g,n}$ (see \S 4).  Hain proved in \cite{hain_inf} that $\t_{g,n}$ is finitely presented for $g \geq 3$. The Birman exact sequence (\ref{birman torelli}) for the Torelli group induces an exact sequence of graded Lie algebras associated to the lower central series $L^\bullet\t_{g,n}$:
\begin{equation}\label{graded malcev seq}
0 \to \Gr^\bullet_L\p \to \Gr^\bullet_L\t_{g,1} \to \Gr^\bullet_L\t_g \to 0,
\end{equation}
where $\p$ is the Malcev Lie algebra of $\pi_1(S_g)$. Johnson's computation of $H_1(T_g)$ in \cite{john_torelli3} implies that each graded piece $\Gr^m_L\t_{g,n}$ is an $\Sp_{2g}(\Q)$-representation. Our second main result concerns the question of whether the sequence \eqref{graded malcev seq} splits for $g=3$.

\begin{bigtheorem}\label{main thm for tg} 
If $g\geq 3$, the projection $\Gr^\bullet_L\t_{g,1} \to \Gr^\bullet_L\t_g$ has no $\Sp_{2g}(\Q)$-equivariant graded Lie algebra section. 
\end{bigtheorem}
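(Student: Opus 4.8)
The plan is to convert the existence of an $\Sp_{2g}(\Q)$-equivariant graded Lie algebra section of (\ref{graded malcev seq}) into the nonvanishing of a canonical $\Sp_{2g}(\Q)$-equivariant obstruction homomorphism, to recover Hain's theorem \cite{hain_inf} in this form when $g\ge4$, and then to establish it for $g=3$ --- where Hain's mechanism degenerates --- with the help of the explicit hyperelliptic computations of \cite{wat_hyp_univ}.

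First I would reduce to the relations of $\Gr^\bullet_L\t_g$. By Johnson's computation $\Gr^1_L\t_g=\Lambda^3_0 H$, and by \cite{hain_inf} the Lie algebra $\Gr^\bullet_L\t_g$ is generated in degree one for $g\ge3$; write $\Gr^\bullet_L\t_g=\mathfrak L/\r_g$, with $\mathfrak L$ the free graded Lie algebra on $\Lambda^3_0 H$ and $\r_g$ its ideal of relations. In degree one, (\ref{graded malcev seq}) is the exact sequence $0\to H\to\Lambda^3 H\to\Lambda^3_0 H\to 0$, and since $H$ occurs with multiplicity one in $\Lambda^3 H$, Schur's lemma forces the degree-one part of any equivariant section to be the unique equivariant splitting $s_1\colon\Lambda^3_0 H\hookrightarrow\Lambda^3 H$. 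As $\Gr^\bullet_L\t_g$ is generated in degree one, $s_1$ extends uniquely to a graded Lie algebra homomorphism $\widetilde s\colon\mathfrak L\to\Gr^\bullet_L\t_{g,1}$, and an equivariant section of (\ref{graded malcev seq}) exists if and only if $\widetilde s(\r_g)=0$. Since every relation maps to $0$ in $\Gr^\bullet_L\t_g$, the homomorphism $\widetilde s$ carries $\r_g$ into $\Gr^\bullet_L\p$ and induces a canonical $\Sp_{2g}(\Q)$-equivariant graded homomorphism
\[
\theta_g\colon\ \r_g/[\r_g,\mathfrak L]\ \longrightarrow\ \Gr^\bullet_L\p
\]
from the graded space of defining relations of $\Gr^\bullet_L\t_g$; the theorem is equivalent to $\theta_g\ne0$. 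Here $\Gr^\bullet_L\p$ is the surface-group Lie algebra, free on $H$ modulo the quadratic relation $\theta\in\Lambda^2 H$: its degree-two piece is $\Lambda^2 H/\Q\theta=\Lambda^2_0 H$, and, by a direct computation, its degree-three piece is for $g=3$ an irreducible $\Sp_6(\Q)$-representation of dimension $64$.

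For $g\ge4$, the nonvanishing of $\theta_g$ is Hain's theorem \cite{hain_inf}: in the stable range the representation theory of $\Sp_{2g}$ in low degrees is large enough to exhibit a relation of $\Gr^\bullet_L\t_g$ --- arising from the structure of $\Gr^{\le2}_L\t_{g,1}$, where a symplectic combination of brackets of degree-one generators is expressed through the point-pushing subalgebra --- that $\widetilde s$ does not annihilate, the relevant target being the copy of $\Lambda^2_0 H=\Gr^2_L\p$ inside $\Gr^2_L\t_{g,1}$. This degenerates at $g=3$: using $\Lambda^3 H\cong\Lambda^3_0 H\oplus H$ one has
\[
\Lambda^2(\Lambda^3_0 H)\ \cong\ \Lambda^2(\Lambda^3 H)\ \ominus\ (\Lambda^3_0 H\otimes H)\ \ominus\ \Lambda^2 H ,
\]
and a character computation shows that for $\Sp_6(\Q)$ this is the direct sum of the trivial representation and one irreducible of dimension $90$; in particular it contains no copy of $\Lambda^2_0 H$. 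Hence $\widetilde s$ annihilates every degree-two relation of $\t_3$ automatically, the degree-two component of $\theta_3$ vanishes identically, and $\Gr^\bullet_L\t_3$ cannot be quadratically presented --- otherwise a section would exist --- so the obstruction must be sought in higher degree.

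Pinning down that higher-degree obstruction is the main difficulty. I would look in degree three, where $\Gr^3_L\p$ is the $64$-dimensional irreducible above: the aim is to exhibit, among the degree-three defining relations of $\Gr^\bullet_L\t_3$ that are not consequences of the degree-two relations, a submodule on which $\widetilde s$ is nonzero. The obstacle is that, unlike the stable range $g\ge4$, the relations of $\t_3$ beyond degree two are not determined by symplectic representation theory alone. Here \cite{wat_hyp_univ} is the key input: restricting the hypothetical section along the genus-three hyperelliptic Torelli subgroup of $T_3$ --- whose relative completion, its Birman-type exact sequence, and the relevant relations are computed explicitly in \cite{wat_hyp_univ} from the monodromy of the universal hyperelliptic curve, where the ambient geometry reduces essentially to configuration spaces of points on $\P^1$ --- produces a concrete, finite-dimensional Lie-algebraic obstruction forcing the relevant degree-three relation of $\t_3$ to survive under $\widetilde s$. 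Carrying out this computation, and checking that the restriction of a section of (\ref{graded malcev seq}) to the hyperelliptic locus is compatible with the structures of \cite{wat_hyp_univ}, is the substantive step; granting it, $\theta_3\ne0$, hence $\theta_g\ne0$ for all $g\ge3$, and (\ref{graded malcev seq}) admits no $\Sp_{2g}(\Q)$-equivariant graded Lie algebra section.
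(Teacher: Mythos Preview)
Your framework via the obstruction map $\theta_g$ is sound, and the degree-two vanishing for $g=3$ is correctly identified. But the proof has a genuine gap at the decisive step: you explicitly write ``granting it'' for the computation that would establish $\theta_3\neq0$, and the mechanism you sketch for extracting this from \cite{wat_hyp_univ} is not right. You propose to ``restrict along the hyperelliptic Torelli subgroup'' $T\D_3\subset T_3$, but \cite{wat_hyp_univ} does not compute the Malcev completion of $T\D_3$ (which is not even known to be finitely generated); it computes the unipotent radical $\v_g$ of the \emph{relative} completion of the hyperelliptic \emph{mapping class} group $\D_g$, whose abelianization $H_1(\v_g)$ is pure of weight $-2$, and the obstruction there is a bracket of two weight~$-2$ elements landing nontrivially in $\Gr^W_{-4}\p$. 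There is no direct restriction map $\t_g\to\v_g$, and nothing in that computation hands you a cubic relation of $\t_3$.

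The paper's route is different and avoids locating any specific relation of $\t_3$. It makes two short reductions. First, since $\t_{g,n}\to\u_{g,n}$ is surjective with central kernel $\Q(1)$ sitting in weight $-2$, and $\Gr^W_{-2}\p\cong V(\lambda_2)$ has no trivial summand, any $\Sp(H)$-equivariant graded section of $\Gr^W_\bullet\t_{g,1}\to\Gr^W_\bullet\t_g$ descends to one of $\Gr^W_\bullet\u_{g,1}\to\Gr^W_\bullet\u_g$. Second, the inclusion $\H_g\hookrightarrow\M_g$ yields, via naturality of relative completion, a pullback diagram identifying $\Gr^W_\bullet\v_{g,1}$ with the fiber product $\Gr^W_\bullet\u_{g,1}\times_{\Gr^W_\bullet\u_g}\Gr^W_\bullet\v_g$; hence an equivariant section for $\u$ pulls back to one for $\v$. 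One then invokes \cite{wat_hyp_univ} directly: because $H_1(\v_g)$ contains no copy of $V(\lambda_2)=\Gr^W_{-2}\p$, the weight $-2$ part of any equivariant section of $\Gr^W_\bullet\v_{g,1}\to\Gr^W_\bullet\v_g$ is forced, and an explicit pair $\delta_1,\delta_2$ with $[\bar\delta_1,\bar\delta_2]=0$ in $\Gr^W_{-4}\v_g$ but $[\delta_1,\delta_2]\neq0$ in $\Gr^W_{-4}\p$ rules out the section.
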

For $g\geq 4$, Theorem \ref{main thm for tg} follows from \cite[Prop.~11.2]{hain_inf}. As a consequence, we have the following result.
\begin{bigcorollary} \label{no sp on ab of tg}
For $g\geq3$, the surjection $T_{g,1}\to T_g$ has no section that induces an $\Sp_{2g}(\Q)$-equivariant map on the abelianization over $\Q$. 
\end{bigcorollary}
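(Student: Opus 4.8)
The plan is to reduce Corollary \ref{no sp on ab of tg} to Theorem \ref{main thm for tg} by a functoriality argument, using that the passage from a group section to a graded Lie algebra section is faithful in degree one and that $\Gr^\bullet_L\t_g$ is generated in degree one. So suppose, toward a contradiction, that for some $g\geq 3$ the Birman surjection $T_{g,1}\to T_g$ admits a homomorphic section $s\colon T_g\to T_{g,1}$ whose induced map on abelianizations over $\Q$, i.e. the map $s_*\colon H_1(T_g;\Q)\to H_1(T_{g,1};\Q)$, is $\Sp_{2g}(\Q)$-equivariant. First I would apply the Malcev completion functor: since it is a functor and since the Malcev completion of the Birman projection $T_{g,1}\to T_g$ is the projection $\t_{g,1}\to\t_g$ whose associated graded is the surjection in \eqref{graded malcev seq}, the section $s$ induces a homomorphism of Malcev Lie algebras $\hat s\colon\t_g\to\t_{g,1}$ with $(\t_{g,1}\to\t_g)\circ\hat s=\id_{\t_g}$.

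Next I would pass to the associated graded Lie algebras for the lower central series filtrations. Every Lie algebra homomorphism respects the lower central series, and $\Gr^\bullet_L$ is functorial, so $\hat s$ induces a homomorphism of graded Lie algebras $\Gr^\bullet_L\hat s\colon \Gr^\bullet_L\t_g\to\Gr^\bullet_L\t_{g,1}$ satisfying $(\Gr^\bullet_L\t_{g,1}\to\Gr^\bullet_L\t_g)\circ\Gr^\bullet_L\hat s=\id$; that is, $\Gr^\bullet_L\hat s$ is a graded Lie algebra section of the surjection in \eqref{graded malcev seq}. Its degree-one component is the map $\Gr^1_L\t_g\to\Gr^1_L\t_{g,1}$, which under the canonical identifications $\Gr^1_L\t_{g,n}=H_1(T_{g,n};\Q)$ is exactly $s_*$, and this is $\Sp_{2g}(\Q)$-equivariant by hypothesis (the $\Sp_{2g}(\Q)$-action on each $\Gr^m_L\t_{g,n}$, and in particular on $H_1(T_{g,n};\Q)$, being the one from Johnson's computation, induced by the conjugation action of $\G_{g,n}$).

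Finally I would upgrade equivariance in degree one to equivariance in all degrees. For $g\geq 3$ the graded Lie algebra $\Gr^\bullet_L\t_g$ is generated by its degree-one part (see \cite{hain_inf}), and the actions of $\Sp_{2g}(\Q)$ on $\Gr^\bullet_L\t_g$ and on $\Gr^\bullet_L\t_{g,1}$ are by automorphisms of graded Lie algebras; moreover these actions are compatible with the maps of \eqref{graded malcev seq}, which is immediate from the $\G_{g,1}$-equivariance of the Birman sequence \eqref{birman torelli}. Hence, for each $\gamma\in\Sp_{2g}(\Q)$, the two graded Lie algebra homomorphisms $\gamma\circ\Gr^\bullet_L\hat s$ and $(\Gr^\bullet_L\hat s)\circ\gamma$ from $\Gr^\bullet_L\t_g$ to $\Gr^\bullet_L\t_{g,1}$ restrict to the same map $\gamma\circ s_*=s_*\circ\gamma$ on the degree-one generators, hence coincide; thus $\Gr^\bullet_L\hat s$ is $\Sp_{2g}(\Q)$-equivariant, contradicting Theorem \ref{main thm for tg}. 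I do not expect a genuine obstacle here: the only non-formal inputs are the structural facts underlying \eqref{graded malcev seq} and the degree-one generation of $\Gr^\bullet_L\t_g$ at $g=3$, both available from Hain's work, so the entire mathematical content sits in Theorem \ref{main thm for tg}; the one point deserving a careful line is that the $\Sp_{2g}(\Q)$-actions on the three terms of \eqref{graded malcev seq} are intertwined by its maps, which follows from functoriality of Malcev completion applied to the $\G_{g,1}$-equivariant Birman sequence.
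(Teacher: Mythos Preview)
Your proposal is correct and follows essentially the same approach as the paper: pass from the group section to a section of Malcev Lie algebras, take associated graded for the lower central series, and use that $\Gr^\bullet_L\t_g$ is generated in degree one to promote the assumed $\Sp_{2g}(\Q)$-equivariance of $s_*$ in degree one to equivariance of the full graded section, contradicting Theorem~\ref{main thm for tg}. The only cosmetic difference is that the paper routes the argument through $\Q_\ell$ (via profinite completion and the $\Q_\ell$-version of Theorem~\ref{main thm for tg}), whereas you work directly over $\Q$; since Theorem~\ref{main thm for tg} is already stated over $\Q$, your more direct route is fine.
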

Our main tool is the relative completion of a discrete group. Fix an involution $\sigma$ of $S_g$, and denote the hyperelliptic mapping class group of $S_g$ determined by $\sigma$ by $\D_g$. Let $\D_{g,n}$ be the fiber product $\D_g\times_{\G_g}\G_{g,n}$. Let $\cG_{g,n}$ and $\cD_{g,n}$ be the relative completions of $\G_{g,n}$ and $\D_{g,n}$ and $\U_{g,n}$ and $\cV_{g,n}$ be their unipotent radicals, respectively (see \S 5 for the definition). There are exact sequences:
$$
1\to \U_{g,n}\to \cG_{g,n}\to \Sp(H)\to 1
$$
and 
$$
1\to \cV_{g,n}\to \cD_{g,n}\to \Sp(H)\to 1,
$$
where $H:=H_1(S_g;\Q)$.
By a result of Hain in \cite{hain_hodge}, the Lie algebras $\u_{g,n}$ and $\v_{g,n}$ of $\U_{g,n}$ and $\cV_{g,n}$, respectively, admit natural weight filtrations $W_\bullet\u_{g,n}$ and $W_\bullet\v_{g,n}$ from Hodge theory. Hain proved in \cite{hain_inf} that the Lie algebra $\t_{g,n}$ admits a $\Q$-mixed Hodge structure lifting that of $\u_{g,n}$. The canonical map  $T_{g,n}\to \U_{g,n}$ produced by relative completion induces a morphism of MHSs $\t_{g,n}\to \u_{g,n}$, which makes the diagram
$$
\xymatrix@R=1em@C=2em{\t_{g,1}\ar[r]\ar[d]&\u_{g,1}\ar[d]\\
\t_g\ar[r]&\u_g
}
$$
commute. Furthermore, there is a fiber product diagram of graded associated Lie algebras
$$
\xymatrix@R=1em@C=2em{
\Gr^W_\bullet\u_{g,1}\ar[d]&\ar[l]\Gr^W_\bullet\v_{g,1}\ar[d]\\
\Gr^W_\bullet\u_{g}&\ar[l]\Gr^W_\bullet\v_g.
}
$$

For $g\geq 4$, the obstruction for the splitting of the sequence (\ref{profinite birman seq}) essentially comes from the fact that as an $\Sp_{2g}(\Q)$ representation, $\Lambda^2\left(\Lambda^3_0H\right)$ contains a copy of the irreducible representation $\Lambda^2_0H:=\Lambda^2H/\langle \theta\rangle$. There are $\Sp(H)$-equivariant isomorphisms
$$
\Gr^W_{-1}\u_{g,1}\cong H\oplus \Lambda^3_0H \,\,\,\,\text{ and } \Gr^W_{-2}\p\cong \Lambda^2_0H,
$$
and the results in \cite[\S 11]{hain_inf} show that there exists a pair of vectors in $\Lambda^3_0H\subset\Gr^W_{-1}\u_{g,1}$ that maps to $\Gr^W_{-2}\p$ via the bracket operation of $\Gr^W_\bullet\u_{g,1}$.  
The issue for the case $g=3$ is that the representation $\Lambda^2\left(\Lambda^3_0H\right)$ does not contain the representation $\Lambda^2_0H$. In order to overcome this issue, we use variants of the results in \cite{wat_hyp_univ} stating that the projection $\Gr^W_\bullet \v_{g,1} \to \Gr^W_\bullet \v_g$ does not admit an $\Sp_{2g}(\Q)$-equivariant section.

 The outline of the proof of Theorem \ref{main thm for tg} is as follows. The lower central series $L^\bullet \t_{g,n}$ \textcolor{black}{essentially} agrees with the weight filtration $W_\bullet \t_{g,n}$.
 An $\Sp_{2g}(\Q)$-equivariant graded Lie algebra section for $\Gr^W_\bullet \t_{g,1} \to \Gr^W_\bullet \t_g$ induces an $\Sp_{2g}(\Q)$-equivariant section for $\Gr^W_\bullet \u_{g,1} \to \Gr^W_\bullet \u_g$, and consequently for $\Gr^W_\bullet \v_{g,1} \to \Gr^W_\bullet \v_g$, which admits no such section.\\

\noindent
\emph{Acknowledgement:} The authors would like to thank Nick Salter for communicating with us the result that he collaborated with Lei Chen about the nonsplitting of the Birman exact sequence for any finite index subgroup of the mapping class group in $g\ge 4$ \cite{chen_salter}. Our results partially extend theirs in certain specific cases, and our method is completely different from theirs.

\section{The mapping class groups and hyperelliptic mapping class groups}
Suppose that $g$ and $n$ are nonnegative integers satisfying $2g-2 +n >0$. Let $S_{g,n}$ be a compact oriented surface of genus $g$ with $n$ distinct marked points $\{x_1,\ldots, x_n\}$. The mapping class group $\G_{g,n}$ is defined as the group of isotopy classes of orientation-preserving diffeomorphisms of $S_{g,n}$ that fix the $n$ marked points pointwise:
$$
\G_{g,n}:=\mathrm{Diff}^+(S_{g,n}, \{x_1, \ldots, x_n\})/\sim,
$$
where $\sim$ denotes the isotopy relation. For the case $n=0$, denote $S_{g,0}$ and $\G_{g,0}$ by $S_g$ and $\G_g$, respectively.

Consider an orientation-preserving diffeomorphism, denoted by $\sigma$, of order 2, fixing exactly $2g+2$ points. We call it a hyperelliptic involution $\sigma$ and the $2g+2$ fixed points are the Weierstrass points of $S_g$.
By the Riemann-Hurwitz formula, $S_g/\langle \sigma\rangle$ is a sphere, and hence it follows that all such involutions are conjugate in $\Diff^+(S_g)$. 
The hyperelliptic mapping class group $\D_g$ is defined as the group of isotopy classes of elements in the centralizer of $\sigma$ in $\Diff^+(S_g)$. We have the following remarkable result by Birman and Hilden. 
\begin{theorem}[Birman-Hilden \cite{birman-hilden}] 
The natural homomorphism $\D_g \to \G_g$ is injective. Its image is the 
centralizer of the isotopy class of $\sigma$ in $\G_g$. 
\end{theorem}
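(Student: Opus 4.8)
The plan is to work directly with the branched double covering $p : S_g \to S_g/\langle\sigma\rangle \cong S^2$. Write $W$ for the set of $2g+2$ Weierstrass points (the fixed locus of $\sigma$) and $B = p(W)\subset S^2$ for the branch points, so that $p$ restricts to a connected degree-two covering $S_g\setminus W \to S^2\setminus B$. That the image of $\D_g\to\G_g$ is contained in the centralizer $Z_{\G_g}([\sigma])$ is immediate: if $f\in\Diff^+(S_g)$ commutes with $\sigma$, then $[f]$ commutes with $[\sigma]$. So the theorem has two substantive parts: (i) every mapping class commuting with $[\sigma]$ has a representative commuting with $\sigma$, and (ii) the map $\D_g\to\G_g$ is injective, i.e. two diffeomorphisms commuting with $\sigma$ that are isotopic are already isotopic through diffeomorphisms commuting with $\sigma$ (the Birman--Hilden property of $p$). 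I would prove (i) by a short Nielsen-realization argument and (ii) via the topology of diffeomorphism groups; part (ii) is the crux.

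For (i), let $[\phi]$ commute with $[\sigma]$, so that $\phi\sigma\phi^{-1}$ and $\sigma$ are orientation-preserving involutions representing the same mapping class. I would invoke the fact --- a consequence of the solution of the Nielsen realization problem together with the connectedness of the fixed locus of a finite-order mapping class in Teichm\"uller space --- that two isotopic finite-order diffeomorphisms of $S_g$ are conjugate by a diffeomorphism isotopic to the identity. Choosing $h\simeq\id$ with $h(\phi\sigma\phi^{-1})h^{-1}=\sigma$, the diffeomorphism $h\phi$ commutes with $\sigma$ and still represents $[\phi]$, which proves (i). (The complementary lifting step --- that every $\bar f\in\Diff^+(S^2)$ permuting $B$ lifts to a diffeomorphism of $S_g$ commuting with $\sigma$, because Riemann--Hurwitz forces the genus-$g$ double cover of $S^2$ branched in $B$ to be branched at all of $B$, so the cover is classified by the homomorphism $\pi_1(S^2\setminus B)\to\Z/2$ sending every puncture loop to $1$, a datum any permutation of $B$ preserves up to conjugacy --- is then what identifies $\D_g$ with a mapping class group of the marked sphere, but is not needed for the statement as phrased.)

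For (ii), the clean reformulation uses the fiber bundle $\mathrm{SDiff}^+(S_g)\to\Diff^+(S_g)\to\mathcal H$, where $\mathrm{SDiff}^+(S_g)$ is the centralizer of $\sigma$ and $\mathcal H\cong\Diff^+(S_g)/\mathrm{SDiff}^+(S_g)$ is the space of hyperelliptic involutions --- a single $\Diff^+(S_g)$-conjugacy class, as recalled above. Its homotopy exact sequence contains $\pi_1(\Diff^+(S_g))\to\pi_1(\mathcal H)\to\pi_0(\mathrm{SDiff}^+(S_g))\to\pi_0(\Diff^+(S_g))$, and injectivity of $\D_g\to\G_g$ amounts to triviality of the map $\pi_1(\mathcal H)\to\pi_0(\mathrm{SDiff}^+(S_g))$, equivalently to surjectivity of $\pi_1(\Diff^+(S_g))\to\pi_1(\mathcal H)$. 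The hard part --- and where I expect essentially all the difficulty to lie --- is controlling these fundamental groups of infinite-dimensional spaces. For $g\ge 2$ I would bring in the theorem of Earle--Eells that the identity component $\Diff_0^+(S_g)$ is contractible, together with an analysis of the component of $\mathcal H$ containing $\sigma$ --- the space of hyperelliptic involutions isotopic to $\sigma$ --- showing it is simply connected via its relation to the fixed locus of $[\sigma]$ in $\mathcal{T}(S_g)$, which is a nonempty cell; these inputs collapse the relevant portion of the exact sequence and force the injectivity. Equivalently and more geometrically, one fixes a $\sigma$-invariant hyperbolic metric and straightens a given isotopy $f\simeq\id$ by a canonical --- hence $\sigma$-equivariant --- procedure to a symmetric isotopy to a symmetric isometry isotopic to the identity, which must be $\id$. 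The low-genus cases $g\le 2$, where $[\sigma]$ is central and the statement partially degenerates, need minor separate treatment, but the architecture is uniform.
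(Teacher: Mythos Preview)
The paper does not prove this theorem; it is stated with a citation to Birman--Hilden \cite{birman-hilden} and used as a black box. So there is no ``paper's proof'' to compare against, and your proposal should be assessed on its own merits.

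Part (i) is fine: the assertion that two isotopic finite-order diffeomorphisms are conjugate by an element of $\Diff_0^+(S_g)$ is correct and follows, as you say, from the contractibility of the fixed locus in Teichm\"uller space.

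Part (ii) has a subtle circularity. Your fibration reformulation is correct: with Earle--Eells giving $\pi_1(\Diff^+(S_g))=0$ for $g\ge 2$, the long exact sequence reduces injectivity of $\D_g\to\G_g$ to $\pi_1(\mathcal H,\sigma)=0$. But now look at the \emph{other} fibration you implicitly invoke, $\mathrm{SDiff}^+\cap\Diff_0^+\to\Diff_0^+\to\mathcal H_\sigma$ (the $\Diff_0^+$-orbit map onto the component of $\sigma$): since $\Diff_0^+$ is contractible, its long exact sequence gives $\pi_1(\mathcal H_\sigma)\cong\pi_0(\mathrm{SDiff}^+\cap\Diff_0^+)$. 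Thus ``$\mathcal H_\sigma$ is simply connected'' is \emph{equivalent} to ``every symmetric diffeomorphism isotopic to the identity is symmetrically isotopic to the identity,'' which is exactly the Birman--Hilden property you are trying to prove. The fibration language has reformulated the problem, not reduced it.

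The actual content must therefore lie in your phrase ``via its relation to the fixed locus of $[\sigma]$ in $\X_g$.'' One can make this work: the assignment $X\mapsto\sigma_X$ sending a point of $\X_g^\sigma$ to the unique isometric involution realizing $[\sigma]$ defines a map $\X_g^\sigma\to\mathcal H_\sigma$ whose fiber over $\sigma'$ is the Teichm\"uller space of $\sigma'$-invariant hyperbolic structures (equivalently, of the quotient orbifold), hence contractible. If you verify this map is a fibration, contractibility of $\X_g^\sigma$ gives the result. But this verification is the crux, and your sketch does not supply it; nor does the alternative ``canonical straightening'' paragraph, which is too vague to distinguish from a restatement of the goal. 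The original Birman--Hilden argument is in fact rather different and more combinatorial, working directly with isotopies of curves and arcs on the surface relative to the branched covering.
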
 
When $g=1, 2$, $\D_g = \G_g$. 
Define $\Delta_{g,n}$ as the fiber product of $\Delta_g$ and $\G_{g,n}$ over $\G_g$:
$$\Delta_g\times_{\G_g}\G_{g,n},$$
where the map $\G_{g,n}\to \G_g$ is the projection obtained by forgetting the $n$ marked points. 
\subsection{Symplectic representation of the mapping class group} For a commutative ring \( R \), let \( H_R \) denote the first homology group \( H_1(S_g; R) \) of \( S_g \). When $R=\Q$, we simply write $H$ for $H_\Q$ in this paper.  This group \( H_R \) is a free \( R \)-module of rank \( 2g \). The algebraic intersection pairing \( \langle \cdot, \cdot \rangle \) on \( H_R \) is a nondegenerate alternating form. Define \( \Sp(H_R) \) as the automorphism group \( \Aut(H_R, \langle \cdot, \cdot \rangle) \) of \( H_R \) that preserves the pairing \( \langle \cdot, \cdot \rangle \).

The mapping class group \( \Gamma_{g,n} \) acts on $H_\Z$, preserving the pairing \( \langle \cdot, \cdot \rangle \). Thus, there exists a representation
\[
\rho: \Gamma_{g,n} \to \Sp(H_{\mathbb{Z}}).
\]
It is well known that the representation $\rho$ is surjective (see \cite[Thm.~6.4]{farb_marg}). 

\subsection{Level structure}
For each integer $m\ge 0$, the principal congruence subgroup $\Sp(H_\Z)[m]$ of $\Sp(H_\Z)$ of level $m$ is the kernel of the reduction mod $m$ mapping:
$$
\Sp(H_\Z)[m] = \ker\{\Sp(H_\Z) \to \Sp(H_{\Z/m})\}.
$$
Define the level $m$ subgroup of $\G_{g,n}$ to be
the kernel of the reduction of $\rho$ mod $m$:
$$
\G_{g,n}[m] = \ker\{\G_{g,n} \to \Sp(H_{\Z/m})\}.
$$
The Torelli group $T_{g,n}$ is the level $0$ subgroup of $\G_{g,n}$:
$$
T_{g,n} = \G_{g,n}[0] = \ker\{\G_{g,n} \to \Sp(H_{\Z})\}.
$$
Since $\Sp(H_\Z)[m]$ is torsion free for all $m\ge 3$, it follows that $\G_{g,n}[m]$
is torsion free for all $m\ge 3$.
For $g \geq 1$, $T_{g,n}$ are torsion free (e.g. \cite[Thm.~6.12]{farb_marg}, and for $g\geq 3$, $T_g$ is finitely generated (e.g. see \cite{john}). Mess showed in \cite{mess} that $T_2$ is a countably generated free group. 

\subsection{Hyperelliptic Torelli group} The hyperelliptic Torelli group, denoted by $T\Delta_g$, is defined as the intersection of $\D_g$ and $T_g$:
$$
T\Delta_g:= \D_g\cap T_g.
$$
We have $T\Delta_2 = T_2$, and it is not known for $g\geq 3$, whether $T\Delta_g$ is finitely generated or not.

\section{The universal curves and Torelli space}

Suppose that $2g-2+n >0$. By a complex curve or simply a curve, we mean a Riemann surface in this paper. Let $\X_{g,n}$ be the Teichmüller space of marked, $n$-pointed, compact curves of genus $g$. As a set $\X_{g,n}$ is given by
$$
\left\{
\parbox{2.15in}{orientation preserving diffeomorphisms $f: S_{g,n} \to C$
to a curve}
\right\}
\bigg/
\left\{\parbox{2.15in}{isotopies constant on the $n$ distinct points of $S_{g,n}$}\right\}.
$$
It is a contractible complex manifold of dimension $3g-3+n$. There is an action of $\G_{g,n}$ on $\X_{g,n}$ as a group of biholomorphisms:
$$
\phi: f\mapsto f\circ \phi^{-1},\,\,\,\, \phi \in \G_{g,n},\,\,f\in \X_{g,n}.
$$
It is well known that this action is properly discontinuous  and virtually free. The isotropy group of the class $[S_{g,n}\to (C;x_1, \ldots, x_n)]$ is isomorphic to the automorphism group of $C$ fixing $\{x_1, \ldots, x_n\}$ pointwise. 

Suppose that $m$ is a non-negative integer. A {\it level $m$ structure} on a
curve $C$ is a symplectic basis (with respect to the
intersection pairing) of $H_1(C;\Z/m)$. The moduli space of $n$-pointed smooth
projective curves $g$ with a level $m$ structure is the quotient of
$\X_{g,n}$ by the level $m$ subgroup of $\G_{g,n}$:
$$
\M^\mathrm{an}_{g,n}[m] = \big(\G_{g,n}[m]\bs \X_{g,n}\big)^\orb.
$$
 When $m=1$, $\M_{g,n}^\an:=\M_{g,n}^\an[1]$ is the moduli space of $n$-pointed curves of genus $g$. 
It will be regarded as a complex analytic orbifold.  It is a model of the
classifying space of $\G_{g,n}[m]$ and there is a unique conjugacy class of isomorphisms between its orbifold fundamental group and  $\G_{g,n}[m]$:
$$
\pi_1^\orb(\M_{g,n}^\an[m])\cong \G_{g,n}[m].
$$ 
When the group $\G_{g,n}[m]$ is torsion
free, $\M_{g,n}^\an[m]$ is a smooth complex variety and also a fine moduli space for
$n$-pointed smooth projective curves of genus $g$ with a level $m$ structure.
This occurs, for example, when $m\ge 3$, $m=0$, or when $n>2g+2$.

Note that $\M_{g,n}^\an[0]$ is the quotient of
the Teichm\"uller space by the Torelli group $T_{g,n}$. It is known as {\it Torelli
space}, denoted by $\T_{g,n}$, and the fundamental group is isomorphic to $T_{g,n}$.
\subsection{Moduli space of hyperelliptic curves}
Suppose that $g\ge 2$. We continue the notation of the previous sections.
Denote by $\Y_g$ the set of points of
$\X_g$ fixed by $\sigma$:
$$
\Y_g := \X_g^\sigma.
$$
This is the set of points $[f : S_g \to C]$ of $X_g$ for which $f\sigma f^{-1}
\in \Aut C$. Thus, $\Y_g$ is the set of isotopy classes of framed complex
structures $f : S_g \to C$ on $S_g$ where the isotopy class of $\sigma$ in $\G_g$
contains an automorphism of $C$. It is biholomorphic to the Teichm\"uller space
$\X_{0,2g+2}$, and therefore connected and contractible.

Since there is exactly one conjugacy class of hyperelliptic involutions in
$\G_g$, the locus of hyperelliptic curves in $\X_g$ is
\begin{equation}
\label{union}
\X_g^\hyp = \bigcup_{\phi \in \G_g/(\G_g)_{\Y_g}} \phi(\Y_g).
\end{equation}
The stabilizer $(\G_g)_{\Y_g}$ of $\Y_g$ is easily seen to be $\D_g$. 
Define
$$
\H_g^\an = \big(\D_g \bs \Y_g\big)^\orb.
$$
Furthermore, define
$$
\H_{g,n}^\an = \H_g^\an\times_{\M_{g}^\an}\M_{g,n}^\an.
$$
This will be regarded as an analytic orbifold. Since $\Y_g$ is contractible,
$\H_g^\an$ is a model of the classifying space of $\D_g$, and there is a unique conjugacy class of isomorphisms between its orbifold fundamental group  and  $\Delta_g$:
$$
\pi_1^\orb(\H_g^\an)\cong \Delta_g.
$$ 
Similarly, there is a unique isomorphism, up to conjugation,
$$
\pi_1^\orb(\H_{g,n}^\an)\cong \Delta_{g,n}.
$$
\subsection{Universal curves} Associated to the orbifold $\M^\an_{g,n}[m]$, there exists a universal curve
$$
\pi \colon \cC_{g,n}^\an[m] \to \M^\an_{g,n}[m],
$$
where $\mathcal{C}^\an_{g,n}[m]$ is the universal family of curves over $\mathcal{M}^\an_{g,n}[m]$. Let $C$ be a curve of genus $g$. The fiber of $\pi$ over a point $[C] \in \mathcal{M}^\an_{g,n}[m]$ is isomorphic to $C$, and there is a homotopy exact sequence of fundamental groups:
\[
1 \to \pi_1(C, x) \to \pi_1^{\mathrm{orb}}(\mathcal{C}^\an_{g,n}[m], x) \to \pi_1^{\mathrm{orb}}(\mathcal{M}^\an_{g,n}[m], [C]) \to 1,
\]
where $x$ is a point in $C$. Note that $\pi_1(C, x)\cong \pi_1(S_g)$.

\subsection{Moduli stack of curves}
Denote by $\M_{g,n/\C}[m]$ the moduli stack of smooth, projective $n$-pointed curves of genus $g$ with level $m$ structure over $\C$. For the definition of stacks, see \cite{DM}. For $m \geq 3$, $\M_{g,n/\C}[m]$ is a smooth projective variety over $\C$, and its underlying orbifold is $\M_{g,n}^\an[m]$.

There exists a universal curve
\[
\pi \colon \cC_{g,n/\C}[m] \to \M_{g,n/\C}[m]
\]
with the following universal property: if $f \colon X \to B$ is a proper, smooth family of genus $g$ curves with level $m$ structure and $n$ disjoint sections, then there is a unique morphism $\phi \colon B \to \M_{g,n/\C}[m]$ such that $f$ is isomorphic to the pullback family $\phi^*(\pi)$.

The algebraic fundamental groups of $\M_{g,n/\C}[m]$ and $\cC_{g,n/\C}[m]$ are denoted by $\pi_1^\alg(\M_{g,n/\C}[m])$ and $\pi_1^\alg(\cC_{g,n/\C}[m])$, respectively.
There exists a unique isomorphism, up to conjugacy:
\[
\pi_1^\alg(\M_{g,n/\C}[m]) \cong \widehat{\G_{g,n}[m]},
\]
where $\widehat{\G_{g,n}[m]}$ denotes the profinite completion of the mapping class group $\Gamma_{g,n}[m]$.

Let $[C]$ be a point of $\M_{g,n/\C}[m]$. The universal curve $\pi$ induces an exact sequence of algebraic fundamental groups:  
\[
1 \to \pi_1^\alg(C, x) \to \pi_1^\alg(\cC_{g,n/\C}[m],x) \to \pi_1^\alg(\M_{g,n/\C}[m],[C]) \to 1,
\]
where $x$ is a point in $C$. It is a basic fact that $\pi_1^\alg(C, x)$ is isomorphic to the profinite completion of $\pi_1(S_g)$.

\subsection{Moduli stack of hyperelliptic curves}
Let $\H_{g/\C}$ denote the moduli stack of smooth, projective hyperelliptic curves of genus $g$ over $\C$. There exists a closed immersion 
\[
i:\H_{g/\C} \hookrightarrow \M_{g/\C},
\]
allowing us to view $\H_{g/\C}$ as a closed substack of $\mathcal{M}_{g/\C}$. Define $\H_{g,n/\C}$ as the fiber product of $\H_{g/\C}$ and $\mathcal{M}_{g,n/\C}$ over $\mathcal{M}_{g/\C}$:
\[
\H_{g,n/\C} := \H_{g/\C} \times_{\M_{g/\C}} \M_{g,n/\C}.
\]
The induced closed immersion $\H_{g,n/\C}\hookrightarrow \M_{g,n/\C}$ will be also denoted by $i$. 
The universal hyperelliptic curve 
\[
\pi^{\mathrm{hyp}} : \cC_{\H_{g,n/\C}} \to \H_{g,n/\C}
\]
is defined as the pullback of the universal curve 
$\pi$ to $\H_{g,n/\C}$: 
$$
\xymatrix@R=1em@C=2em{\cC_{\H_{g,n/\C}}\ar[r]\ar[d]_{\pi^\hyp}&\cC_{g,n/\C}\ar[d]^\pi\\
\H_{g,n/\C}\ar[r]^-i&\M_{g,n/\C}.
}
$$
There exists, up to conjugation, a unique isomorphism between $\pi_1^\alg(\H_{g,n/\C})$ and the profinite completion of $\Delta_{g,n}$:
$$
\pi_1^\alg(\H_{g,n/\C})\cong \widehat{\Delta_{g,n}}.
$$ 
\section{The Malcev completion of groups}

\subsection{Malcev completion and Malcev Lie algebra}
Suppose that a discrete group $\Gamma$ is finitely generated, its \textit{Malcev completion} (i.e. \textit{unipotent completion}) is a pro-unipotent group $\Gamma^\un_{/\Q}$ over $\Q$ equipped with a Zariski dense homomorphism $\phi:\Gamma\to\Gamma^\un(\Q)$. It is equivalent to considering its pro-nilpotent Lie algebra $\Lie\Gamma^\un$, which is called the \textit{Malcev Lie algebra associated to $\Gamma$}. This is a special case of our main tool, relative (unipotent) completion, which will be explained in the next section. The standard approach to Malcev completion below follows from \cite[Appendix A]{quillen}. Unless emphasizing, we usually omit the subscript $_{/\Q}$ when it is defined over $\Q$.

The group algebra $\Q\Gamma$ over $\Q$ is naturally a Hopf algebra with coproduct, antipode and augmentation given by
$$\Delta:g\mapsto g\otimes g,\quad i:g\mapsto g^{-1},\quad \epsilon:g\mapsto 1.$$
Note that it is cocommutative but not necessarily commutative, and thus does not correspond to the coordinate ring of a pro-algebraic group. It is natural to consider its (continuous) dual, which is commutative. We define the unipotent completion $\Gamma^{\un}_{/\Q}$ of $\Gamma$, a pro-algebraic group over $\Q$, by its coordinate ring
$$\O(\Gamma^{\un}_{/\Q})=\Hom_{\cts}(\Q\Gamma,\Q):=\varinjlim_n\Hom(\Q\Gamma/I^n,\Q),$$
where we give $\Q\Gamma$ a topology by powers of its augmentation ideal $I:=\ker\epsilon$. 

One can also consider the $I$-adic completion
$$\Q\Gamma^{\wedge}:=\varprojlim_n\Q\Gamma/I^n$$
of $\Q\Gamma$, which is a complete Hopf algebra with (completed) coproduct
$$\Delta:\Q\Gamma^{\wedge}\to\Q\Gamma^{\wedge}\hat{\otimes}\Q\Gamma^{\wedge}$$
induced by the coproduct of $\Q\Gamma$. Denote the completion of $I$ by $I^{\wedge}$. The group of $\Q$-points of the unipotent completion $\Gamma^\un$ is the set of group-like elements
$$\Gamma^\un(\Q)=\{u\in\Q\Gamma^{\wedge} \big| \Delta u=u\otimes u\}\subseteq 1+I^{\wedge}.$$
Its Lie algebra is the set
$$\Lie\Gamma^\un:=\{v\in\Q\Gamma^{\wedge} \big| \Delta v=v\otimes1+1\otimes v\}\subseteq I^{\wedge}.$$
The natural map $\Gamma\to \Q\Gamma^{\wedge}$ induces a natural homomorphism $\Gamma\to\Gamma^\un(\Q)$. 

The exponential and logarithm maps
\begin{center}
    \begin{tikzcd}
    I^{\wedge} \ar[r, shift left, "\exp"] & 1+I^{\wedge} \ar[l, shift left, "\log"]
    \end{tikzcd}
\end{center}
are mutually inverse bijections that restrict to the following bijection
\begin{center}
    \begin{tikzcd}
    \Lie\Gamma^\un \ar[r, shift left, "\exp"] & \Gamma^\un(\Q). \ar[l, shift left, "\log"]
    \end{tikzcd}
\end{center}
This is a group isomorphism if the multiplication of the Lie algebra is given by the Baker--Campbell--Hausdorff (BCH) formula.

\begin{example}[The unipotent completion of Torelli groups]
    The above construction applies to Torelli groups $T_{g,n}$. Denote the unipotent completion of $T_{g,n}$ by $T^\un_{g,n}$, and its Lie algebra by $\t_{g,n}$. The structural presentation of $\t_{g,n}$ will be discussed in \S6.
\end{example}

\begin{example}[The unipotent completion of a fundamental group]
    Take the fundamental group $\pi:=\pi_1(S_g,x)$ of a topological surface $S_g$ of genus $g$ with base point $x\in S_g$. Denote the unipotent completion of $\pi$ by $\cP$, and its Lie algebra by $\p$. The structural presentation of $\p$ will be discussed in \S6. 
    
    When the surface $S_g$ has a complex structure, it becomes a Riemann surface, i.e. a complex curve $C$. Its fundamental group is isomorphic to $\pi$. By abuse of notation, denote by $\cP$ and $\p$ the unipotent completion of the fundamental group $\pi_1(C,x)$ and its Lie algebra, respectively. There is a canonical mixed Hodge structure on $\p$ dependent on the base point $x$, see \S6. 
\end{example}

\subsection{Continuous unipotent completion} Suppose that $k$ is a topological field with characteristic zero. The \textit{continuous unipotent completion} of a topological group $\Gamma$ is analogous to the unipotent completion of a discrete group, with the additional condition that each homomorphism involved being continuous. It is a pro-unipotent group $\Gamma^\un_{/k}$ defined over $k$, equipped with a \textit{continuous} homomorphism $\phi:\Gamma\to\Gamma^\un(k)$. It has the universal property that every \textit{continuous} homomorphism $\Gamma\to U(k)$ with unipotent group $U$ uniquely factors through $\phi$. All of these \textit{continuous} conditions are given by the naturally induced topology on the involved groups from the given topological structures of $\Gamma$ and $k$.

Fix a prime number $\ell$. Take $k=\Q_\ell$. The following result is contained in \cite[Thm. A.6]{hain-matsumoto}.
\begin{theorem}\label{ctsuc}
    If $\Gamma$ is a finitely generated discrete group with profinite completion $\widehat{\Gamma}$, then $$\Gamma^\un_{/\Q}\otimes_{\Q}\Q_{\ell}\quad\text{and}\quad\widehat{\Gamma}^\un_{/\Q_\ell}$$
    are canonically isomorphic as pro-unipotent groups over $\Q_\ell$.
\end{theorem}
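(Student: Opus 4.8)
Here is a proposed approach.

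\medskip

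\noindent\textbf{Proof proposal.}
The plan is to recognise both pro-unipotent groups as corepresenting the same functor. First I would record the base-change input: since $\Gamma$ is finitely generated, $I/I^2\cong H_1(\Gamma;\Q)$ is finite dimensional, so each $\Q\Gamma/I^n$ is finite dimensional over $\Q$ and hence $\Q_\ell\Gamma^\wedge=(\Q\Gamma^\wedge)\otimes_\Q\Q_\ell$; consequently $\Gamma^\un_{/\Q}\otimes_\Q\Q_\ell$ is the unipotent completion of the discrete group $\Gamma$ over $\Q_\ell$, and it corepresents the functor $U\mapsto\Hom(\Gamma,U(\Q_\ell))$ on pro-unipotent groups $U$ over $\Q_\ell$. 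On the other hand $\widehat{\Gamma}^\un_{/\Q_\ell}$ corepresents $U\mapsto\Hom_\cts(\widehat{\Gamma},U(\Q_\ell))$. So it suffices to prove that restriction along the canonical map $\iota\colon\Gamma\to\widehat{\Gamma}$ induces, for every pro-unipotent $\Q_\ell$-group $U$, a bijection
\[
\iota^*\colon\Hom_\cts(\widehat{\Gamma},U(\Q_\ell))\ \xrightarrow{\ \sim\ }\ \Hom(\Gamma,U(\Q_\ell)),
\]
naturally in $U$; the canonical isomorphism of the theorem then follows by Yoneda, and it is automatically compatible with the structural maps out of $\Gamma$ and $\widehat{\Gamma}$. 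Writing a pro-unipotent $U$ as an inverse limit of finite dimensional unipotent quotients (and using that continuous homomorphisms into an inverse limit are the inverse limit of the continuous homomorphisms) reduces everything to the case $\dim U<\infty$.

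Injectivity of $\iota^*$ is clear, since $\iota(\Gamma)$ is dense in $\widehat{\Gamma}$ and $U(\Q_\ell)$ is Hausdorff. For surjectivity one must extend a homomorphism $f\colon\Gamma\to U(\Q_\ell)$ to a continuous homomorphism on $\widehat{\Gamma}$, and the key geometric input is: \emph{for a unipotent group $U$ over $\Q_\ell$, the group $U(\Q_\ell)$ is a directed union of compact open subgroups, so every finitely generated subgroup of $U(\Q_\ell)$ has compact closure, which is a profinite group.} I would prove this by induction on the nilpotency class using the lower central series of $U$: fixing a $\Z_\ell$-model, the subsets obtained by requiring the $i$-th graded coordinates to lie in $\ell^{-m_i}\Z_\ell$ for a sufficiently rapidly growing sequence $m_1\le m_2\le\cdots$ are, for $m_1\gg0$, compact open subgroups, because multiplication and inversion are polynomial of bounded degree; their union is all of $U(\Q_\ell)$; a finitely generated subgroup has all of its finitely many generators inside a single such subgroup; and a compact subgroup of the totally disconnected group $U(\Q_\ell)$ is profinite.

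Granting this, the image $f(\Gamma)\subseteq U(\Q_\ell)$ is finitely generated, so its closure $K$ is a profinite group and $f$ factors as a continuous homomorphism $\Gamma\to K$. By the universal property of the profinite completion this extends uniquely to a continuous homomorphism $\widehat{\Gamma}\to K\hookrightarrow U(\Q_\ell)$ restricting to $f$, proving surjectivity; naturality in $U$ is immediate. Unwinding, the isomorphism is realised by the mutually inverse morphisms $\Phi\colon\Gamma^\un_{/\Q}\otimes\Q_\ell\to\widehat{\Gamma}^\un_{/\Q_\ell}$ induced by $\iota$ and $\Psi\colon\widehat{\Gamma}^\un_{/\Q_\ell}\to\Gamma^\un_{/\Q}\otimes\Q_\ell$ obtained by extending $\Gamma\to(\Gamma^\un_{/\Q}\otimes\Q_\ell)(\Q_\ell)$ over $\widehat{\Gamma}$ (the image is relatively compact, by the lemma applied to each finite dimensional quotient); $\Psi\Phi$ and $\Phi\Psi$ agree with the identity on the Zariski-dense subgroups coming from $\Gamma$ and $\widehat{\Gamma}$, hence are the identity.

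I expect the main obstacle to be exactly the lemma on $U(\Q_\ell)$ together with the reduction to finite dimensional $U$: one needs that the $\Q_\ell$-points of a unipotent group are exhausted by compact open subgroups, so that finitely generated --- not merely topologically finitely generated --- subgroups are relatively compact. That is precisely what lets the extension along $\Gamma\hookrightarrow\widehat{\Gamma}$ be carried out purely through the profinite completion, despite $\Q_\ell$ being very far from profinite (it is a $\Q$-vector space, hence divisible, hence has no proper finite-index subgroups).
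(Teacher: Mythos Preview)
The paper does not give its own proof of this statement; it simply cites \cite[Thm.~A.6]{hain-matsumoto}. Your proposal is correct and is essentially the argument found in that reference: the heart of the matter is the lemma that for a finite dimensional unipotent group $U$ over $\Q_\ell$, the group $U(\Q_\ell)$ is exhausted by compact open subgroups, so that any homomorphism from a finitely generated discrete group factors through a profinite subgroup and therefore extends uniquely to the profinite completion. Your Yoneda/corepresentability framing and the reduction to finite dimensional $U$ are a clean way to package this, and the sketch of the lemma via the lower central series and polynomial multiplication is the standard one.
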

\begin{remark}
    This theorem shows that the continuous unipotent completion (over $\Q_\ell$) of a topological group $\widehat{\Gamma}$ can be obtained by starting with the unipotent completion (over $\Q$) of a discrete group $\Gamma$, then base changing from $\Q$ to $\Q_\ell$. Because of this, we will not distinguish the notation for the unipotent completion and its continuous version.
\end{remark}

\begin{example}
    Take the profinite completion $\widehat{T_{g,n}}$ of $T_{g,n}$. Its continuous unipotent completion over $k=\Ql$ is $$\widehat{T_{g,n}}^\un_{/\Ql}=T^\un_{g,n/\Q}\otimes_\Q\Ql$$
    whose Lie algebra is 
    $$\t_{g,n/\Ql}=\t_{g,n}\otimes_\Q\Ql.$$
\end{example}

\begin{example}
    Take the profinite completion $\widehat{\pi_1(S_g)}$ of $\pi_1(S_g)$. Its continuous unipotent completion over $k=\Ql$ is $$\cP_{/\Ql}=\cP_{/\Q}\otimes_\Q\Ql$$
    whose Lie algebra is 
    $$\p_\Ql=\p\otimes_\Q\Ql.$$
\end{example}

\section{Relative completion}
\subsection{Relative completion of a discrete group}
This is a quick review of relative (Malcev/unipotent) completion of discrete groups. More details can be found in \cite[\S2--4]{hain_comp}, \cite[\S3--5]{hain_inf} and \cite[\S3]{hain_hodge_modular}.

Suppose that 
\begin{enumerate}
    \item $\Gamma$ is a discrete group,
    \item $R$ is a reductive group over $\Q$,
    \item $\rho:\Gamma\to R(\Q)$ is a Zariski dense homomorphism.
\end{enumerate}
The \textit{completion of $\Gamma$ relative to $\rho$} (or the \textit{relative completion of $\Gamma$}) is a pro-algebraic group $\cG_{/\Q}$ defined over $\Q$ whose $\Q$-rational points receive a homomorphism $\widetilde{\rho}:\Gamma\to\cG(\Q)$ that lifts $\rho$. The group $\cG$ is an extension 
$$1\to\U\to\cG\to R\to1$$
of $R$ by a pro-unipotent group $\U$, which is the unipotent radical of $\cG$. It has the following universal property: If 
$$1\to U\to G\to R\to 1$$
is an extension of pro-algebraic groups over $\Q$ such that $U$ is pro-unipotent, and that $\rho$ factors through a homomorphism $\phi:\Gamma\to G(\Q)$ via $\Gamma\to G(\Q)\to R(\Q)$, then there exists a unique homomorphism of pro-algebraic groups $\cG\to G$ that commutes with projections to $R$ and that $\phi$ is the composition $\Gamma\to\cG(\Q)\to G(\Q)$.

When $R$ is trivial, $\rho$ is trivial, and then $\cG=\U$ is the unipotent completion of $\Gamma$.

\begin{proposition}[Naturality {\cite[Prop. 3.5]{hain_hodge_modular}}] 
    For $j=1,2$, let $\rho_j:\Gamma_j\to R_j(\Q)$ be Zariski dense homomorphisms from discrete groups to $\Q$-points of reductive groups. Let $\cG_j$ and $\widetilde{\rho}_j:\Gamma_j\to\cG_j(\Q)$ be the completion of $\Gamma_j$ relative to $\rho_j$. If the diagram
    \begin{center}
        \begin{tikzcd}
            \Gamma_1 \ar[r, "\rho_1"] \ar[d, "\phi_\Gamma"'] & R_1(\Q) \ar[d, "\phi_R"]\\
            \Gamma_2 \ar[r, "\rho_2"] & R_2(\Q)
        \end{tikzcd}
    \end{center}
    commutes, then there is a unique homomorphism $\phi_\cG$ such that the following diagram commutes.
    \begin{center}
        \begin{tikzcd}
            \Gamma_1 \ar[r, "\widetilde{\rho}_1"] \ar[d, "\phi_\Gamma"'] \ar[rr, bend left, "\rho_1"] & \cG_1(\Q) \ar[r] \ar[d, "\phi_\cG"] & R_1(\Q) \ar[d, "\phi_R"]\\
            \Gamma_2 \ar[r, "\widetilde{\rho}_2"] \ar[rr, bend right, "\rho_2"'] & \cG_2(\Q) \ar[r] & R_2(\Q)
        \end{tikzcd}
    \end{center}
\end{proposition}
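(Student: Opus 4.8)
The plan is to deduce the existence of $\phi_\cG$ directly from the universal property of relative completion stated above, applied to $\cG_1$ together with a pro-algebraic group manufactured from $\cG_2$. Regard $\phi_R$ as a morphism of reductive $\Q$-groups $R_1\to R_2$, as in the conventions of \cite{hain_hodge_modular}, and form the fiber product $G:=R_1\times_{R_2}\cG_2$. Pulling back the extension $1\to\U_2\to\cG_2\to R_2\to1$ (where $\U_2$ is the unipotent radical of $\cG_2$) along $R_1\to R_2$ presents $G$ as an extension $1\to\U_2\to G\to R_1\to1$ of pro-algebraic groups whose kernel $\U_2$ is pro-unipotent, so $G$ is an admissible target for the universal property of $\cG_1$.

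Next I would produce a homomorphism $\phi\colon\Gamma_1\to G(\Q)$ lifting $\rho_1$. The two homomorphisms $\rho_1\colon\Gamma_1\to R_1(\Q)$ and $\widetilde{\rho}_2\circ\phi_\Gamma\colon\Gamma_1\to\cG_2(\Q)$ become equal after projecting to $R_2(\Q)$: indeed $\cG_2\to R_2$ post-composed with $\widetilde{\rho}_2$ is $\rho_2$, so the composite equals $\rho_2\circ\phi_\Gamma=\phi_R\circ\rho_1$ by the hypothesis that the square commutes. Hence they glue to $\phi\colon\Gamma_1\to G(\Q)$, and by construction $\Gamma_1\xrightarrow{\phi}G(\Q)\to R_1(\Q)$ is $\rho_1$. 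The universal property of $\widetilde{\rho}_1\colon\Gamma_1\to\cG_1(\Q)$ then yields a unique morphism $\widetilde{\phi}\colon\cG_1\to G$ commuting with the projections to $R_1$ and inducing $\phi$ on $\Gamma_1$. I set $\phi_\cG$ to be the composite $\cG_1\xrightarrow{\widetilde{\phi}}G\to\cG_2$. Commutativity of the large diagram is then a chase: projecting $\phi$ to $\cG_2(\Q)$ gives $\widetilde{\rho}_2\circ\phi_\Gamma$, whence $\phi_\cG\circ\widetilde{\rho}_1=\widetilde{\rho}_2\circ\phi_\Gamma$; and since $G\to\cG_2\to R_2$ agrees with $G\to R_1\xrightarrow{\phi_R}R_2$ by the definition of the fiber product, while $\cG_1\to G\to R_1$ is the structure map of $\cG_1$, the morphism $\phi_\cG$ covers $\phi_R$.

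For uniqueness, given any morphism $\psi\colon\cG_1\to\cG_2$ of pro-algebraic groups covering $\phi_R$ and satisfying $\psi\circ\widetilde{\rho}_1=\widetilde{\rho}_2\circ\phi_\Gamma$, the pair consisting of the structure map $\cG_1\to R_1$ and $\psi$ factors through a morphism $\psi'\colon\cG_1\to G$ commuting with the projections to $R_1$, and one checks that $\Gamma_1\to\cG_1(\Q)\xrightarrow{\psi'}G(\Q)$ equals $\phi$; by the uniqueness clause in the universal property of $\cG_1$ this forces $\psi'=\widetilde{\phi}$, hence $\psi=\phi_\cG$. The argument is essentially formal, so I do not anticipate a genuine obstacle; the only points needing care are that $G$ is truly a pro-algebraic extension of $R_1$ by a pro-unipotent group — immediate, since fiber products of pro-algebraic groups are pro-algebraic and $\ker(G\to R_1)=\U_2$ — and that $\phi_R$ is a morphism of algebraic groups so that the fiber product $R_1\times_{R_2}\cG_2$ is defined, which is part of the standing hypotheses in the source.
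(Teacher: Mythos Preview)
Your argument is correct and is exactly the standard fiber-product reduction to the universal property; there is nothing to add. Note that the paper does not give its own proof of this proposition but simply cites it from \cite[Prop.~3.5]{hain_hodge_modular}, so there is no in-paper proof to compare against; your write-up is essentially the argument one finds in that reference.
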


A generalization of Levi's Theorem ensures that the extension for relative completion 
$$1\to\U\to\cG\to R\to1$$
splits and that any two such splittings are conjugate by an element of $\U$. Every splitting induces an isomorphism
$$\cG\cong R\ltimes\U$$
that commutes with the projections to $R$, where the action of $R$ on $\U$ is determined by the splitting. This action is determined by the action of $R$ on the Lie algebra $\u$ of $\U$ as $\U=\exp\u$. To give a presentation of $\cG$, it suffices to give a presentation of $\u$ in the category of $R$-modules. 

By standard arguments, $\u$ has a minimal presentation of the form
$$\u\cong\L(H_1(\u))^\wedge/(\im\varphi)\quad\text{with}\quad\im\varphi\cong H_2(\u)$$
in the category of $R$-modules, where $\L(V)$ denotes the free Lie algebra generated by the vector space $V$, and $\varphi:H_2(\u)\to[\L(H_1(\u))^\wedge,\L(H_1(\u))^\wedge]$ is an injection such that the composite
$$H_2(\u)\to[\L(H_1(\u))^\wedge,\L(H_1(\u))^\wedge]\to\Lambda^2H_1(\u)$$
is the dual to the cup product $\Lambda^2H^1(\u)\to H^2(\u)$.

\begin{theorem}[{\cite[\S3.2]{hain_hodge_modular}}] 
    For every finite dimensional $R$-module, there is a natural homomorphism 
    $$\Hom_R(H_k(\u),V)\cong(H^k(\u)\otimes V)^R\to H^k(\Gamma,V)$$
    that is an isomorphism when $k=1$ and injective when $k=2$. If every irreducible finite dimensional representation of $R$ is absolutely irreducible, then there is a natural $R$-module isomorphism
    $$H^1(\u)\cong\bigoplus_\alpha H^1(\Gamma,V_\alpha)\otimes V_\alpha^*$$
    and a natural $R$-module injection
    $$H^2(\u)\hookrightarrow\bigoplus_\alpha H^1(\Gamma,V_\alpha)\otimes V_\alpha^*$$
    where $\{V_\alpha\}$ is a set of representatives of the isomorphism classes of irreducible finite dimensional $R$-modules, and $V_\alpha^*$ is the dual of $V_\alpha$.
\end{theorem}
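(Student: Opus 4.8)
The plan is to route the asserted map through the rational (Hochschild) cohomology of the proalgebraic group $\cG$ and then to exploit reductivity of $R$. Fix a finite-dimensional $R$-module $V$; regard it as a $\cG$-module via $\cG\to R$ and as a $\Gamma$-module via $\rho$, so that the prounipotent radical $\U$ acts trivially on it. The identification $\Hom_R(H_k(\u),V)\cong(H^k(\u)\otimes V)^R$ is formal, coming from the duality $H^k(\u)\cong H_k(\u)^{*}$ (Lie algebra (co)homology with trivial coefficients) together with $(\Hom(H_k(\u),V))^R=\Hom_R(H_k(\u),V)$; the real content is the natural homomorphism to $H^k(\Gamma,V)$.

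The key step, which I expect to be the main obstacle, is the \emph{cohomology comparison}: for every such $V$, the map $\widetilde{\rho}^{*}\colon H^k(\cG,V)\to H^k(\Gamma,V)$ induced by $\widetilde{\rho}\colon\Gamma\to\cG(\Q)$ is an isomorphism for $k=1$ and injective for $k=2$. For $k=1$, one identifies $H^1(-,V)$ with the set of $V$-conjugacy classes of homomorphisms into $V\rtimes R$ lifting the projection to $R$: a $1$-cocycle $c$ on $\Gamma$ gives the lift $\gamma\mapsto(c(\gamma),\rho(\gamma))$, coboundaries give $V$-conjugate lifts, and likewise for $\cG$. The universal property of $\cG$ furnishes a bijection between homomorphisms $\cG\to V\rtimes R$ over $R$ and homomorphisms $\Gamma\to V\rtimes R$ over $\rho$ — send $\psi$ to $\psi\circ\widetilde{\rho}$, with inverse the unique lift provided by the universal property, these being mutually inverse by the uniqueness clause — and it is compatible with $V$-conjugation, giving the isomorphism for $k=1$. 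For $k=2$, let $1\to V\to\cE\to\cG\to1$ be an extension of proalgebraic groups whose pullback along $\widetilde{\rho}$ splits over $\Gamma$. As $\cE$ is again an extension of $R$ by a prounipotent group, the splitting $\Gamma\to\cE(\Q)$ is a lift of $\rho$, so the universal property yields $s\colon\cG\to\cE$ over $R$; the composite $\cG\xrightarrow{s}\cE\to\cG$ is a morphism over $R$ restricting to $\widetilde{\rho}$ on $\Gamma$, hence is the identity by uniqueness, so $s$ splits $\cE$ and its class in $H^2(\cG,V)$ vanishes. The delicate point here is translating the vanishing of a cohomology class into an honest splitting of proalgebraic groups so that the uniqueness clause can be applied.

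Next I would compute $H^{k}(\cG,V)$ from the Hochschild--Serre spectral sequence of $1\to\U\to\cG\to R\to1$,
$$
E_2^{p,q}=H^p\!\big(R,\,H^q(\u)\otimes V\big)\ \Longrightarrow\ H^{p+q}(\cG,V),
$$
using that the continuous cohomology of the prounipotent group $\U$ with coefficients on which it acts trivially is the Lie algebra cohomology, $H^q(\U,V)\cong H^q(\u)\otimes V$ as $R$-modules. Since $R$ is reductive in characteristic zero, $H^{p}(R,-)$ vanishes for $p>0$ on rational representations, so the spectral sequence collapses and $H^{k}(\cG,V)\cong\big(H^k(\u)\otimes V\big)^R$. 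Combining this with the comparison and the formal identifications above produces the natural homomorphism $\Hom_R(H_k(\u),V)\to H^k(\Gamma,V)$, an isomorphism for $k=1$ and injective for $k=2$; naturality in $V$ holds since every step is functorial in $V$.

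Finally, assume every irreducible $R$-module is absolutely irreducible, so $\operatorname{End}_R(V_\alpha)=\Q$ and $\Hom_R(V_\alpha,V_\beta)=0$ for $\alpha\neq\beta$. Decompose $H_1(\u)$ into $R$-isotypic components, $H_1(\u)\cong\bigoplus_\alpha M_\alpha\otimes V_\alpha$ with multiplicity spaces $M_\alpha:=\Hom_R(V_\alpha,H_1(\u))$ over $\Q$; then $\Hom_R(H_1(\u),V_\beta)\cong M_\beta^{*}$, which by the case $k=1$ is $H^1(\Gamma,V_\beta)$, so $H_1(\u)\cong\bigoplus_\beta H^1(\Gamma,V_\beta)^{*}\otimes V_\beta$ and dually $H^1(\u)\cong\bigoplus_\alpha H^1(\Gamma,V_\alpha)\otimes V_\alpha^{*}$. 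Running the same computation for $k=2$, the injection $\Hom_R(H_2(\u),V_\beta)\hookrightarrow H^2(\Gamma,V_\beta)$ exhibits the multiplicity space of $V_\beta$ in $H_2(\u)$ inside $H^2(\Gamma,V_\beta)$, and dualizing gives the natural injection $H^2(\u)\hookrightarrow\bigoplus_\alpha H^2(\Gamma,V_\alpha)\otimes V_\alpha^{*}$. Once the comparison is in place, the remaining steps are formal, resting only on reductivity of $R$, the Hochschild--Serre spectral sequence, and Schur's lemma.
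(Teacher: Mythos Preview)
The paper does not give its own proof of this statement; it is quoted from \cite[\S3.2]{hain_hodge_modular}. Your argument is correct and is precisely the standard one from Hain's paper: compare $H^k(\Gamma,V)$ with $H^k(\cG,V)$ via the universal property (crossed homomorphisms for $k=1$, extensions for $k=2$), collapse the Hochschild--Serre spectral sequence of $1\to\U\to\cG\to R\to1$ using reductivity of $R$ to identify $H^k(\cG,V)\cong(H^k(\u)\otimes V)^R$, and then read off the isotypic decomposition of $H^k(\u)$ by Schur's lemma. Note that the displayed injection in the statement should read $H^2(\u)\hookrightarrow\bigoplus_\alpha H^2(\Gamma,V_\alpha)\otimes V_\alpha^{*}$ (a typo in the paper); you prove this correct version.
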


Our primary example is when the discrete group $\Gamma$ is the mapping class group $\Gamma_{g,n}$. Note that in this case, its unipotent completion $\Gamma_{g,n}^\un$ is trivial. So it is natural to consider relative completion.
\begin{example}[Relative completion of mapping class groups]\label{eg}
\phantom{}
\begin{enumerate}
    \item 
    Let $\Gamma=\Gamma_{g,n}$, $R=\Sp(H)$, and  $\rho:\Gamma_{g,n}\to\Sp(H)$. Since the image of $\rho$ is $\Sp(H_\Z)$, it is Zariski dense. Denote the completion of $\Gamma_{g,n}$ with respect to $\rho$, i.e. the relative completion of $\Gamma_{g,n}$, by $\cG_{g,n}$ and its pro-unipotent radical by $\U_{g,n}$. Denote the Lie algebras of $\cG_{g,n}$, $\U_{g,n}$ and $\Sp(H)$ by $\g_{g,n}$, $\u_{g,n}$ and $\sp(H)$, respectively. Just like for groups, there is an exact sequence for Lie algebras
    $$0\to\u_{g,n}\to\g_{g,n}\to\sp(H)\to0.$$
    To give a presentation of $\g_{g,n}$, it suffices to give a presentation of $\u_{g,n}$ in the category of $\sp(H)$-modules. The structural presentation of $\u_{g,n}$ will be discussed in \S6.
    \item Hyperelliptic case:
    Let $\Gamma=\Delta_{g,n}$, $R=\Sp(H)$, and  $\rho:\Delta_{g,n}\to\Sp(H)$. Denote the completion of $\Delta_{g,n}$ with respect to $\rho$, i.e. the relative completion of $\Delta_{g,n}$, by $\cD_{g,n}$ and its pro-unipotent radical by $\cV_{g,n}$. Denote the Lie algebras of $\cD_{g,n}$ and $\cV_{g,n}$ by $\d_{g,n}$ and $\v_{g,n}$, respectively. Similarly to the previous case, there is an exact sequence of Lie algebras
    $$0\to\v_{g,n}\to\d_{g,n}\to\sp(H)\to0.$$
    The Lie algebra $\v_{g,n}$ will be discussed in more detail in \S7.
\end{enumerate}
\end{example}

\begin{remark}\label{cd}
    The naturality of relative completion implies that the diagram 
    \begin{center}
    \begin{tikzcd}
        1 \ar[r] & \U_{g,n} \ar[r] & \cG_{g,n} \ar[r] & \Sp(H) \ar[r] \ar[d, equal] & 1 \\
        1 \ar[r] & \cV_{g,n} \ar[r] \ar[u] & \cD_{g,n} \ar[r] \ar[u] & \Sp(H) \ar[r] & 1 \\
    \end{tikzcd}
    \end{center}
    commutes with exact rows. This induces an analogous commutative diagram for Lie algebras as follows.
    \begin{center}
    \begin{tikzcd}
        0 \ar[r] & \u_{g,n} \ar[r] & \g_{g,n} \ar[r] & \sp(H) \ar[r] \ar[d, equal] & 0 \\
        0 \ar[r] & \v_{g,n} \ar[r] \ar[u] & \d_{g,n} \ar[r] \ar[u] & \sp(H) \ar[r] & 0 \\
    \end{tikzcd}
    \end{center}
\end{remark}

\begin{remark}\label{bc}
    Relative completion is compatible with base change. Suppose that $k$ is a field of characteristic 0, so it is a field extension of $\Q$, then $\rho_k:\Gamma\to R(k)$ is Zariski dense in the $k$-group $R_{/k}=R_{/\Q}\otimes_{\Q}k$. When $k=\Q_\ell$, we will often denote $\rho_k$ by $\rho_\ell$. One can define the completion $\cG_{/k}$ of $\Gamma$ relative to $\rho_k$. There is a natural isomorphism
    $$\cG_{/k}\cong\cG_{/\Q}\otimes_{\Q}k$$
    between the $k$-group and the base change of $\Q$-group, see \cite[Cor. 4.14]{hain_comp}, \cite[\S 3.3]{hain_hodge_modular}.  
\end{remark}

\begin{example}
    Let $\Gamma$ be a finite index subgroup of $\Gamma_{g,n}$ containing the Johnson subgroup $K_{g,n}$ \cite{hain_g3}. It is generated by Dehn twists on bounding simple closed curves. The completion of $\Gamma$ relative to the standard representation $\rho$ is isomorphic to $\cG_{g,n}$ by \cite[Cor.~6.7]{hain_g3}. When $n=0$, the Johnson subgroup $K_g$ is the kernel of the Johnson homomorphism $\tau_g:T_g\to \Lambda^3H_\Z/H_\Z$. All the abelian levels $\G_g[m]$, Prym-level mapping class groups, and fundamental groups of moduli spaces of curves with an arbitrary root of their canonical bundle are given as such examples for $\Gamma$ in \cite{hain_g3}.
\end{example}

\subsection{Continuous relative completion of a profinite group} This is analogous to the continuous unipotent completion situation above. Every homomorphism appearing in the construction of relative completion previously is now required to be continuous with respect to the given topology. Suppose that 
\begin{enumerate}
    \item $\widehat\Gamma$ is a profinite group,
    \item $R$ is a reductive group over $\Q_\ell$,
    \item $\rho_\ell:\widehat\Gamma\to R(\Q_\ell)$ is a \textit{continuous} Zariski dense homomorphism.
\end{enumerate}
The \textit{continuous completion of $\widehat\Gamma$ relative to $\rho_\ell$} (or the \textit{continuous relative completion of $\widehat\Gamma$}) is a pro-algebraic group $\cG_{/\Q_\ell}$, defined over $\Q_\ell$, whose $\Q_\ell$-points receive a \textit{continuous} homomorphism $\widetilde{\rho_\ell}:\widehat\Gamma\to\cG(\Q_\ell)$ that lifts $\rho_\ell$. The group $\cG$ is an extension 
$$1\to\U\to\cG\to R\to1$$
of $R$ by a pro-unipotent group $\U$, which is the unipotent radical of $\cG$. It has the following universal property: If 
$$1\to U\to G\to R\to 1$$
is an extension of pro-algebraic groups over $\Q_\ell$ such that $U$ is pro-unipotent, and that $\rho_\ell$ factors through a \textit{continuous} homomorphism $\phi_\ell:\widehat\Gamma\to G(\Q_\ell)$ via $\widehat\Gamma\to G(\Q_\ell)\to R(\Q_\ell)$, then there exists a unique homomorphism of pro-algebraic groups $\cG\to G$ that commutes with projections to $R$ and that $\phi_\ell$ is the composition $\widehat\Gamma\to\cG(\Q_\ell)\to G(\Q_\ell)$.

When $R$ is trivial, $\rho_\ell$ is trivial, then $\cG_{/\Q_\ell}=\U_{/\Q_\ell}$ is the continuous unipotent completion of $\widehat\Gamma$.

Suppose that $\widehat\Gamma$ comes from a discrete group, i.e. it is the profinite completion of a discrete group $\Gamma$. Suppose that $\rho:\Gamma\to R(\Q_\ell)$ is a continuous, Zariski dense homomorphism, so that $\rho_\ell:\widehat\Gamma\to R(\Q_\ell)$ is the continuous extension of $\rho$. The following result is stated in \cite[Thm 6.3]{hain_rational}.

\begin{theorem}\label{cont rel comp iso}
    Suppose $\rho$ and $\rho_\ell$ are given as above. If $\cG_{/\Q_\ell}$ and $\tilde\rho:\Gamma\to\cG(\Q_\ell)$ is the completion of $\Gamma$ relative to $\rho$, then:
    \begin{enumerate}
        \item $\tilde\rho$ is continuous and thus induces a continuous homomorphism $\widetilde{\rho_\ell}:\widehat\Gamma\to\cG(\Q_\ell)$;
        \item $\cG_{/\Q_\ell}$ and $\widetilde{\rho_\ell}$ form the continuous relative completion of $\widehat\Gamma$ with respect to $\rho_\ell$.
    \end{enumerate}
\end{theorem}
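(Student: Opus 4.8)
The plan is to establish (1) first---that is the real content---and then derive (2) by a formal manipulation with universal properties.

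For (1), I would begin by reducing to finite-dimensional quotients. Write $\cG = \varprojlim_\alpha \cG_\alpha$ as a cofiltered inverse limit of linear algebraic groups over $\Q_\ell$, each fitting into an extension $1 \to U_\alpha \to \cG_\alpha \to R_\alpha \to 1$ with $U_\alpha$ finite-dimensional unipotent and $R_\alpha$ a reductive quotient of $R$; since the image of the pro-unipotent radical $\U$ in $\cG_\alpha$ lies in $U_\alpha$, the composite $\Gamma \to \cG_\alpha(\Q_\ell) \to R_\alpha(\Q_\ell)$ factors through $\rho$, and $\cG(\Q_\ell) = \varprojlim_\alpha \cG_\alpha(\Q_\ell)$ as topological groups. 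It is then enough to show that each $\tilde\rho_\alpha : \Gamma \to \cG_\alpha(\Q_\ell)$ has image with compact (hence profinite) closure $Q_\alpha := \overline{\tilde\rho_\alpha(\Gamma)}$: granting that, the universal property of profinite completion extends $\tilde\rho_\alpha$ uniquely to a continuous homomorphism $\widehat\Gamma \to Q_\alpha$, these extensions are compatible as $\alpha$ varies (the transition maps carry $Q_\alpha$ onto $Q_\beta$, and uniqueness forces compatibility), and their inverse limit gives a continuous homomorphism $\widehat\Gamma \to \varprojlim_\alpha Q_\alpha \subseteq \cG(\Q_\ell)$ extending $\tilde\rho$.

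The heart of the argument is thus the claim that $\tilde\rho_\alpha(\Gamma)$ is bounded. Since $\rho$ is continuous and $\widehat\Gamma$ is compact, the image of $\Gamma$ in $R_\alpha(\Q_\ell)$ is contained in a compact open subgroup $K$. Fix generators $\gamma_1, \dots, \gamma_r$ of $\Gamma$ and, choosing a Levi splitting $\cG_\alpha(\Q_\ell) = U_\alpha(\Q_\ell) \rtimes R_\alpha(\Q_\ell)$, write $\tilde\rho_\alpha(\gamma_i) = (u_i, k_i)$ with $u_i \in U_\alpha(\Q_\ell)$ and $k_i \in K$. Because $K$ is compact and $\Lie U_\alpha$ is nilpotent, there is a $K$-stable $\Z_\ell$-lattice $M$ in $\Lie U_\alpha$ containing $\log u_1, \dots, \log u_r$ and closed under the Baker--Campbell--Hausdorff operation; then $\exp M$ is a compact $K$-stable subgroup of $U_\alpha(\Q_\ell)$, and $(\exp M) \rtimes K$ is a compact subgroup of $\cG_\alpha(\Q_\ell)$ containing every $\tilde\rho_\alpha(\gamma_i)$, hence containing $\tilde\rho_\alpha(\Gamma)$. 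I expect this boundedness step to be the main obstacle, in the sense that it is the only place where the hypotheses are genuinely used: continuity of $\rho$ confines the reductive direction, finite generation of $\Gamma$ confines the unipotent direction, and the conclusion is false if either is dropped.

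Finally, for (2) granting (1): by the definition of relative completion (and Remark \ref{bc} if one wishes to descend to $\Q$), the pair $(\cG_{/\Q_\ell}, \tilde\rho)$ is the completion of $\Gamma$ relative to $\rho$, and by (1) the induced map $\widetilde{\rho_\ell} : \widehat\Gamma \to \cG_{/\Q_\ell}(\Q_\ell)$ is a continuous lift of $\rho_\ell$; since $\cG_{/\Q_\ell}$ is an extension of $R$ by a pro-unipotent group, the pair $(\cG_{/\Q_\ell}, \widetilde{\rho_\ell})$ is a candidate for the continuous relative completion of $\widehat\Gamma$. To verify its universal property, let $1 \to U \to G \to R \to 1$ be an extension of pro-algebraic $\Q_\ell$-groups with $U$ pro-unipotent and $\phi_\ell : \widehat\Gamma \to G(\Q_\ell)$ a continuous lift of $\rho_\ell$. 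Restricting along $\Gamma \to \widehat\Gamma$ gives a lift $\phi : \Gamma \to G(\Q_\ell)$ of $\rho$, so the universal property of $\cG_{/\Q_\ell}$ as a relative completion of $\Gamma$ produces a unique morphism $\Psi : \cG_{/\Q_\ell} \to G$ over $R$ with $\phi = \Psi_{\Q_\ell} \circ \tilde\rho$. Then $\Psi_{\Q_\ell} \circ \widetilde{\rho_\ell}$ and $\phi_\ell$ are continuous homomorphisms $\widehat\Gamma \to G(\Q_\ell)$ that agree on the dense subgroup $\Gamma$, hence coincide because $G(\Q_\ell)$ is Hausdorff; and any morphism $\cG_{/\Q_\ell} \to G$ over $R$ whose composite with $\widetilde{\rho_\ell}$ is $\phi_\ell$ must agree with $\Psi$ on $\widetilde{\rho_\ell}(\widehat\Gamma) \supseteq \tilde\rho(\Gamma)$, which is Zariski dense in $\cG$, so $\Psi$ is unique. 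This is exactly the universal property characterizing the continuous relative completion of $\widehat\Gamma$ with respect to $\rho_\ell$, completing the proof.
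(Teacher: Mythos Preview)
The paper does not actually prove this theorem; it simply attributes the result to \cite[Thm.~6.3]{hain_rational} and states it without argument. So there is no ``paper's own proof'' to compare against. That said, your sketch is essentially the standard argument one finds in Hain's work (and in \cite{hain-matsumoto} for the unipotent case), and it is correct in outline: reduce to finite-dimensional quotients, show each $\tilde\rho_\alpha(\Gamma)$ lands in a compact subgroup, extend to $\widehat\Gamma$ by the universal property of profinite completion, and then verify the universal property of continuous relative completion by restricting to the dense subgroup $\Gamma$.

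One technical point deserves more care. You assert the existence of a $K$-stable $\Z_\ell$-lattice $M\subset\Lie U_\alpha$ containing the $\log u_i$ and closed under BCH. For primes $\ell$ larger than the nilpotency class this is immediate (the BCH denominators are $\ell$-adic units, so any $K$-stable $\Z_\ell$-Lie subalgebra works after scaling), but for small $\ell$ the BCH denominators are not units and a single scaling does not suffice. The cleanest fix is to argue instead that $U_\alpha(\Q_\ell)$ is an increasing union of compact open subgroups (induct on the lower central series, using that the last nontrivial term is central), and then observe that for any compact open $C\subset U_\alpha(\Q_\ell)$ the subgroup generated by the compact $K$-orbit $\bigcup_{k\in K}kCk^{-1}$ lies in some larger compact open subgroup, whose closure is then a $K$-stable compact subgroup. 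This yields the required $K$-stable compact subgroup containing the $u_i$ without wrestling with BCH denominators. With this adjustment your argument for (1) goes through, and your argument for (2) is a clean and correct unwinding of universal properties.
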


\begin{remark}
    This is a generalization of Thm \ref{ctsuc}. In particular, combined with Remark \ref{bc}, the continuous completion of $\widehat{\Gamma_{g,n}[m]}$ with respect to the standard $\rho_\ell:\widehat{\Gamma_{g,n}[m]}\to\Sp(H_{\Q_\ell})$ is $\cG_{g,n}\otimes_{\Q}\Q_\ell$, a $\Q_\ell$-group base changed from a $\Q$-group.
\end{remark}

\section{The Lie algebras $\t_{g,n}$ and $\u_{g,n}$} 

\subsection{Non-trivial central extension}
Recall that the Torelli group $T_{g,n}$ is the kernel of the standard homomorphism $\rho:\Gamma_{g,n}\to\Sp(H)$. The relative completion $\cG_{g,n}$ of $\Gamma_{g,n}$ has unipotent radical $\U_{g,n}$. It is equipped with a homomorphism $\widetilde{\rho}:\Gamma_{g,n}\to\cG_{g,n}(\Q)$ that lifts $\rho$. The restriction of $\widetilde{\rho}$ to $T_{g,n}$ thus has image inside $\U_{g,n}(\Q)$. Since $\U_{g,n}$ is pro-unipotent, it factors through
$$T_{g,n}\to T^\un_{g,n}(\Q)\to\U_{g,n}(\Q).$$
The following result is stated in \cite[Thm. 3.4]{hain_inf}. It is first proved for $g\ge8$ in \cite{hain_comp}, but the argument works for $g\ge3$ in view of \cite[Thm. 3.2]{hain_inf}. The sketch of proof is summarized from \cite[Rmk. on pf. of Thm. 5.7]{hain_johnson}.
\begin{theorem}
    If $g\ge3$, the homomorphism $T^\un_{g,n}\to\U_{g,n}$ is surjective. Its kernel is a copy of the additive group $\bG_a$ contained in the center of $T^\un_{g,n}$. There is a non-trivial central extension 
    $$0\to\Q\to\t_{g,n}\to\u_{g,n}\to0$$
    of pro-nilpotent Lie algebras.
\end{theorem}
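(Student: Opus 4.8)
The plan is to identify $\t_{g,n}\to\u_{g,n}$ with the comparison map between the minimal presentation of the Malcev Lie algebra of $T_{g,n}$ and that of the unipotent radical of the relative completion of $\Gamma_{g,n}$, and to locate the discrepancy between the two presentations with Johnson's computation of $H_1(T_{g,n};\Q)$ together with Hain's $\Sp(H)$-decomposition of the low-weight cohomology of $\Gamma_{g,n}$. The map itself is built by naturality: $\widetilde\rho\colon\Gamma_{g,n}\to\cG_{g,n}(\Q)$ carries $T_{g,n}=\ker\rho$ into $\U_{g,n}(\Q)$, and since $\U_{g,n}$ is pro-unipotent the universal property of the Malcev completion gives a canonical morphism of pro-unipotent groups $\phi\colon T^\un_{g,n}\to\U_{g,n}$, equivalently a morphism of pro-nilpotent Lie algebras $\t_{g,n}\to\u_{g,n}$.

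Surjectivity of $\phi$ belongs to the structure theory of relative completion: $\U_{g,n}$ is topologically generated by the image of $T_{g,n}=\ker\rho$, because $\cG_{g,n}$ is topologically generated by the image of $\Gamma_{g,n}$ and, for $g\ge3$, $\Sp(H_\Z)$ admits no nontrivial split pro-unipotent extension ($H^1(\Sp(H_\Z);V)=0$ for every finite-dimensional $\Sp(H)$-module $V$). In particular $H_1(\phi)\colon H_1(T_{g,n};\Q)\to H_1(\u_{g,n})$ is surjective; and by Hain's computation of $\Gr^W_{-1}\u_{g,n}$ through the presentation theorem for relative completion, $H_1(\u_{g,n})\cong H_1(T_{g,n};\Q)$ with $H_1(\phi)$ identified in weight $-1$ with the Johnson homomorphism, which is an isomorphism for $g\ge3$. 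Hence $H_1(\phi)$ is an \emph{isomorphism}.

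Now the kernel $\mathfrak k=\ker(\t_{g,n}\to\u_{g,n})$. Since $H_1(\phi)$ is an isomorphism the two Lie algebras have the same minimal generators and $\mathfrak k\subseteq[\t_{g,n},\t_{g,n}]$; the five-term exact sequence in Lie algebra homology then identifies $\mathfrak k/[\t_{g,n},\mathfrak k]$ with $\operatorname{coker}\!\big(H_2(\t_{g,n})\to H_2(\u_{g,n})\big)$, so $\u_{g,n}$ is $\t_{g,n}$ divided by the ideal generated by its ``extra relations''. Comparing presentations --- the relators of $\t_{g,n}$ come from $H_2(T_{g,n};\Q)$ and those of $\u_{g,n}$ from the $H^2$-part $\bigoplus_\alpha H^2(\Gamma_{g,n},V_\alpha)\otimes V_\alpha^*$ of the relative-completion presentation theorem --- and feeding in Hain's computation of these as $\Sp(H)$-modules in low weight for $g\ge3$ (\cite[Thm.~3.2]{hain_inf}; for $g\ge8$ the cruder bounds of \cite{hain_comp} already suffice), one finds that the relator module of $\u_{g,n}$ exceeds that of $\t_{g,n}$ by exactly one copy of the trivial $\Sp(H)$-module, placed in weight $-2$. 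Thus $\mathfrak k$ is the ideal generated by a single element $w$ spanning a weight-$(-2)$ trivial subrepresentation. Using the same low-weight computation, $\Gr^W_{-3}\t_{g,n}\to\Gr^W_{-3}\u_{g,n}$ is injective, so $\mathfrak k$ is concentrated in weight $-2$ and therefore one-dimensional, $\mathfrak k=\Q w$; and since $\mathfrak k$ is pure of weight $-2$ inside a Lie algebra whose weights are $\le-1$, the ideal property forces $[\t_{g,n},\mathfrak k]\subseteq\mathfrak k\cap\Gr^W_{\le-3}\t_{g,n}=0$, i.e. $\mathfrak k$ is central. Exponentiating yields $\ker\phi\cong\bG_a$ inside the center of $T^\un_{g,n}$, and the central extension $0\to\Q\to\t_{g,n}\to\u_{g,n}\to0$.

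Non-triviality is then immediate: were the extension split, centrality would force $\t_{g,n}\cong\u_{g,n}\times\Q$ as Lie algebras and hence $H_1(\t_{g,n})\cong H_1(\u_{g,n})\oplus\Q$, contradicting the isomorphism $H_1(\phi)$ of the second step (equivalently, $H_1(T_{g,n};\Q)$ has no trivial $\Sp(H)$-summand for $g\ge3$); the same reasoning shows $0\to\bG_a\to T^\un_{g,n}\to\U_{g,n}\to0$ does not split. I expect the main obstacle to be the third paragraph: showing that the relator module of $\u_{g,n}$ beats that of $\t_{g,n}$ by precisely one trivial representation in weight $-2$, and that $\Gr^W_{-3}\t_{g,n}$ injects into $\Gr^W_{-3}\u_{g,n}$. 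This rests on having the $\Sp(H)$-isotypic decompositions of $H_1$ and $H_2$ of the Torelli group (equivalently of $H^1$ and $H^2$ of $\Gamma_{g,n}$ in small coefficients) sharply enough to reach $g=3$ --- exactly the refinement supplied by \cite[Thm.~3.2]{hain_inf} beyond the range $g\ge8$ of \cite{hain_comp}.
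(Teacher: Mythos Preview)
Your approach diverges from the paper's in a substantive way, and the divergence hides a gap.

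The paper does not compare relator modules of $\t_{g,n}$ and $\u_{g,n}$ at all. Instead it uses a general structural fact about relative completion: applying the (right-exact) relative-completion functor to the columns of
\[
\begin{CD}
1 @>>> T_{g,n} @>>> \Gamma_{g,n} @>>> \Sp(H_\Z) @>>> 1\\
@. @VVV @VVV @VVV @.\\
1 @>>> 1 @>>> \Sp(H) @>>> \Sp(H) @>>> 1
\end{CD}
\]
yields an exact sequence $H_2(\Sp(H_\Z);\Q)\to T^\un_{g,n}\to\U_{g,n}\to 1$, with the left-hand term automatically landing in the center. Since $H_2(\Sp(H_\Z);\Q)\cong\Q$ for $g\ge 3$, this immediately gives that the kernel is central and at most one-dimensional. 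The remaining point---that the map $H_2(\Sp(H_\Z);\Q)\to T^\un_{g,n}$ is \emph{injective}, so the kernel is exactly $\Q$---is the genuinely hard step, and the paper appeals to Morita's computation, equivalently the non-triviality of the biextension line bundle on $\M_g$ associated to the Ceresa cycle.

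Your third paragraph tries to replace this geometric input by a direct comparison of $H_2(\t_{g,n})$ with $H_2(\u_{g,n})$ as $\Sp(H)$-modules. The problem is that the precise relator module of $\t_{g,n}$ (in particular the assertion that it differs from that of $\u_{g,n}$ by exactly one trivial summand, and that $\Gr^W_{-3}\t_{g,n}\hookrightarrow\Gr^W_{-3}\u_{g,n}$) is established in \cite{hain_inf} only \emph{after} the present theorem---indeed, the MHS on $\t_{g,n}$ and hence the weight filtration you invoke are constructed downstream of this result. So the argument as written is circular. What \cite[Thm.~3.2]{hain_inf} supplies for $g\ge 3$ is the $H^1$ input (vanishing of $H^1(\Sp(H_\Z);V)$ and Johnson's computation), not the $H_2$ comparison you need.

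Your surjectivity argument and your final non-splitting argument (if the extension split, $H_1(\t_{g,n})$ would acquire a trivial summand, contradicting Johnson) are both correct. But the crux---that the kernel is exactly $\Q$ rather than $0$---needs either the $H_2(\Sp(H_\Z);\Q)$ framework together with the Morita/Ceresa input, or an independent computation of $H_2(T_{g,n};\Q)$ that you have not supplied.
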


\begin{proof}[Proof (Sketch)]
    First, note that the completion of $\Sp(H_\Z)$ relative to its inclusion into $\Sp(H)$ is $\Sp(H)$ when $g\ge3$. Second, the relative completion is right exact. Taking relative completion of each column in the following diagram
    \begin{center}
        \begin{tikzcd}
        1 \ar[r] & T_{g,n} \ar[r] \ar[d] & \Gamma_{g,n} \ar[r] \ar[d] & \Sp(H_\Z) \ar[r] \ar[d] & 1 \\
        1 \ar[r] & 1 \ar[r] & \Sp(H) \ar[r] & \Sp(H) \ar[r] & 1
        \end{tikzcd}
    \end{center}
    we obtain an exact sequence
    $$T^\un_{g,n}\to\cG_{g,n}\to\Sp(H)\to1,$$
    which implies that $T^\un_{g,n}\to\U_{g,n}$ is surjective. The kernel of this map is central and there is an exact sequence
    $$H_2(\Sp(H_\Z);\Q)\to T^\un_{g,n}\to\U_{g,n}\to1.$$
    The injectivity on the left is equivalent to non-triviality of the biextension line bundle over $\M_g$ associated to the Ceresa cycle. This follows from a computation of Morita \cite[5.8]{morita}. By taking Lie algebras, we obtain the non-trivial central extension.
\end{proof}

\subsection{Hodge theory
}
Hodge theory and representation theory will be used to further study the Lie algebras $\t_{g,n}$ and $\u_{g,n}$. They are naturally equipped with $\Q$-mixed Hodge structures (MHS). Weight filtration is a part of the data of a MHS. By taking the associated weight graded functor on MHS, there is no loss of information because the weight filtration of MHS satisfies good exactness property. This is a powerful tool for studying structural presentations of the Lie algebras $\t_{g,n}$ and $\u_{g,n}$. Since the weight graded pieces are naturally $\Sp(H)$-modules, representation theory will be reviewed next in \S\ref{Sp reps}.

Classically, Deligne puts functorial MHS on cohomology groups of complex algebraic varieties \cite{deligne}. Later, Hain puts MHS on fundamental groups of varieties which are dependent on base points \cite{hain}. In fact, this means that MHS can be put on (coordinate rings of) unipotent completions of fundamental groups and their Lie algebras. We start with the following standard result.
\begin{theorem}
    For each choice of a base point $x$ on an algebraic curve $C$, there is a canonical MHS on the Lie algebra $\p$ of the unipotent completion $\cP$ of the fundamental group $\pi_1(C,x)$.
\end{theorem}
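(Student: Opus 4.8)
The plan is to realize the continuous dual of the completed group algebra $\Q\pi^\wedge$ as the zeroth cohomology of a bar complex built from a multiplicative mixed Hodge complex attached to $C$ and augmented at $x$, and then to transport the resulting mixed Hodge structure across Chen's $\pi_1$--de Rham isomorphism. First I would recall Chen's $\pi_1$--de Rham theorem in Hain's formulation: if $A^\bullet$ is a connected commutative differential graded $\Q$--algebra model for $C$ augmented at $x$ --- for instance Sullivan's $\Q$--polynomial de Rham algebra, or Hain's reduced bar construction on the algebraic de Rham complex --- then the reduced bar construction $B(A^\bullet)$ is a commutative Hopf algebra and there is a natural Hopf algebra isomorphism
$$H^0\big(B(A^\bullet)\big)\;\cong\;\O(\cP)\;=\;\varinjlim_n\Hom(\Q\pi/I^n,\Q).$$
Dualizing recovers $\Q\pi^\wedge$, so that $\p=\Lie\cP$ is the space of primitive elements of $\Q\pi^\wedge$, equivalently the $\Q$--dual of the indecomposables of the augmentation ideal of $H^0(B(A^\bullet))$. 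Hence it suffices to put a mixed Hodge structure on $H^0(B(A^\bullet))$ compatible with all the Hopf algebra operations.

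Next I would upgrade $A^\bullet$ to a \emph{multiplicative} mixed Hodge complex. Since $C$ is a smooth projective curve its cohomology is a polarized pure Hodge structure in each degree; using the construction of a commutative mixed Hodge complex of Sullivan-type forms computing $H^\bullet(C;\Q)$ (which underlies the mixed Hodge structures on fundamental groups of \cite{hain}, and goes back to Navarro Aznar), there is a functorial such $A^\bullet$ carrying a compatible augmentation at $x$. I would then feed this into the bar construction: the Hodge filtration $F$ on $A^\bullet$ induces a filtration on $B(A^\bullet)$, while the weight filtration on $B(A^\bullet)$ is assembled from the internal weight filtration of $A^\bullet$ together with the bar length (number of tensor factors), normalized so that a length--one bar element spanning $H^1(C)$ lies in weight one. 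The essential point to verify is that the bar differential is strict for these filtrations and that the associated spectral sequences degenerate, which runs exactly as in Hain's analysis of the bar complex of a mixed Hodge complex; this yields a mixed Hodge structure on every $H^i(B(A^\bullet))$, in particular on $H^0$.

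I would then check that $\O(\cP)$ thereby becomes a Hopf algebra object in the category of ind--mixed Hodge structures: the deconcatenation coproduct, the shuffle product, the antipode, and the augmentation on $B(A^\bullet)$ are all morphisms of mixed Hodge complexes. It follows that $\cP$ is a pro--object in affine group schemes in $\MHS$, and that its Lie algebra $\p$ --- the primitives of $\Q\pi^\wedge$, dually the indecomposables of $\O(\cP)$ --- inherits a pro--mixed Hodge structure for which the bracket is a morphism; the weight filtration on $\p$ is negative, with $\Gr^W_{-1}\p\cong H_1(C;\Q)$, reflecting that the lower central series of $\pi$ is matched by the bar length. For canonicity one notes that any two commutative DGA models, or any two multiplicative mixed Hodge complexes computing $H^\bullet(C)$ and augmented at $x$, are linked by a zigzag of quasi-isomorphisms of such, whence the induced maps on $H^0$ of the bar constructions are isomorphisms of ind--MHS; functoriality in the pointed curve $(C,x)$ is inherited from that of the mixed Hodge complex of forms and of the bar construction, the dependence on $x$ entering only through the choice of augmentation.

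The hard part will be the second step: producing a genuinely multiplicative mixed Hodge complex model of $C$ with a base-point augmentation, and verifying the strictness and $E_2$--degeneration properties that make the bar filtrations assemble into an honest mixed Hodge structure rather than a bare pair of filtrations. For a curve one can shortcut part of the statement, since $\p$ is the quotient of the completed free Lie algebra $\L(H_1(C;\Q))^\wedge$ by the single quadratic relation dual to the cup product $\Lambda^2 H^1(C)\to H^2(C)$: one may then define $W_\bullet\p$ from the lower central series (placing $H_1(C;\Q)$ in weight $-1$) and $F^\bullet$ from the Hodge filtration of $H^1(C)$, and verify the mixed Hodge structure axioms directly. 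But the assertion that these filtrations are canonical, functorial, and independent of the chosen model still rests on the bar-construction argument above.
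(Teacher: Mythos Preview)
The paper does not give a proof of this statement at all: it is quoted as a ``standard result'' and attributed to \cite{hain}, with no argument supplied. So there is no proof in the paper to compare against in detail.

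Your proposal is essentially a correct outline of Hain's original argument from \cite{hain}: construct a multiplicative mixed Hodge complex $A^\bullet$ modeling $C$ with an augmentation at $x$, apply the reduced bar construction, use strictness and degeneration to obtain an ind-MHS on $H^0(B(A^\bullet))\cong\O(\cP)$ compatible with the Hopf operations, and dualize to get the pro-MHS on $\p$. Your remarks on canonicity (via zigzags of quasi-isomorphisms of multiplicative mixed Hodge complexes) and on the curve shortcut (defining $W_\bullet$ from the lower central series of $\L(H_1(C))^\wedge/(\theta)$) are also accurate. One small point of care: for a smooth projective curve the internal weight filtration on $A^\bullet$ is pure in each degree, so the weight filtration on the bar complex is in effect governed by bar length alone; your phrasing ``assembled from the internal weight filtration together with the bar length'' is correct in the general quasi-projective setting but slightly overstates what is needed here. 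Otherwise your sketch faithfully reproduces the argument the paper is citing.
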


More generally, MHS can be put on relative completions of fundamental groups \cite{hain_hodge}. For each choice of a base point $x$ in a smooth quasi-projective complex variety $X$, denote by $\pi_1(X,x)$ the topological fundamental group. Suppose that $\V$ is a polarized variation of $\Q$-Hodge structure over $X$. Denote its fiber over $x$ by $V_x$. The Zariski closure of the image of the monodromy representation
$$\rho_x:\pi_1(X,x)\to\Aut(V_x)$$
is reductive over $\Q$ \cite{deligne}. Denote this reductive group by $R$, and we obtain a Zariski dense homomorphism $\rho:\pi_1(X,x)\to R(\Q)$.\footnote{We suppress the base point $x$ subscripts for convenience, but keep its dependence in mind.} Denote the completion of $\pi_1(X,x)$ relative to $\rho:\pi_1(X,x)\to R(\Q)$ by $\cG$, and its unipotent radical by $\U$. Denote the Lie algebras of $\cG$, $\U$, and $R$ by $\g$, $\u$, and $\r$, respectively.

\begin{theorem}[{\cite[Thm.~13.1, Cor.~13.2]{hain_hodge}}]
    For each choice of a base point $x\in X$, there are canonical MHS on the coordinate rings $\O(\cG)$ and $\O(\U)$ for which the product, coproduct and antipode are morphisms of MHS. Consequently, there are canonical MHS on the Lie algebras $\g$ and $\u$ for which the Lie brackets are morphisms of MHS. Moreover, the weight filtrations on these objects satisfy that
    $$W_{-1}\O(\cG)=0,\quad W_0\O(\cG)=\O(R)$$
    and 
    $$W_0\g=\g,\quad W_{-1}\g=\u,\quad\Gr^W_0\g\cong\r.$$
\end{theorem}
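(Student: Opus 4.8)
\emph{Proof idea.} The plan is to produce the Hodge and weight filtrations by a \emph{relative} bar construction, i.e. by Chen's theory of iterated integrals with coefficients in the flat bundle underlying $\V$. First I would fix a smooth projective compactification $\overline{X}$ of $X$ whose boundary $D=\overline{X}\setminus X$ is a normal crossing divisor, and attach to each object of the Tannakian subcategory $\langle\V\rangle^{\otimes}$ of local systems generated by $\V$ its Deligne canonical extension with logarithmic connection; write $\mathcal{E}^{\bullet}$ for the associated logarithmic de Rham complex on $\overline{X}$. From the global sections of these complexes one forms a bar-type complex $B(\mathcal{E}^{\bullet})$, with the reductive group $R$ recording the semisimple directions. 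The relative analogue of Chen's $\pi_1$-de Rham theorem --- established in Hain's earlier work on completions and iterated integrals --- then identifies $H^0 B(\mathcal{E}^{\bullet})$ with the coordinate ring $\O(\cG)$ as Hopf algebras, matching the shuffle product with the product of $\O(\cG)$ and the deconcatenation coproduct with its coproduct. Independence of the choice of $\overline{X}$, hence canonicity, follows from functoriality, since any two compactifications are dominated by a third.

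Next I would equip $B(\mathcal{E}^{\bullet})$ with filtrations. The logarithmic de Rham complex of a polarized variation of Hodge structure carries a Hodge filtration $F^{\bullet}$ and --- combining the weight of the variation with Deligne's pole-order filtration along $D$ --- a weight filtration $W_{\bullet}$, so that its hypercohomology is a mixed Hodge complex computing $H^{\bullet}(X;\mathcal{E})$ (this is the Deligne--Zucker theory of cohomology with coefficients, and it uses admissibility of $\V$ at $D$). Since the bar construction is functorial for filtered complexes, $F$ and $W$ induce filtrations on $B(\mathcal{E}^{\bullet})$, with $W$ reindexed so that the weights of the tensor factors add along the bar degree. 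The crucial --- and, I expect, hardest --- step is to verify that the resulting doubly filtered complex is, after passing to the inductive limit, again a mixed Hodge complex in Deligne's sense: one needs the strictness supplied by the two-filtrations lemma so that $H^0$ inherits an honest MHS rather than just two filtrations. This amounts to showing that the bar construction preserves $E_1$-degeneration and the mixed Hodge complex property, extending the arguments of Morgan and Hain from the unipotent (trivial coefficients) case to semisimple coefficients; admissibility of $\V$ --- quasi-unipotency of local monodromy and existence of the relative monodromy weight filtration along $D$ --- is precisely what keeps the coefficient complexes under control.

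Granting this, $\O(\cG)=H^0 B(\mathcal{E}^{\bullet})$ is a commutative Hopf algebra object in the category of (ind-)mixed Hodge structures, with product, coproduct and antipode morphisms of MHS; dualizing continuously, the pro-Lie algebra $\g=\Lie\cG$ acquires a MHS whose bracket, being dual to the reduced coproduct, is a morphism of MHS. By construction the sub-Hopf-algebra $W_0\O(\cG)$ is the image of $\O(R)$, so restriction and the quotient $\O(\U)=\Q\otimes_{\O(R)}\O(\cG)$ transport MHS to $\O(\U)$ and to $\u=\Lie\U$, compatibly with all structure maps. Finally, for the weight bounds: $\O(\cG)$ is generated as an $\O(R)$-algebra by classes arising from the groups $H^1(X;\mathcal{E})$ with $\mathcal{E}\in\langle\V\rangle^{\otimes}$, each of which underlies a polarized variation of weight $0$ and hence has $H^1$ of weights $\geq 1$ on the smooth variety $X$. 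Therefore $W_{-1}\O(\cG)=0$ and $W_0\O(\cG)=\O(R)$; dually $\g$ is concentrated in weights $\leq 0$, its weight $-1$ part is exactly the pro-unipotent radical $\u$, and $\Gr^W_0\g\cong\Lie R=\r$, consistent with $\r$ being pure of weight $0$ since $R$ is reductive.
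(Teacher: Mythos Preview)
The paper does not prove this statement: it is quoted verbatim as a result of Hain (\cite[Thm.~13.1, Cor.~13.2]{hain_hodge}) and used as a black box, so there is no in-paper argument to compare your proposal against. That said, your sketch is an accurate high-level outline of Hain's original proof in \cite{hain_hodge}: the MHS is indeed built via a relative bar construction on the logarithmic de~Rham complexes of the local systems in $\langle\V\rangle^{\otimes}$, using Deligne--Zucker to make these mixed Hodge complexes, and the hard step is exactly the one you flag---showing the bar complex is again a mixed Hodge complex so that $H^0$ carries a genuine MHS. One small point to tighten in your final paragraph: the variations $\mathcal{E}\in\langle\V\rangle^{\otimes}$ are not all of weight $0$; rather, what matters is that $\r\subset\mathrm{End}(V_x)$ is pure of weight $0$, so $\O(R)$ is pure of weight $0$, and the generators of $\O(\cG)$ over $\O(R)$ coming from length-$\geq 1$ iterated integrals pick up strictly positive weight from the bar shift combined with the $H^1$ weight bound.
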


\begin{remark}
    This generalizes naturally to the case when $X$ is an orbifold with the orbifold fundamental group $\pi^\orb_1(X,x)$.
\end{remark}

\begin{example}[MHS on relative completions] Apply the above theorem and remark to Example \ref{eg}.
    \begin{enumerate}
        \item For each choice of a base point $x\in\M_{g,n}^\an$, there is a natural isomorphism of $\Gamma_{g,n}$ with the orbifold fundamental group $\pi_1^\orb(\M^\an_{g,n},x)$. Once the base point $x$ is chosen, this isomorphism is fixed. We identify $\Gamma_{g,n}$ with $\pi_1^\orb(\M^\an_{g,n},x)$, and take its relative completion to obtain $\cG_{g,n}$. There are canonical MHS on its coordinate ring $\O(\cG_{g,n})$, and on the Lie algebras $\g_{g,n}$ and $\u_{g,n}$ with
        $$W_0\g_{g,n}=\g_{g,n},\quad W_{-1}\g_{g,n}=\u_{g,n},\quad\Gr^W_0\g_{g,n}\cong\sp(H)$$
        where $\sp(H)$ denotes the Lie algebra of $\Sp(H)$.
        \item For each choice of a base point $x\in\H^\an_{g,n}$, there is a natural isomorphism of $\Delta_{g,n}$ with the orbifold fundamental group $\pi_1^\orb(\H^\an_{g,n},x)$. Then we identify $\Delta_{g,n}$ with $\pi_1^\orb(\H^\an_{g,n},x)$, and take its relative completion $\cD_{g,n}$. There are canonical MHS on the coordinate ring $\O(\cD_{g,n})$, and on the Lie algebras $\d_{g,n}$ and $\v_{g,n}$ with 
        $$W_0\d_{g,n}=\d_{g,n},\quad W_{-1}\d_{g,n}=\v_{g,n},\quad\Gr^W_0\d_{g,n}\cong\sp(H).$$
    \end{enumerate}
\end{example}

By naturality of relative completion and functoriality of MHS, it is conceivable that natural maps between these Lie algebras with MHS, such as those appeared in Remark \ref{cd}, are morphisms of MHS.

\begin{corollary} \label{v to u mhs map}
    For each choice of a base point $x\in\H^\an_{g,n}$, the natural map $d\tilde{i}_*:\v_{g,n}\to\u_{g,n}$ is a morphism of MHS.
\end{corollary}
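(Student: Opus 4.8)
The plan is to deduce the statement from the naturality of relative completion together with the functoriality of Hain's mixed Hodge structures on relative completions of (orbifold) fundamental groups.

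\textbf{Step 1: identify the map geometrically.} Fix $x\in\H^\an_{g,n}$ and put $y:=i(x)\in\M^\an_{g,n}$. The closed immersion of analytic orbifolds $i:\H^\an_{g,n}\hookrightarrow\M^\an_{g,n}$ induces a homomorphism $\pi_1^\orb(\H^\an_{g,n},x)\to\pi_1^\orb(\M^\an_{g,n},y)$ which, under the canonical identifications $\pi_1^\orb(\H^\an_{g,n},x)\cong\Delta_{g,n}$ and $\pi_1^\orb(\M^\an_{g,n},y)\cong\Gamma_{g,n}$, is the Birman--Hilden inclusion $\Delta_{g,n}\hookrightarrow\Gamma_{g,n}$ and intertwines the symplectic representations $\rho$. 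Applying the naturality proposition (\cite[Prop.~3.5]{hain_hodge_modular}) to this commuting square with $R_1=R_2=\Sp(H)$ and $\phi_R=\id$ produces the homomorphism $\tilde i:\cD_{g,n}\to\cG_{g,n}$ compatible with $\widetilde\rho$ and with the projections to $\Sp(H)$; restricting to unipotent radicals gives $\cV_{g,n}\to\U_{g,n}$, and taking Lie algebras gives precisely the map $d\tilde i_*:\v_{g,n}\to\u_{g,n}$ of Remark \ref{cd}.

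\textbf{Step 2: invoke functoriality of the Hodge structure.} Let $\V$ be the canonical polarized $\Q$-variation of Hodge structure on $\M^\an_{g,n}$ with fiber $H_1(C;\Q)$ at $[C]$ — the VHS that defines $\cG_{g,n}$ — and $\V_{\H}$ the corresponding VHS on $\H^\an_{g,n}$ defining $\cD_{g,n}$. The fiber product square relating the universal curve $\pi$ to the universal hyperelliptic curve $\pi^\hyp$ exhibited above shows $i^*\pi\cong\pi^\hyp$, hence $i^*\V\cong\V_{\H}$ as polarized VHS, with monodromy groups both Zariski dense in the same reductive group $\Sp(H)$ and the evident compatibility at $x$ and $y=i(x)$. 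Granting that Hain's MHS on relative completions (\cite[Thm.~13.1]{hain_hodge}) is functorial for such base-point-compatible morphisms of smooth orbifolds, the induced map $\d_{g,n}\to\g_{g,n}$ is a morphism of MHS; since $\u_{g,n}=W_{-1}\g_{g,n}$ and $\v_{g,n}=W_{-1}\d_{g,n}$ as MHS, the restriction $d\tilde i_*:\v_{g,n}\to\u_{g,n}$ is again a morphism of MHS, which is the assertion.

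\textbf{Main obstacle.} The only genuinely non-formal ingredient is the functoriality of Hain's construction: a morphism $f:X\to Y$ of smooth orbifolds with $f^*\V_Y\cong\V_X$ and matching base points induces a morphism of MHS on the relative completions of $\pi_1^\orb$, compatibly with the group law (and hence with the Lie algebra bracket and the weight filtration). I expect proving this cleanly to require unwinding the construction of \cite{hain_hodge}: on the de Rham side the MHS comes from the bar/Chen iterated-integral complex of a cohomological mixed Hodge complex attached to $\V$, and pullback along $f$ yields a map of such complexes compatible with the filtrations and with the comparison to the Betti side (pullback of local systems); equivalently one can phrase this via the Tannakian description of relative completion and the functoriality of the associated fibre functors. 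This functoriality is implicit in \cite{hain_hodge} and is used for maps of this type in \cite{hain_inf} and \cite{hain_hodge_modular}. The remaining inputs — that $i$ is a morphism of smooth orbifolds, the identification $i^*\pi\cong\pi^\hyp$ (given above), and Zariski density of the image of $\Delta_{g,n}$ in $\Sp(H)$ (already assumed in forming $\cD_{g,n}$) — are routine.
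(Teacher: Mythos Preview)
Your proposal is correct and follows essentially the same route as the paper: define $d\tilde i_*$ via the closed immersion $i:\H^\an_{g,n}\hookrightarrow\M^\an_{g,n}$ and the naturality of relative completion, then invoke the functoriality of Hain's MHS on relative completions (the theorem immediately preceding this corollary) to conclude. The paper's own proof is terser---it simply says ``we only need to show that this map is naturally defined'' and then constructs it---whereas you spell out the VHS pullback $i^*\V\cong\V_\H$ and the passage through $W_{-1}$ more explicitly, but the argument is the same.
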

\begin{proof}
    We only need to show that this map is naturally defined.
    Once a base point $x\in\H^\an_{g,n}$ is chosen, the closed immersion $i:\H^\an_{g,n}\to\M^\an_{g,n}$ induces a homomorphism of fundamental groups
    $i_*:\pi^\orb_1(\H^\an_{g,n},x)\to\pi^\orb_1(\M^\an_{g,n},x).$
    Taking their relative completions with respect to $\Sp(H)$, we obtain $\tilde{i}_*:\cD_{g,n}\to\cG_{g,n}$ and, by restricting to the unipotent radicals,
    $$\tilde{i}_*:\cV_{g,n}\to\U_{g,n}.$$
    Finally, taking Lie algebras, we obtain the natural map
    $$d\tilde{i}_*:\v_{g,n}\to\u_{g,n}.$$
\end{proof}

The existence of a canonical MHS on $\t_{g,n}$ is a bit different. Nonetheless, it can be naturally lifted from the MHS on $\u_{g,n}$.
\begin{theorem}[{\cite[Thm.~4.10]{hain_inf}}]\label{t to u mor of mhs}
    For each choice of a base point $x\in\M^\an_{g,n}$, there is a canonical MHS on $\t_{g,n}$ for which the Lie bracket and the natural map $\t_{g,n}\to\u_{g,n}$ are morphisms of MHS.
\end{theorem}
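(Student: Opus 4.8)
The plan is to transport the mixed Hodge structure from $\u_{g,n}$ to $\t_{g,n}$ through the central extension
$$0\to\Q\to\t_{g,n}\to\u_{g,n}\to0$$
of pro-nilpotent Lie algebras established above. One cannot simply invoke Hain's construction of a MHS on the Lie algebra of a unipotent fundamental group, because for $g\ge3$ the Torelli space $\M^\an_{g,n}[0]$ is an infinite analytic covering of $\M^\an_{g,n}$ and is not an algebraic variety. Instead the idea is to realize the displayed sequence as an extension in the category of Lie algebras in pro-mixed Hodge structures, for which it suffices to: (i) put a canonical Hodge structure on the kernel $\Q$; (ii) show that the extension class is a Hodge class, so that the bracket on $\t_{g,n}$ together with the inclusion $\Q\hookrightarrow\t_{g,n}$ and the projection $\t_{g,n}\to\u_{g,n}$ all become morphisms of MHS; and (iii) verify that the resulting MHS is independent of the choices involved.

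For (i): by the proof of the central extension theorem, the kernel is the image of $H_2(\Sp(H_\Z);\Q)\cong\Q$, which for $g\ge3$ is spanned by the first Chern class of the Hodge line bundle $\lambda$ over $\M_g$, equivalently by the class of the biextension line bundle attached to the Ceresa cycle; the non-triviality of this class is a computation of Morita \cite[5.8]{morita}. Being the first Chern class of a line bundle, it is a Hodge class, so the kernel carries the Hodge structure $\Q(1)$, pure of weight $-2$. (That the weight is $\le-2$ also follows a priori: the extension is non-split, so the kernel lies in the commutator subalgebra $[\t_{g,n},\t_{g,n}]$, all of whose weights are $\le-2$ since those of $\u_{g,n}$ are $\le-1$; and $\Q(1)$ is the unique one-dimensional rational Hodge structure of weight $-2$.)

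For (ii): the central extension $0\to\Q(1)\to\t_{g,n}\to\u_{g,n}\to0$ is classified by a continuous Lie algebra cocycle $c\colon\Lambda^2\u_{g,n}\to\Q(1)$, equivalently by a class $e\in H^2_{\cts}(\u_{g,n})\otimes\Q(1)$. Since $\u_{g,n}$ carries a MHS for which the bracket is a morphism, its Chevalley--Eilenberg cochain complex $(\Lambda^\bullet\u_{g,n}^\vee,\,d)$ is a complex in the category of ind-mixed Hodge structures whose differential is a morphism of MHS, so $H^2_{\cts}(\u_{g,n})$ inherits a MHS (it is finite dimensional, $\u_{g,n}$ being finitely presented), and hence so does $H^2_{\cts}(\u_{g,n})\otimes\Q(1)$. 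The crux is to prove that $e$ is a Hodge class, equivalently that it is represented by a cocycle $\Lambda^2\u_{g,n}\to\Q(1)$ that is a morphism of MHS; such morphisms exist in abundance, since $\Gr^W_{-2}(\Lambda^2\u_{g,n})=\Lambda^2(\Gr^W_{-1}\u_{g,n})$ contains a copy of $\Q(1)$ arising from an invariant alternating form on the Johnson quotient $\Gr^W_{-1}\u_{g,n}$. To see that $e$ itself is Hodge one identifies it explicitly using the geometry: the non-triviality of Morita's biextension line bundle over $\M_g$ pins $e$ down, up to a nonzero scalar, as this same manifestly Hodge-theoretic class. This identification — the Hodge-theoretic interpretation of Morita's class — is the only genuinely non-formal point; everything else is a twisting-by-a-cocycle argument.

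Granting (ii), choose a cocycle $c$ representing $e$ that is a morphism of MHS, together with a vector-space splitting of $\t_{g,n}\to\u_{g,n}$, and declare the MHS on $\t_{g,n}$ to be the direct sum of those on $\u_{g,n}$ and $\Q(1)$. Because $c$ is a morphism of MHS, the bracket $\bigl[(u_1,t_1),(u_2,t_2)\bigr]=\bigl([u_1,u_2]_{\u},\,c(u_1,u_2)\bigr)$ and the maps $\Q(1)\hookrightarrow\t_{g,n}\twoheadrightarrow\u_{g,n}$ are morphisms of MHS. For (iii): any two such choices differ by a Lie algebra automorphism of $\t_{g,n}$ of the form $\id+\varphi$ built from a linear map $\u_{g,n}\to\Q(1)$; it induces the identity on $\Gr^W_\bullet\t_{g,n}$ and, by strictness of morphisms of MHS, is an isomorphism between the two resulting mixed Hodge structures, so the MHS on $\t_{g,n}$ is canonical. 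The construction is visibly functorial, so the same argument makes the square relating $\t_{g,1},\t_g,\u_{g,1},\u_g$ a square of morphisms of MHS, as needed in the sequel.
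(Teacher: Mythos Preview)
The paper does not prove this statement; it is quoted from \cite[Thm.~4.10]{hain_inf} without argument, so there is no ``paper's own proof'' to compare against. Your strategy of lifting the MHS through the central extension $0\to\Q\to\t_{g,n}\to\u_{g,n}\to0$ is indeed close in spirit to Hain's original approach, and steps (i) and the formal part of (ii) are fine.

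However, there are two genuine gaps. First, in (ii) you assert that Morita's computation ``pins $e$ down, up to a nonzero scalar, as this same manifestly Hodge-theoretic class,'' but you have not argued that the relevant piece of $H^2_{\cts}(\u_{g,n})(1)$ in which $e$ sits is one-dimensional, nor have you otherwise identified $e$ with the biextension class beyond noting both are nonzero. Non-triviality alone does not force two classes to be proportional. This is exactly the step you flag as ``non-formal,'' and it needs an actual argument (in Hain's treatment it goes through the comparison with $H^2(\Gamma_{g,n};\Q)$ and Borel's computation).

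Second, and more seriously, your canonicity argument (iii) is incorrect. You claim that the automorphism $\id+\varphi$, which is the identity on $\Gr^W_\bullet\t_{g,n}$, is an isomorphism of the two MHS ``by strictness.'' Strictness says that a morphism of MHS is strictly compatible with the filtrations; it does \emph{not} say that a filtration-preserving linear map inducing the identity on $\Gr^W$ is automatically a morphism of MHS. Concretely, take $\Q(0)\oplus\Q(1)$ and the map $(a,b)\mapsto(a,b+a)$: it is the identity on $\Gr^W$ but does not preserve the Hodge filtration. In your situation $\varphi\colon\u_{g,n}\to\Q(1)$ is an arbitrary linear functional with $d\varphi=c-c'$, and there is no reason it should be a morphism of MHS; a priori the Hodge filtration you build depends on the choice of cocycle and splitting. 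To repair this you would need either to show $\Hom_{\MHS}(\u_{g,n},\Q(1))=0$ so that Hodge cocycles representing $e$ are unique (this fails for special base points), or to produce the MHS by an intrinsic construction rather than by choosing a splitting---which is what Hain actually does.
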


\subsection{\textcolor{black}{$\Sp(H)$ representations}}\label{Sp reps} 
Now we review the representation theory of $\Sp(H)$.

Denote the standard representation by $H$. Denote the Lie algebra of $\Sp(H)$ by $\sp(H)$. Every irreducible representation of $\Sp(H)$ is a subrepresentation of some tensor power $H^{\otimes m}$ of $H$. Fix a symplectic basis $a_1,\cdots, a_g, b_1, \cdots, b_g$ of $H$. Set
$$\theta:=a_1\wedge b_1+\cdots+a_g\wedge b_g\in\Lambda^2 H.$$
With respect to this chosen basis, denote by $\h$ the torus in $\sp(H)$ consisting of diagonal matrices with coordinates $t=(t_1,\cdots, t_g)$ with
$$t\cdot a_i=t_ia_i\quad\text{and}\quad t\cdot b_i=-t_ib_i.$$ 
This is a Cartan subalgebra. In turn, a Borel subalgebra can be chosen so that a fundamental set of weights $\{\lambda_j:1\le j\le g\}$ can be defined by
$$\lambda_j(t)=t_1+\cdots+t_j.$$
Denote by $V(\lambda)$ the irreducible representation having a highest weight $\lambda$ that corresponds to a positive integral linear combination $\sum_{j=1}^gn_j\lambda_j$. In particular when $k\ge2$, the $k$-th fundamental representation is
$$V(\lambda_k)=\Lambda^kH/(\theta\wedge\Lambda^{k-2}H)=:\Lambda^k_0H.$$

Computing the weight graded pieces of the Lie algebras $\p$, $\t_{g,n}$ and $\u_{g,n}$, as it turns out, is essentially equivalent to computing the corresponding graded pieces for the lower central series of these Lie algebras. 

\begin{lemma}[{\cite[Lem.~4.7]{hain_inf}}]\label{W=L}
    Suppose that $\g$ is a pronilpotent Lie algebra in the category of mixed Hodge structures with finite dimensional $H_1$. If the induced MHS on $H_1(\g)$ is pure of weight $-1$, then $W_{-l}\g$ is the $l$-th term $L^l\g$ of the lower central series of $\g$.
\end{lemma}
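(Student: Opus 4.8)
The plan is to prove that the lower central series $L^\bullet\g$ is, after the obvious reindexing, the weight filtration of $\g$. Concretely, I would show that $L^\bullet\g$ is a filtration of $\g$ by sub-mixed-Hodge-structures whose $l$-th graded quotient $\Gr^L_l\g := L^l\g/L^{l+1}\g$ is pure of weight $-l$, and then invoke the uniqueness of the weight filtration of a mixed Hodge structure to conclude $W_{-l}\g = L^l\g$.

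First I would check that each term $L^l\g$ is a sub-MHS of $\g$. Since $\g$ is a Lie algebra in the category of mixed Hodge structures, the bracket $\g\otimes\g\to\g$ is a morphism of MHS, so $L^2\g=[\g,\g]$ is the image of a morphism of MHS and hence a (strict) sub-MHS; inductively, $L^{l+1}\g=[\g,L^l\g]$ is the image of the morphism of MHS $\g\otimes L^l\g\to\g$, and so is again a sub-MHS. Consequently each $\Gr^L_l\g$ carries a MHS, and the induced bracket on the graded Lie algebra $\bigoplus_l\Gr^L_l\g$ is a morphism of MHS.

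The key step is then to pin down the weights of $\Gr^L_l\g$. By the standard fact that the associated graded of the lower central series is generated in degree one, the iterated bracket $H_1(\g)^{\otimes l}=(\Gr^L_1\g)^{\otimes l}\to\Gr^L_l\g$ is surjective, and by the previous step it is a morphism of MHS. Since $H_1(\g)$ is assumed pure of weight $-1$, the tensor power $H_1(\g)^{\otimes l}$ is pure of weight $-l$, so its MHS quotient $\Gr^L_l\g$ is pure of weight $-l$ as well. (Because $H_1(\g)$ is finite-dimensional, each $\g/L^N\g$ is a finite-dimensional MHS, so this is a statement about honest mixed Hodge structures; the pro-nilpotent $\g$ is handled by passing to the inverse limit over $N$.) Finally, on $\g/L^N\g$ the filtration $L^1\g/L^N\g\supseteq L^2\g/L^N\g\supseteq\cdots\supseteq 0$ is a finite filtration by sub-MHS whose $l$-th graded quotient is pure of weight $-l$; by the uniqueness of the weight filtration — a short downward induction on $l$ using that the weight filtration of a sub-MHS is the induced one and that $\Gr^W$ is exact — such a filtration must be the weight filtration, i.e. $W_{-l}(\g/L^N\g)=L^l\g/L^N\g$ for all $l\le N$, and taking the limit over $N$ yields $W_{-l}\g=L^l\g$.

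I expect the main obstacle to be the purity assertion for $\Gr^L_l\g$: it forces one to combine the purely Lie-theoretic generation-in-degree-one property of the lower central series with the weight-$(-1)$ hypothesis on $H_1(\g)$, the point being that the former dominates $\Gr^L_l\g$ by the pure Hodge structure $H_1(\g)^{\otimes l}$ through a morphism of MHS. Everything else is formal; one need only be mildly careful with the pro-nilpotent bookkeeping and with the implicit uses of strictness of morphisms of mixed Hodge structures when forming associated gradeds (and it is worth noting that the single inclusion $L^l\g\subseteq W_{-l}\g$ is easier still, since the bracket being a morphism of MHS already makes the reindexed $W$ a central Lie filtration of $\g$).
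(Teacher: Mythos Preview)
The paper does not supply its own proof of this lemma; it simply cites \cite[Lem.~4.7]{hain_inf}. Your argument is correct and is essentially the standard proof (and the one Hain gives): each $L^l\g$ is a sub-MHS by induction using that the bracket is a morphism of MHS, the surjection $H_1(\g)^{\otimes l}\twoheadrightarrow\Gr^L_l\g$ forces $\Gr^L_l\g$ to be pure of weight $-l$, and uniqueness of the weight filtration finishes. Your handling of the pro-nilpotent case via the finite-dimensional quotients $\g/L^N\g$ is also the usual way to make the argument rigorous.
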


\begin{proposition}[{\cite[Prop.~8.4]{hain_inf}}]\label{sp str of p}
 The weight filtration of $\p$ satisfies that
 $$\p=W_{-1}\p$$
 and the $l$-th term of the lower central series of $\p$ is
 $$L^l\p=W_{-l}\p.$$ 
 Suppose that $g\geq 3$. There is a natural $\Sp(H)$-equivariant Lie algebra isomorphism
    $$\Gr^W_\bullet\p\cong\L(H)/(\theta).$$
    The highest weight decomposition of the first few weight graded pieces of $\p$ are:
    $$\Gr^W_{-m}\p\cong\begin{cases}
        V(\lambda_1) & m=1; \\
        V(\lambda_2) & m=2; \\
        V(\lambda_1+\lambda_2) & m=3; \\
        V(2\lambda_1+\lambda_2)\oplus V(2\lambda_1)\oplus V(\lambda_1+\lambda_3) & m=4. 
    \end{cases}$$
\end{proposition}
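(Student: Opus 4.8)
The plan is to reduce the whole statement to a computation with the associated graded Lie algebra $\Gr^L_\bullet\p$ and then carry out a low-degree calculation in the representation theory of $\Sp(H)$.

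\emph{Step 1 (weight filtration $=$ lower central series).} Fix a base point on a smooth projective curve $C$ diffeomorphic to $S_g$ and use the canonical mixed Hodge structure on $\p=\Lie\cP$ from \S6. Its abelianization is $H_1(\p)=H_1(C;\Q)=H$, which is pure of weight $-1$ because $H^1(C;\Q)$ is pure of weight $1$ for a smooth projective curve. Lemma~\ref{W=L} then applies verbatim and gives $W_{-l}\p=L^l\p$ for every $l\ge1$. In particular $\p=W_{-1}\p=L^1\p$, the identity $L^l\p=W_{-l}\p$ holds, and $\Gr^W_\bullet\p=\Gr^L_\bullet\p$; so it remains to prove the last two assertions for the lower-central-series graded Lie algebra.

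\emph{Step 2 (identification with $\L(H)/(\theta)$).} I would use the minimal presentation recalled in \S5 with trivial reductive quotient: $\p\cong\L(H)^\wedge/(\im\varphi)$, where $\im\varphi\cong H_2(\p)$ and the composite $H_2(\p)\to[\L(H)^\wedge,\L(H)^\wedge]\to\Lambda^2H$ is dual to the cup product $\Lambda^2H^1(\p)\to H^2(\p)$. Now $H^1(\p)=H^1(C;\Q)$, while $H^2(\p)\hookrightarrow H^2(C;\Q)\cong\Q$; since the cup product $\Lambda^2H^1(C;\Q)\to H^2(C;\Q)$ is already surjective (it is the nondegenerate intersection pairing), we get $H^2(\p)\cong\Q$ and $\im\varphi$ is the line spanned by the dual copairing $\theta=\sum_i a_i\wedge b_i\in\Lambda^2H$. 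Passing to the associated graded yields the $\Q$-linear isomorphism $\Gr^W_\bullet\p=\Gr^L_\bullet\p\cong\L(H)/(\theta)$ (this also follows from Labute's theorem on the one-relator group $\pi_1(C)$, whose relator $\prod_i[a_i,b_i]$ has nonzero leading term $\theta$). For equivariance, note that $\Gr^L_\bullet\p$ carries a natural $\Out(\pi_1(C))$-action — inner automorphisms act trivially on any lower-central-series graded — hence a $\G_g$-action; since $\Gr^L_\bullet\p$ is generated in degree $1$, where the action is $\rho$, the $\G_g$-action factors through the Zariski-dense subgroup $\Sp(H_\Z)\subset\Sp(H)$, and because $\theta$ is $\Sp(H)$-invariant and the action is algebraic, the isomorphism is $\Sp(H)$-equivariant.

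\emph{Step 3 (the first graded pieces).} Under the identification of Step~2, $\Gr^W_{-m}\p$ is the degree-$m$ part of $\L(H)/(\theta)$, so I would decompose it degree by degree. Recall the $\mathrm{GL}$-equivariant description of the free Lie algebra: $\L(H)_1=H$, $\L(H)_2=\Lambda^2H$, $\L(H)_3=\SS_{(2,1)}(H)$, $\L(H)_4=\SS_{(3,1)}(H)\oplus\SS_{(2,1,1)}(H)$. Restrict each Schur functor to $\Sp(H)$ by Littlewood's restriction rule (legitimate for $g\ge3$, which covers all partitions of at most three rows appearing here), and subtract the degree-$m$ part of the ideal $(\theta)$. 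For $m=1$ this gives $V(\lambda_1)$; for $m=2$, $\Lambda^2H/\langle\theta\rangle=\Lambda^2_0H=V(\lambda_2)$; for $m=3$, the ideal contributes $[\theta,H]\cong H=V(\lambda_1)$ (the map $h\mapsto[\theta,h]$ is injective in a free Lie algebra) while $\SS_{(2,1)}(H)|_{\Sp}=V(\lambda_1+\lambda_2)\oplus V(\lambda_1)$, giving $V(\lambda_1+\lambda_2)$; for $m=4$, the degree-$4$ part of $(\theta)$ is $[\theta,\Lambda^2H]+[[\theta,H],H]$, which a Jacobi-identity bookkeeping identifies with $V(\lambda_2)\oplus V(2\lambda_1)$, while $\big(\SS_{(3,1)}(H)\oplus\SS_{(2,1,1)}(H)\big)|_{\Sp}=V(2\lambda_1+\lambda_2)\oplus 2\,V(2\lambda_1)\oplus V(\lambda_1+\lambda_3)\oplus V(\lambda_2)$, leaving $V(2\lambda_1+\lambda_2)\oplus V(2\lambda_1)\oplus V(\lambda_1+\lambda_3)$.

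\emph{Main obstacle.} I expect the degree-$4$ computation to be the hard part. One must pin down the image of the ideal $(\theta)$ inside $\L(H)_4$ as an $\Sp(H)$-submodule: it is spanned by $[\theta,\Lambda^2H]$, which is $\cong\Lambda^2_0H=V(\lambda_2)$, and by the iterated brackets $[[\theta,H],H]$, and these two have overlapping images; the Jacobi identity shows that modulo $[\theta,\Lambda^2H]$ the second piece is a quotient of $\Sym^2H=V(2\lambda_1)$, and one must check the quotient map is nonzero so that exactly one of the two copies of $V(2\lambda_1)$ in $\L(H)_4$ survives in $\Gr^W_{-4}\p$. One also needs to stay in the stable range for Littlewood's restriction formula so that there are no small-genus correction terms — this is precisely where $g\ge3$ is used, since for $g\le2$ the summand $V(\lambda_1+\lambda_3)$ is not even defined. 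A convenient consistency check on all of Step~3 is the $\Sp(H)$-equivariant Hilbert series identity $\mathrm{PExp}\big(\sum_{m\ge1}[\Gr^W_{-m}\p]\,t^m\big)=(1-[H]t+t^2)^{-1}$, which comes from the Koszulity of the cohomology ring $H^\bullet(C;\Q)$ and determines every graded character of $\Gr^W_\bullet\p$ via a plethystic logarithm.
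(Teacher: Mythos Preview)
The paper does not prove this proposition: it is stated with a citation to \cite[Prop.~8.4]{hain_inf} and no argument is given. Your proposal is therefore not competing with anything in the present paper, and in fact it reconstructs essentially the argument one finds in Hain's original reference: purity of $H_1(\p)$ plus Lemma~\ref{W=L} for Step~1, the one-relator presentation of the surface group (equivalently the minimal presentation with $H_2(\p)\cong\Q$) for Step~2, and a direct $\Sp(H)$-decomposition of the low-degree pieces of $\L(H)/(\theta)$ for Step~3.

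Your details are correct. The Littlewood restrictions you quote, $\SS_{(3,1)}|_{\Sp}=V(2\lambda_1+\lambda_2)\oplus V(2\lambda_1)$ and $\SS_{(2,1,1)}|_{\Sp}=V(\lambda_1+\lambda_3)\oplus V(2\lambda_1)\oplus V(\lambda_2)$, are right for $g\ge 3$, and the identification of the degree-$4$ ideal with $V(\lambda_2)\oplus V(2\lambda_1)$ is correct; the plethystic Hilbert-series identity you mention is exactly what makes the last verification clean, since it forces $\dim\Gr^W_{-4}\p$ and hence determines which copy of $V(2\lambda_1)$ survives without having to chase the Jacobi identity by hand. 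One small sharpening you could make in Step~2: rather than arguing via an injection $H^2(\p)\hookrightarrow H^2(C;\Q)$, simply use that a genus $g\ge 1$ surface is a $K(\pi,1)$, so $H^\bullet(\p)=H^\bullet(C;\Q)$ on the nose and the single relation is immediate.
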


Explicit presentations for derivation algebras of $\p$ will be useful, so we record their computations here. 

The adjoint action of elements of $\p$ can be viewed as inner derivations. Denote the set of all inner derivations by $\Inn\Der\p$. It is a Lie subalgebra inside the derivation Lie algebra $\Der\p$. Since $\p$ has trivial center, 
$$\p\cong\Inn\Der\p,$$ 
the adjoint action embeds $\p$ itself into $\Der\p$, and the quotient is the outer derivations 
$$\Out\Der\p:=\Der\p/\Inn\Der\p$$
of $\p$. This gives the exact sequence
$$0\to\p\to\Der\p\to\Out\Der\p\to0.$$

Note that these derivation algebras admit weight filtrations induced from that of $\p$. As usual, we shall work with the associated weight graded sequence
$$0\to\Gr^W_\bullet\p\to\Gr^W_\bullet\Der\p\to\Gr^W_\bullet\Out\Der\p\to0.$$

\begin{corollary}[{\cite[Cor.~9.2, Cor.~9.4]{hain_inf}}]
    For $g\geq 3$, we have
    $$\Gr^W_{-m}\Der\p\cong\begin{cases}
        V(\lambda_3)\oplus V(\lambda_1) & m=1;\\
        V(2\lambda_2)\oplus V(\lambda_2) & m=2;\\
        V(2\lambda_1+\lambda_3)\oplus V(\lambda_1+\lambda_2)\oplus V(3\lambda_1) & m=3.
    \end{cases}$$
    And
    $$\Gr^W_{-m}\Out\Der\p\cong\begin{cases}
        V(\lambda_3) & m=1;\\
        V(2\lambda_2) & m=2;\\
        V(2\lambda_1+\lambda_3)\oplus V(3\lambda_1) & m=3.
    \end{cases}$$
\end{corollary}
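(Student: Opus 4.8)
We would deduce this corollary from Proposition~\ref{sp str of p} by an explicit computation with symplectic representations, in three stages.

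\emph{Reduction to a kernel.} Since $H_1(\p)=H$ is pure of weight $-1$, Lemma~\ref{W=L} identifies $W_\bullet\p$ with the lower central series, so by Proposition~\ref{sp str of p} we may regard $\Gr^W_\bullet\p=\L(H)/(\theta)$ as a \emph{graded} Lie algebra generated by $H$ in degree $-1$ with the single quadratic relation $\theta\in\Lambda^2 H$. Strictness of the weight filtration, applied to $\Der\p\subseteq\Hom(\p,\p)$ (itself a sub-MHS, being the kernel of the morphism $D\mapsto((x,y)\mapsto D[x,y]-[Dx,y]-[x,Dy])$), identifies $\Gr^W_\bullet\Der\p$ with the Lie algebra of negative-degree derivations of $\L(H)/(\theta)$; at worst this gives an upper bound, to be matched against the lower bound from inner derivations below. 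A degree $-m$ derivation $D$ of $\L(H)/(\theta)$ is determined by $D|_H\colon H\to\Gr^W_{-m-1}\p$, and a linear map $f\colon H\to\Gr^W_{-m-1}\p$ extends to such a $D$ precisely when
$$\Phi(f):=\sum_{i=1}^g\big([f(a_i),b_i]+[a_i,f(b_i)]\big)=0\quad\text{in }\Gr^W_{-m-2}\p;$$
this equals ``$f$ applied to the relation $\theta=\sum a_i\wedge b_i$'', it is independent of the lift of $f$ to $\L(H)$, and it is $\Sp(H)$-equivariant because $\theta$ is canonical. Using the self-duality $H^*\cong H$ this gives an $\Sp(H)$-equivariant isomorphism
$$\Gr^W_{-m}\Der\p\;\cong\;\ker\big(\Phi\colon H\otimes\Gr^W_{-m-1}\p\longrightarrow\Gr^W_{-m-2}\p\big).$$

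\emph{The computation for $m=1,2,3$.} One feeds in the highest-weight decompositions of $\Gr^W_{-2}\p,\dots,\Gr^W_{-5}\p$: those for degrees $-2,-3,-4$ are recorded in Proposition~\ref{sp str of p}, and $\Gr^W_{-5}\p$ is computed the same way, from Witt's dimension formula for $\L(H)$ together with the symplectic decomposition of its degree-$5$ component. The source $H\otimes\Gr^W_{-m-1}\p$ is decomposed using the symplectic Pieri rule for tensoring with the standard representation $H$: for instance $H\otimes V(\lambda_2)=V(\lambda_1+\lambda_2)\oplus V(\lambda_3)\oplus V(\lambda_1)$ maps to $\Gr^W_{-3}\p=V(\lambda_1+\lambda_2)$, and $H\otimes V(\lambda_1+\lambda_2)=V(2\lambda_1+\lambda_2)\oplus V(2\lambda_2)\oplus V(\lambda_1+\lambda_3)\oplus V(\lambda_2)\oplus V(2\lambda_1)$ maps to $\Gr^W_{-4}\p=V(2\lambda_1+\lambda_2)\oplus V(2\lambda_1)\oplus V(\lambda_1+\lambda_3)$. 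Since all the symplectic representations in play are absolutely irreducible, Schur's lemma forces $\Phi$ to act by a scalar on each isotypic component, so $\ker\Phi$ is the sum of the components of the source whose isomorphism type does not occur in the target, together with any ``shared'' component on which that scalar happens to vanish. One rules out the latter by evaluating $\Phi$ on an explicit highest-weight vector in each shared component (a short bracket computation in $\L(H)$ read modulo $(\theta)$), using the inclusion $\ad(\Gr^W_{-m}\p)\subseteq\Gr^W_{-m}\Der\p$ and the image $V(\lambda_3)$ of the Johnson homomorphism in degree $-1$ as consistency checks. This produces the three displayed values of $\Gr^W_{-m}\Der\p$.

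\emph{Passage to outer derivations.} From the exact sequence $0\to\Gr^W_\bullet\p\to\Gr^W_\bullet\Der\p\to\Gr^W_\bullet\Out\Der\p\to0$ of the text, $\Gr^W_{-m}\Out\Der\p$ is the quotient of $\Gr^W_{-m}\Der\p$ by the inner derivations $\ad(\Gr^W_{-m}\p)$, which is injective since $\p$ is centerless and equals $V(\lambda_1),V(\lambda_2),V(\lambda_1+\lambda_2)$ for $m=1,2,3$ by Proposition~\ref{sp str of p}; subtracting these from the computed $\Gr^W_{-m}\Der\p$ yields the stated $\Gr^W_{-m}\Out\Der\p$. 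Under the identification above, $\ad(\Gr^W_{-m}\p)$ sits inside $H\otimes\Gr^W_{-m-1}\p$ as $x\mapsto(h\mapsto[x,h])$, which lands in the ``visible'' summands $V(\lambda_1),V(\lambda_2),V(\lambda_1+\lambda_2)$ of the source, so the bookkeeping is consistent. The main obstacle is the rank computation for $\Phi$ — deciding which structure constants vanish. By $\Sp(H)$-equivariance this is a finite problem, but it is the delicate point: one must pin down the target module $\Gr^W_{-m-2}\p$ (which for $m=3$ requires the extra piece $\Gr^W_{-5}\p$, not recorded in Proposition~\ref{sp str of p}), and then verify, by explicit brackets of low-degree elements of $\L(H)$ read modulo $(\theta)$, that $\Phi$ is nonzero on exactly the shared isotypic components. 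The remaining steps — the reduction to $\ker\Phi$, the Pieri and plethysm decompositions, and the passage to $\Out\Der\p$ — are routine.
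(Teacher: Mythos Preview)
The paper does not prove this corollary; it is quoted verbatim from \cite[Cor.~9.2, Cor.~9.4]{hain_inf} with no argument supplied. Your outline is essentially Hain's own method from \cite[\S9]{hain_inf}: identify $\Gr^W_\bullet\Der\p$ with the graded derivations of $\L(H)/(\theta)$, realize degree $-m$ derivations as $\ker\Phi$ for the ``apply to the relation $\theta$'' map $\Phi\colon H\otimes\Gr^W_{-m-1}\p\to\Gr^W_{-m-2}\p$, decompose source and target via symplectic branching, and then quotient by inner derivations. Your framing and the specific decompositions you write down match what Hain does.

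One comment on the delicate step. You correctly flag that the crux is showing $\Phi$ is nonzero on every shared isotypic component (equivalently, that $\Phi$ is surjective). In \cite{hain_inf} this is not done by evaluating on highest-weight vectors case by case; rather, Hain observes (see the discussion around \cite[Prop.~9.1]{hain_inf}) that the sequence $0\to\Der_{-m}\p\to H\otimes\Gr^W_{-m-1}\p\xrightarrow{\Phi}\Gr^W_{-m-2}\p\to0$ is exact on the right because $\Gr^W_\bullet\p$ is generated in degree $-1$: any element of $\Gr^W_{-m-2}\p$ is a sum $\sum[h_j,y_j]$ with $h_j\in H$, and setting $f(a_1)=y$ (and $f=0$ on the rest of the basis) for a single term $[a_1,y]$ gives $\Phi(f)=[y,b_1]+[a_1,f(b_1)]=\cdots$ --- more directly, one checks that the composite $H\otimes\Gr^W_{-m-1}\p\to\Gr^W_{-m-2}\p$ given by twice the bracket factors through $\Phi$ up to the $\Sp$-equivariant identification $H\cong H^*$. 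Either way, once surjectivity is established the kernel is read off immediately from the branching, with no need to test individual highest-weight vectors. Your approach would also work, but is more laborious than necessary.
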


For convenience in later sections, we shall also denote 
$$\Gr^W_{-m}\Der\p,\,\,\,\, \Gr^W_{-m}\Inn\Der\p,\,\,\,\,\text{ and }\,\,\,\, \Gr^W_{-m}\Out\Der\p$$ by $$\Der_{-m}\p,\,\,\,\,\text{ and }\,\,\,\, \Inn\Der_{-m}\p,\,\,\,\, \Out\Der_{-m}\p$$
respectively.

\subsection{Presentations for $\Gr^W_\bullet\t_{g,n}$ and $\Gr^W_\bullet\u_{g,n}$}

Johnson's famous result on the abelianization of the Torelli groups implies that for $g\ge 3$, there are \textcolor{black}{$\Sp(H)$}-equivariant isomorphisms
$$H_1(T_{g,n};\Q)=H_1(\t_{g,n};\Q)\cong H_1(\u_{g,n};\Q)\cong\Lambda_0^3H\oplus H^{\oplus n}$$
where $\Lambda_0^3H$ is the irreducible $\Sp(H)$-module corresponding to the third fundamental weight $\lambda_3$. In particular, when $n=1$,
$$H_1(\t_{g,1})\cong H_1(\u_{g,1})\cong\Lambda^3_0H\oplus H\cong\Lambda^3H\cong\Der_{-1}\p.$$
Hodge theory implies that $H_1(\u_{g,n})\cong H_1(\t_{g,n})$ is pure of weight $-1$, since $H$ is pure of weight $-1$ and so is $\Lambda_0^3 H$, which is the primitive third degree homology of the jacobian with a Tate twist. By Lemma \ref{W=L}, the weight filtration of $\u_{g,n}$ (or $\t_{g,n}$) is essentially its lower central series. 

\begin{proposition}[{\cite[Cor.~4.8]{hain_inf}}]\label{weight agrees lcs}
    The weight filtration on $\u_{g,n}$ satisfies that
    $$\u_{g,n}=W_{-1}\u_{g,n}$$
    and the $l$-th term of the lower central series of $\u_{g,n}$ is
    $$L^l\u_{g,n}=W_{-l}\u_{g,n}.$$
    Similarly, the weight filtration on $\t_{g,n}$ satisfies that
    $$\t_{g,n}=W_{-1}\t_{g,n}$$
    and the $l$-th term of the lower central series of $\t_{g,n}$ is
    $$L^l\t_{g,n}=W_{-l}\t_{g,n}.$$
\end{proposition}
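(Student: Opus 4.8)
The plan is to obtain both assertions as immediate consequences of Lemma~\ref{W=L}, applied in turn to $\g=\u_{g,n}$ and to $\g=\t_{g,n}$; the only point requiring genuine input is the purity of the first homology, and that has already been recorded above. Concretely, Lemma~\ref{W=L} says that for a pro-nilpotent Lie algebra $\g$ in the category of MHS with $H_1(\g)$ finite dimensional and pure of weight $-1$, one has $W_{-l}\g=L^l\g$ for all $l\ge 1$; the case $l=1$ already subsumes the statements $\g=W_{-1}\g$. So the entire proof amounts to checking the hypotheses in the two cases.

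For $\u_{g,n}$: it is the Lie algebra of the pro-unipotent radical $\U_{g,n}$, hence pro-nilpotent, and by \cite[Thm.~13.1, Cor.~13.2]{hain_hodge} it carries a canonical MHS whose Lie bracket is a morphism of MHS, so $\u_{g,n}$ lives in the category of MHS. Johnson's computation of the abelianization of the Torelli group \cite{john_torelli3} gives the $\Sp(H)$-equivariant isomorphism $H_1(\u_{g,n})\cong\Lambda^3_0H\oplus H^{\oplus n}$, which is finite dimensional; and the Hodge-theoretic description of this module shows it is pure of weight $-1$, since $H=H_1(C;\Q)$ is pure of weight $-1$ and $\Lambda^3_0H$ is the primitive third homology of the Jacobian with a Tate twist, hence also pure of weight $-1$. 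Lemma~\ref{W=L} then gives $W_{-l}\u_{g,n}=L^l\u_{g,n}$ for all $l$. For $\t_{g,n}$: it equals $\Lie T^\un_{g,n}$, hence is pro-nilpotent, and by Theorem~\ref{t to u mor of mhs} it carries a canonical MHS for which the bracket and the projection $\t_{g,n}\to\u_{g,n}$ are morphisms of MHS. The kernel of that projection is the central copy of $\Q$ in the non-trivial central extension $0\to\Q\to\t_{g,n}\to\u_{g,n}\to0$; since $H_1(\t_{g,n})=H_1(T_{g,n})\otimes\Q\cong\Lambda^3_0H\oplus H^{\oplus n}$ by Johnson, comparing dimensions shows this central $\Q$ lies in $[\t_{g,n},\t_{g,n}]$, so the induced map $H_1(\t_{g,n})\to H_1(\u_{g,n})$ is an isomorphism of MHS; in particular $H_1(\t_{g,n})$ is finite dimensional and pure of weight $-1$, and Lemma~\ref{W=L} applies verbatim.

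I do not expect a real obstacle here: the substantive content has already been absorbed into Lemma~\ref{W=L} (the general comparison of the weight filtration with the lower central series once $H_1$ is pure) and into the identification of $H_1$ via Johnson's theorem together with its geometric Hodge structure. The one step deserving care is confirming that, in the canonical MHS of \cite{hain_hodge} and of \cite[Thm.~4.10]{hain_inf}, the abelianization is genuinely pure of weight $-1$ and not merely concentrated in weights $\le -1$ — for $\t_{g,n}$ this is precisely the point where one must know the central $\Q$ is a commutator, i.e.\ that it does not survive to $H_1$.
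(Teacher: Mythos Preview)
Your proposal is correct and follows essentially the same approach as the paper: the paragraph immediately preceding the proposition already records that $H_1(\u_{g,n})\cong H_1(\t_{g,n})\cong\Lambda^3_0H\oplus H^{\oplus n}$ is pure of weight $-1$ by Johnson's theorem and Hodge theory, and then invokes Lemma~\ref{W=L} to conclude. Your additional remark, that the central $\Q$ in the extension $0\to\Q\to\t_{g,n}\to\u_{g,n}\to0$ must lie in the derived subalgebra (so that $H_1(\t_{g,n})\to H_1(\u_{g,n})$ is an isomorphism of MHS), is a helpful clarification that the paper leaves implicit.
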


In turn, explicit presentations can be computed for $\Gr^W_\bullet\u_{g,n}$. The generator $H_1(\u_{g,n})$ is known as above, so we only need to determine relations. The strategy is to find a potentially large homomorphism image of $\u_{g,n}$ (or its graded variant). The relations on the image provide an upper bound on relations of the preimage. A lower bound for relations is found by computing brackets of carefully chosen elements in the Torelli group. In case the upper bound agrees with the lower bound, all relations are serendipitously found.

The sought after homomorphism comes from the Lie algebra version (or its associated weight graded variant)
$$\u_{g,1}\to\Der\p\quad\text{or}\quad\Gr^W_\bullet\u_{g,1}\to\Gr^W_\bullet\Der\p$$
of the monodromy representation
$$\rho: \Gamma_{g,1}\cong\pi_1(\M^\an_{g,1}, *)\to\Aut\p$$
of the universal curve $\pi: \cC^\an_{g,1}\to\M^\an_{g,1}$. Here $*=[C,x]$ is a moduli point in $\M^\an_{g,1}$, and $\p$ is the Malcev Lie algebra of $\pi_1(C,x)$ as before.   

\begin{theorem}[\cite{hain_inf}, \cite{hain_g3}, {\cite[Thm.~8.2]{hain_johnson}}]
    For all $g\ge 4$, the graded Lie algebra $\Gr^W_\bullet\u_{g,n}$ is quadratically presented. In particular,
    $$\Gr^W_\bullet\u_{g,1}\cong\L(\Lambda^3 H)/(R_2)$$
    where $R_2$ is the kernel of the Lie bracket    $$\Lambda^2\Der_{-1}\p\to\Der_{-2}\p.$$
    In genus $3$, the graded Lie algebra $\u_{3,n}$ has non-trivial quadratic and cubic relations. These are determined by the condition that $\Gr^W_\bullet\u_{3,1}\to\Gr^W_\bullet\Der\p$ is an isomorphism in weights $-1$, $-2$ and injective in weight $-3$.
\end{theorem}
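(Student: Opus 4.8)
The plan is to realise $\Gr^W_\bullet\u_{g,1}$ inside $\Gr^W_\bullet\Der\p$ via the monodromy of the universal curve, and then read the presentation off the explicit structure of $\Der\p$. By the minimal-presentation formalism recalled in \S5, $\Gr^W_\bullet\u_{g,1}$ is the quotient of the free graded Lie algebra $\L(H_1(\u_{g,1}))$ by an ideal whose space of minimal generators is isomorphic to $H_2(\u_{g,1})$ and whose quadratic part is dual to the cup product $\Lambda^2 H^1(\u_{g,1})\to H^2(\u_{g,1})$. Since $H_1(\u_{g,1})\cong\Lambda^3 H\cong\Der_{-1}\p$ is pure of weight $-1$ (Johnson), Lemma \ref{W=L} identifies the weight filtration with the lower central series, so the minimal relations in bracket-degree $m$ are exactly the weight-$(-m)$ part of $H_2(\u_{g,1})$. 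Thus ``quadratically presented'' is equivalent to ``$H_2(\u_{g,1})$ pure of weight $-2$'', and the genus-$3$ statement is ``$H_2(\u_{3,1})$ concentrated in weights $-2$ and $-3$''.

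First I would analyse the graded Lie algebra map $\varphi\colon\Gr^W_\bullet\u_{g,1}\to\Gr^W_\bullet\Der\p$ induced by the monodromy action $\G_{g,1}\cong\pi_1^\orb(\M^\an_{g,1},*)\to\Aut\cP$ of the universal curve. It is strictly compatible with weights (Hodge theory), and in weight $-1$ it is the canonical isomorphism $\Lambda^3 H\cong\Der_{-1}\p$; hence its image $\mathfrak m_\bullet$ is the graded Lie subalgebra of $\Gr^W_\bullet\Der\p$ generated by $\Der_{-1}\p$, and $\varphi$ factors as $\L(\Lambda^3 H)\twoheadrightarrow\Gr^W_\bullet\u_{g,1}\twoheadrightarrow\mathfrak m_\bullet\hookrightarrow\Gr^W_\bullet\Der\p$. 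In particular the relation ideal of $\u_{g,1}$ is contained in that of $\mathfrak m_\bullet$, giving an upper bound on the relations. A short $\Sp(H)$-representation computation (valid for $g\ge3$) shows $\Lambda^2(\Lambda^3 H)\to\Der_{-2}\p$ is surjective, so $\mathfrak m_{-2}=\Der_{-2}\p$ and the degree-$2$ relations of $\u_{g,1}$ lie in $R_2=\ker(\Lambda^2\Der_{-1}\p\to\Der_{-2}\p)$; using Hain's explicit description of $\Der\p$ in \cite{hain_inf} one then computes the full relation ideal of $\mathfrak m_\bullet$ and finds that it is generated in degree $2$ for $g\ge4$ but requires one extra generator in degree $3$ for $g=3$.

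The matching lower bound is the injectivity of $\varphi$: in all weights for $g\ge4$, so that $\Gr^W_\bullet\u_{g,1}\cong\mathfrak m_\bullet$ and is therefore quadratically presented by $R_2$; and at least in weights $-1,-2,-3$ for $g=3$. Equivalently one must bound $H_2(\u_{g,1})$ from above, for which I would use the injection $H^2(\u_{g,1})\hookrightarrow\bigoplus_\alpha H^1(\G_{g,1},V_\alpha)\otimes V_\alpha^*$ from \S5, together with the computations of the cohomology of the mapping class group with symplectic coefficients: for large $g$ only finitely many $V_\alpha$ contribute, and a weight count --- each $H^1(\G_{g,1},V_\alpha)$ being pure of the weight forced by its underlying variation of Hodge structure --- places $H_2(\u_{g,1})$ in weight $-2$. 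In genus $3$ the same count leaves one further constituent living in weight $-3$, which is exactly the cubic relation, and still forces injectivity of $\varphi$ in weight $-3$; combined with the upper bound this identifies $\Gr^W_\bullet\u_{3,1}$ as the unique quotient of $\L(\Lambda^3 H)$ that agrees with $\Gr^W_\bullet\Der\p$ in weights $-1,-2$ and injects in weight $-3$.

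I expect the lower bound to be the main obstacle: it genuinely requires the (deep) input on $H^1(\G_{g,1};V_\alpha)$ --- equivalently, that the relevant higher obstructions to further relations vanish --- and one must verify, by an explicit $\Sp(H)$-decomposition of $\Der_{-3}\p\cong V(2\lambda_1+\lambda_3)\oplus V(\lambda_1+\lambda_2)\oplus V(3\lambda_1)$ against the degree-$3$ part of $\L(\Lambda^3 H)/(R_2)$, that in genus $3$ precisely one cubic relation is needed, rather than none or an uncontrolled tower. The analysis in weights $\le -2$, by contrast, is either pure representation theory or a formal consequence of the minimal-presentation package.
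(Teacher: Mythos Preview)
This theorem is cited from Hain's papers and is not proved in the present paper; what the paper offers is only the two-paragraph strategy sketch immediately preceding the statement. Your outline is considerably more detailed than that sketch and is broadly faithful to the actual arguments in \cite{hain_inf}, \cite{hain_g3}, \cite{hain_johnson}: realise $\Gr^W_\bullet\u_{g,1}$ inside $\Gr^W_\bullet\Der\p$, use the minimal-presentation formalism together with purity of $H_1$ to translate ``quadratic'' into ``$H_2$ pure of weight $-2$'', and then pin down $H_2$.

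There is one divergence in emphasis worth noting. The paper's sketch describes the lower bound on relations as coming from ``computing brackets of carefully chosen elements in the Torelli group'' --- a direct verification that specific relations in $R_2$ actually hold in $\t_{g,1}$. You instead deduce injectivity of $\varphi$ from cohomological constraints on $H^2(\u_{g,1})$ via the injection into $\bigoplus_\alpha H^2(\Gamma_{g,1},V_\alpha)\otimes V_\alpha^*$ (the paper's statement of this injection in \S5, which you copied, has $H^1$ where $H^2$ is meant). Your route is in fact closer to how Hain ultimately closes the argument --- Kabanov-type purity for $g\ge4$ in \cite{hain_inf}, and the more delicate genus-$3$ analysis in \cite{hain_g3} --- and you correctly flag this as the deep step. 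The bracket computations alluded to in the paper's sketch are complementary: they exhibit relations, but do not by themselves rule out further ones.

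One point to tighten: your sentence ``one then computes the full relation ideal of $\mathfrak m_\bullet$ and finds that it is generated in degree~$2$ for $g\ge4$'' reads as if a presentation of $\mathfrak m_\bullet$ were available a priori from $\Der\p$. In practice the logic runs the other way: one first bounds $H_2(\u_{g,1})$ by the cohomological/purity argument, concludes $\u_{g,1}$ is quadratic, and \emph{then} identifies the quadratic relations with $R_2$ by comparing $\Gr^W_{-2}\u_{g,1}$ with $\Der_{-2}\p$. As written your upper-bound step risks assuming what the lower-bound step is meant to prove.
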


\begin{remark}
As for $\Gr^W_\bullet\t_{g,n}$, it is a central extension
    $$0\to\Q\to\Gr^W_\bullet\t_{g,n}\to\Gr^W_\bullet\u_{g,n}\to0.$$
The central $\Q$ lies in the image of the map
$$\Lambda^2H_1(\t_g)\to\Gr^W_{-2}\t_g$$
induced by the Lie bracket. It has to be of weight $-2$, as the Lie bracket is a morphism of MHS. Since it is one dimensional, it must be $\Q(1)$.
\end{remark}

\begin{proposition}\label{t to u sp}
    The map $\Gr^W_\bullet\t_{g,n}\to\Gr^W_\bullet\u_{g,n}$ is \textcolor{black}{$\Sp(H)$}-equivariant.
\end{proposition}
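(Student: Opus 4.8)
The plan is to realize the map $\Gr^W_\bullet\t_{g,n}\to\Gr^W_\bullet\u_{g,n}$ as the associated graded of a map that is visibly equivariant for the conjugation action of $\G_{g,n}$, and then to promote this to $\Sp(H)$-equivariance by a Zariski density argument. First I would recall where the map and the two $\Sp(H)$-actions come from. By Theorem \ref{t to u mor of mhs} the natural homomorphism $\t_{g,n}\to\u_{g,n}$ is a morphism of MHS; morphisms of MHS are strict for the weight filtration, so it induces a map of weight-graded Lie algebras, and by Proposition \ref{weight agrees lcs} the weight filtrations on $\t_{g,n}$ and $\u_{g,n}$ are their lower central series, so this is precisely the map $\Gr^\bullet_L\t_{g,n}\to\Gr^\bullet_L\u_{g,n}$ on associated graded Lie algebras of the lower central series. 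The $\Sp(H)$-action on $\Gr^W_\bullet\u_{g,n}$ is the one induced by the adjoint action of $\Gr^W_0\g_{g,n}\cong\sp(H)$ (equivalently, by a Levi splitting $\Sp(H)\hookrightarrow\cG_{g,n}$), integrated to the connected group $\Sp(H)$; the $\Sp(H)$-action on $\Gr^W_\bullet\t_{g,n}$ is the one which, following Johnson's computation of $H_1(T_{g,n})$, extends algebraically the action of $\Sp(H_\Z)=\G_{g,n}/T_{g,n}$ on $\Gr^\bullet_L\t_{g,n}$.

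Next I would exploit naturality in $\G_{g,n}$. The mapping class group acts by conjugation on its normal subgroup $T_{g,n}$ and, via $\widetilde{\rho}\colon\G_{g,n}\to\cG_{g,n}(\Q)$, by conjugation on the normal pro-unipotent subgroup $\U_{g,n}\triangleleft\cG_{g,n}$. The homomorphism $T_{g,n}\to\U_{g,n}(\Q)$ obtained by restricting $\widetilde{\rho}$ (which lands in $\U_{g,n}$ because $T_{g,n}=\ker\rho$) intertwines these conjugation actions, since $\widetilde{\rho}(\gamma t\gamma^{-1})=\widetilde{\rho}(\gamma)\widetilde{\rho}(t)\widetilde{\rho}(\gamma)^{-1}$. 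Applying the unipotent-completion and Lie-algebra functors, the map $\t_{g,n}\to\u_{g,n}$ is $\G_{g,n}$-equivariant, and as the action preserves the lower central series it descends to a $\G_{g,n}$-equivariant map of associated graded Lie algebras. On the associated graded of a lower central series inner automorphisms act trivially (bracketing with any element of $L^1$, which here is the whole Lie algebra, raises the weight), so the subgroups $T_{g,n}$ and $\U_{g,n}$ act trivially and the $\G_{g,n}$-action factors through $\Sp(H_\Z)$; hence $\Gr^W_\bullet\t_{g,n}\to\Gr^W_\bullet\u_{g,n}$ is $\Sp(H_\Z)$-equivariant. The same remark identifies the resulting $\Sp(H_\Z)$-actions with those described above: for $\u_{g,n}$, $\widetilde{\rho}(\gamma)$ differs from the Levi lift of $\rho(\gamma)$ by an element of $\U_{g,n}(\Q)$, which acts trivially on the associated graded; and for $\t_{g,n}$ this is, by construction, the action that Johnson's theorem extends to $\Sp(H)$.

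Finally I would run the density step. In each weight $-m$ the spaces $\Gr^W_{-m}\u_{g,n}$ and $\Gr^W_{-m}\t_{g,n}$ are finite-dimensional algebraic representations of $\Sp(H)$: $\Gr^W_\bullet\u_{g,n}$ is generated as a Lie algebra by the algebraic $\Sp(H)$-module $H_1(\u_{g,n})$ (Johnson), and $\Gr^W_\bullet\t_{g,n}$ is a central extension of it by $\Q(1)$, so both are finite-dimensional in every degree and the $\Sp(H_\Z)$-actions extend uniquely to algebraic $\Sp(H)$-actions. Since $\Sp(H_\Z)$ is Zariski dense in $\Sp(H)$, any $\Sp(H_\Z)$-equivariant linear map between finite-dimensional algebraic $\Sp(H)$-representations is automatically $\Sp(H)$-equivariant; applying this weight by weight gives the proposition.

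The step I expect to be most delicate is the bookkeeping in the second paragraph, namely confirming that the conjugation-induced $\Sp(H_\Z)$-action on $\Gr^W_\bullet\u_{g,n}$ coincides with the Levi/$\sp(H)$-action, and that the conjugation-induced $\Sp(H_\Z)$-action on $\Gr^W_\bullet\t_{g,n}$ is exactly the one that Johnson's theorem extends to an algebraic $\Sp(H)$-action --- i.e.\ that the a priori different $\Sp(H)$-structures on source and target really are the ones used throughout the paper. Both points rest on the triviality of inner automorphisms on lower-central-series graded pieces, together with Johnson's theorem and the finite generation of the two graded Lie algebras (in weight $-1$ for $\u_{g,n}$, in weights $-1$ and $-2$ for $\t_{g,n}$).
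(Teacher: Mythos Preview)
Your proposal is correct and follows essentially the same approach as the paper: establish $\Gamma_{g,n}$-equivariance of $\t_{g,n}\to\u_{g,n}$ via the conjugation action, observe that $T_{g,n}$ acts trivially on the associated graded because the weight filtration is the lower central series, deduce $\Sp(H_\Z)$-equivariance, and then promote to $\Sp(H)$-equivariance by Zariski density. Your additional care in identifying the two possible $\Sp(H)$-actions on source and target (via the Levi splitting versus via conjugation) is more explicit than the paper's own proof, but the argument is the same.
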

\begin{proof}
    This map comes from $\t_{g,n}\to\u_{g,n}$, which is $\Gamma_{g,n}$-equivariant, cf. \cite[Pf. of Thm.~4.10]{hain_inf}. The $\Gamma_{g,n}$ action on $\t_{g,n}$ is induced from the action of $\Gamma_{g,n}$ on $T_{g,n}$ by conjugation. Taking associated weight graded for $\t_{g,n}\to\u_{g,n}$, the map $\Gr^W_\bullet\t_{g,n}\to\Gr^W_\bullet\u_{g,n}$ is still $\Gamma_{g,n}$-equivariant. But $T_{g,n}$ acts trivially on each weight graded piece, which by Lemma \ref{W=L} are the same as the graded pieces of lower central series of $\t_{g,n}$, so the map is $\Sp(H_\Z)$-equivariant. \textcolor{black}{Since the action of $\Sp(H_\Z)$ on each graded piece extends to a rational representation of $\Sp(H)$, the result follows.}
\end{proof}

\section{The Lie algebras $\d_{g,n}$ and $\v_{g,n}$}
Consider the monodromy representation $\rho^\hyp_\ell:\pi_1^\alg(\H_{g,n/\C})\to \Sp(H_\Ql)$ and the projection $\pi^\hyp_\ast:\pi_1^\alg(\cC_{\H_{g,n/\C}})\to \pi_1^\alg(\H_{g,n/\C})$ induced by $\pi^\hyp$.
Let 
$\rho^\hyp_{\cC,\ell}: \pi_1^\alg(\cC_{\H_{g,n/\C}})\to \Sp(H_\Ql)$ be the representation $\rho^\hyp_\ell\circ \pi^\hyp_\ast$.
Denote the continuous relative completion of $\pi_1^\alg(\H_{g,n/\C})$ and $\pi_1^\alg(\cC_{\H_{g,n/\C}})$ with respect to  $\rho^\hyp_\ell$ and $\rho^\hyp_{\cC,\ell}$ by $\cD_{g,n/\Ql}$ and $\cD_{\cC_{g,n}/\Ql}$ and their unipotent radicals by $\cV_{g,n/\Ql}$ and $\cV_{\cC_{g,n}/\Ql}$, respectively. Let their Lie algebras be denoted by $\d_{g,n/\Ql}$, $\d_{\cC_{g,n}/\Ql}$, $\v_{g,n/\Ql}$, and $\v_{\cC_{g,n}/\Ql}$, respectively. \\
\indent 
Recall that the Lie algebra $\d_{g,n}$ is equipped with a MHS and admits natural weight filtration $W_\bullet\d_{g,n}$ defined over $\Q$. 
Since $\cD_{g,n/\Ql}\cong \cD_{g,n}\otimes \Ql$ and so $\d_{g,n/\Ql}\cong \d_{g,n}\otimes_\Q\Ql$, the weight filtration of $\d_{g,n}$ lifts to that of $\d_{g,n/\Ql}$, and it is preserved by the adjoint action of $\cD_{g,n/\Ql}$. Similarly, $\d_{\cC_{g,n}/\Ql}$ admits a weight filtration $W_\bullet\d_{\cC_{g,n}/\Ql}$ as well. 

Associated to the universal hyperelliptic curve $\pi^\hyp: \cC_{\H_{g,n/\C}} \to \H_{g,n/\C}$, there exists an exact sequence of algebraic fundamental groups:
$$
1 \to \pi_1^\alg(C) \to \pi_1^\alg(\cC_{\H_{g,n/\C}}) \overset{\pi^\hyp_\ast}\to \pi_1^\alg(\H_{g,n/\C}) \to 1.
$$

Applying the continuous relative completion to this sequence, we obtain the exact sequence of pro-algebraic groups over $\Ql$:
$$
1 \to\cP_{\Ql}\to \cD_{\cC_{g,n}/\Ql}\to \cD_{g,n/\Ql}\to 1.
$$
Restricting to the unipotent radicals of the middle and right terms yields the exact sequence of pro-unipotent groups over $\Ql$:
\begin{equation*}
1 \to\cP_{\Ql}\to \cV_{\cC_{g,n}/\Ql}\to \cV_{g,n/\Ql}\to 1,
\end{equation*}
 and hence the exact sequence of pro-nilpotent Lie algebras:
\begin{equation}\label{hyp_seq}
0 \to \p_\Ql \to \v_{\cC_{g,n}/\Ql} \to \v_{g,n/\Ql} \to 0.
\end{equation}
For the case $n = 0$, there is an isomorphism $\cC_{\H_{g/\C}} \cong \H_{g,1/\C}$, and thus $\pi_1^\alg(\cC_{\H_{g/\C}})$ is isomorphic to the profinite completion $\widehat{\Delta_{g,1}}$. Consequently, the sequence \eqref{hyp_seq} yields to:
\begin{equation}\label{n=1 hyp}
0 \to \p_\Ql \to \v_{g,1/\Ql} \to \v_{g/\Ql} \to 0.
\end{equation}
Next, taking the quotient of each term in the sequence \eqref{n=1 hyp} by $W_{-5}$ and applying the exact functor $\Gr^W_\bullet$ to the resulting sequence, we obtain the following exact sequence of 4-step graded Lie algebras:
\[
0 \to \Gr^W_\bullet (\p_\Ql / W_{-5}) \to \Gr^W_\bullet (\v_{g,1/\Ql} / W_{-5}) \to \Gr^W_\bullet (\v_{g/\Ql} / W_{-5}) \to 0.
\]
Denote $\Gr^W_\bullet (\v_{g,n/\Ql} / W_{-5})$
by $\h_{g,n}$ and the projection $\h_{g,1} \to \h_{g}$ by $\beta_{g}$. 
\begin{theorem}[{\cite[Thm.~8.4, Cor.~8.6]{wat_hyp_univ}\cite[Thm.~9.4]{wat_rk_hyp_univ}}]\label{key result from hyp}
With the notation as above, if $g \geq 3$, the projection $\beta_g$ admits no $\mathrm{Sp}(H_{\mathbb{Q}_\ell})$-equivariant graded Lie algebra section. Consequently, the projection 
\[
\mathrm{Gr}^W_\bullet \mathfrak{v}_{g,1/\mathbb{Q}_\ell} \to \mathrm{Gr}^W_\bullet \mathfrak{v}_{g/\mathbb{Q}_\ell}
\]
also admits no such section.
\end{theorem}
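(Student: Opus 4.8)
The plan is to assume that $\beta_g$ admits an $\Sp(H_{\Ql})$-equivariant graded Lie algebra section $s\colon\h_g\to\h_{g,1}$ and to derive a representation-theoretic contradiction living in weights $-2$, $-3$, $-4$. Granting the first assertion, the second is immediate: an $\Sp$-equivariant graded Lie algebra section of $\Gr^W_\bullet\v_{g,1/\Ql}\to\Gr^W_\bullet\v_{g/\Ql}$ preserves weights, so restricting it to the weight $\ge-4$ part yields an $\Sp$-equivariant graded section of $\beta_g$. Thus the content is entirely in the first assertion, which, as with the non-hyperelliptic obstruction recalled in the introduction, will be obstructed by the Lie bracket of generators of $\v_{g,1}$ mapping into the kernel $\Gr^W_\bullet\p$ of $\beta_g$ — the difference being that here this happens in a higher weight.

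First I would compute the $\Sp(H)$-module structure of each graded piece. As in the non-hyperelliptic case (Proposition~\ref{weight agrees lcs}), $H_1(\v_{g,n})$ is pure of weight $-1$, so Lemma~\ref{W=L} identifies the weight filtration of $\v_{g,n}$ with its lower central series; hence $\h_{g,n}=\Gr^W_\bullet(\v_{g,n/\Ql}/W_{-5})$ is a genuine four-step graded $\Sp(H)$-Lie algebra, generated in weight $-1$ by $H_1(\v_{g,n})$. The generating module comes from the cohomology of hyperelliptic mapping class groups with symplectic coefficients via $H_1(\v_{g,n})\cong\bigoplus_\alpha H^1(\Delta_{g,n},V_\alpha)\otimes V_\alpha^*$. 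The decisive point is that the hyperelliptic involution $\sigma$ is central in $\Delta_g$ and acts on $H$ as $-\id$, so that $H^i(\Delta_g,V)=0$ for all $i$ whenever $\sigma$ acts on $V$ by $-\id$; in particular $\Lambda^3_0H$ — the lone generator of $\Gr^W_\bullet\u_g$ in the classical story — does \emph{not} occur in $H_1(\v_g)$. Restricting the displayed exact sequence $0\to\Gr^W_\bullet(\p_\Ql/W_{-5})\to\h_{g,1}\to\h_g\to0$ to weight $-1$ and using reductivity of $\Sp(H)$ then gives $H_1(\v_{g,1})\cong H_1(\v_g)\oplus H$, the new summand $H$ playing the role that $\Lambda^3H=\Lambda^3_0H\oplus H$ does non-hyperelliptically. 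The relations of $\h_{g,n}$ are pinned down, exactly as Hain does for $\u_{g,1}$, by the monodromy representation of the universal hyperelliptic curve — the map $\v_{g,1}\to\Der\p$ together with the computation of $\Gr^W_{-m}\Der\p$ for $m\le3$ recorded above — and in genus $3$ the quadratic relations do not suffice, which is why the statement is framed for the four-step truncation. Finally, by Proposition~\ref{sp str of p} the kernel of $\beta_g$, namely $\Gr^W_\bullet(\p_\Ql/W_{-5})$, has graded pieces $H$, $\Lambda^2_0H$, $V(\lambda_1+\lambda_2)$ and $V(2\lambda_1+\lambda_2)\oplus V(2\lambda_1)\oplus V(\lambda_1+\lambda_3)$ in weights $-1,\dots,-4$.

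Next I would reduce the existence of $s$ to the vanishing of three obstruction classes. As $\h_g$ is generated in weight $-1$, $s$ is determined by its weight $-1$ component $s_{-1}\colon H_1(\v_g)\to H_1(\v_{g,1})=H_1(\v_g)\oplus H$, an $\Sp$-equivariant splitting of $0\to H\to H_1(\v_{g,1})\to H_1(\v_g)\to0$; since $H$ does not occur in $H_1(\v_g)$, Schur's lemma makes $s_{-1}$ the \emph{unique} such splitting, the canonical inclusion. Then $s$ exists if and only if this forced $s_{-1}$ extends to a graded Lie algebra map on the four-step $\h_g$, i.e.\ if and only if it kills all relations of $\h_g$, which occur in weights $-2$, $-3$, $-4$. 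Evaluating a degree $-m$ relation $r$ of $\h_g$ on $s_{-1}$ gives an element $r(s_{-1})\in\h_{g,1}$ mapping to $0$ in $\h_g$, hence lying in $\Gr^W_{-m}\p$, so we obtain $\Sp(H)$-equivariant maps
\[
o_m\colon\ R_m(\h_g)\longrightarrow\Gr^W_{-m}\p,\qquad r\longmapsto r(s_{-1}),\qquad m=2,3,4,
\]
$R_m(\h_g)$ being the degree $-m$ relation module, and $s$ exists iff $o_2=o_3=o_4=0$. Each relevant $\Hom_{\Sp(H)}$-space should be at most one-dimensional, so the problem reduces to showing a single scalar is nonzero. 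The argument is uniform for $g\ge3$; the delicate case is $g=3$, where the weight $-2$ obstruction degenerates for representation-theoretic reasons (the same kind of phenomenon by which $\Lambda^2(\Lambda^3_0H)$ fails to contain $\Lambda^2_0H$ in the non-hyperelliptic genus-$3$ picture), forcing one to exhibit a nonzero $o_3$ valued in $V(\lambda_1+\lambda_2)$ or a nonzero $o_4$ valued in a summand of $\Gr^W_{-4}\p$ — which is exactly why the truncation is kept down to weight $-4$.

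The nonvanishing of the chosen scalar is then checked by an explicit computation in the hyperelliptic mapping class group: one exhibits concrete elements of $\Delta_{g,1}$ — Dehn twists about $\sigma$-symmetric separating curves and bounding-pair maps built from pairs of Weierstrass points — whose classes span the relevant part of $H_1(\v_g)$, computes the corresponding iterated commutators in the Johnson-type filtration quotients of $\Delta_{g,1}$ (equivalently in $\v_{g,1}$ modulo $W_{-m-1}$), and reads off a nonzero component in $\Gr^W_{-m}\p\subset\Gr^W_{-m}\v_{g,1}$. I expect this last step to be the main obstacle — choosing mapping classes that are simultaneously $\sigma$-equivariant and computationally manageable, and carrying the bracketing deep enough (into weight $-3$ or $-4$, where the genus-$3$ obstruction actually lives) to certify $o_m\neq0$ — together with the multiplicity bookkeeping of the previous step, needed to rule out any cancellation between $\Sp(H)$-isotypic components that would spuriously annihilate $o_m$.
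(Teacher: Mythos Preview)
Your overall strategy --- force the section on generators by Schur's lemma, then find a bracket of generators in $\v_{g,1}$ that lands nontrivially in the kernel $\Gr^W_\bullet\p$ --- is exactly the paper's. But your setup rests on a wrong structural fact, and this makes your weight bookkeeping, and hence your obstruction analysis, off by one step throughout.

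You assert that $H_1(\v_{g,n})$ is pure of weight $-1$, by analogy with $\u_{g,n}$. In fact $H_1(\v_g)$ is pure of weight $-2$. The analogy fails precisely because, as you yourself observe, $\sigma$ kills all odd-weight representations in $H^1(\Delta_g,-)$; what survives is generated by things like $V(2\lambda_2)$, which Hodge theory places in weight $-2$ (they map to $\Der_{-2}\p$, not $\Der_{-1}\p$). Consequently $\Gr^W_{-1}\v_g=0$, the generators of $\h_g$ sit in weight $-2$, and a single bracket of generators lands in weight $-4$ --- which is why the truncation stops at $W_{-5}$. In particular your claimed splitting $H_1(\v_{g,1})\cong H_1(\v_g)\oplus H$ in weight $-1$ is wrong; the correct splitting is $\Gr^W_{-2}\v_{g,1}\cong H_1(\v_g)\oplus\Gr^W_{-2}\p$ with $\Gr^W_{-2}\p\cong V(\lambda_2)$, and the forced section comes from $\Hom_{\Sp}\big(H_1(\v_g),V(\lambda_2)\big)=0$, not from the absence of $H$ in $H_1(\v_g)$.

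With this correction the argument is considerably simpler than you outline: there are no obstructions $o_2,o_3$ to chase and no genus-$3$ degeneration to worry about. One exhibits two explicit elements $\delta_1,\delta_2$ lying in the $V(2\lambda_2)$-part of $H_1(\v_g)\subset\Gr^W_{-2}\v_{g,1}$, whose images under the adjoint map $\Gr^W_\bullet\v_{g,1}\to\Gr^W_\bullet\Der\p$ are derivations $d_1^\out,d_2^\out\in V(2\lambda_2)\subset\Der_{-2}\p$ with $[d_1^\out,d_2^\out]\in\Inn\Der_{-4}\p$; this forces $[\delta_1,\delta_2]$ to be a nonzero element of $\Gr^W_{-4}\p$, hence $[\bar\delta_1,\bar\delta_2]=0$ in $\Gr^W_{-4}\v_g$, and any $\Sp$-equivariant section (which must send $\bar\delta_i\mapsto\delta_i$) would take $0$ to a nonzero element. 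The argument is uniform in $g\ge3$; the genus-$3$ subtlety you anticipate is a feature of the $\u_g$ story, not the hyperelliptic one.
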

In \cite{wat_hyp_univ}, the proof of the theorem used the weighted completion (see \cite[\S 7]{wat_hyp_univ}) of $\pi_1^\alg(\H_{g,n/k})$, where the field $k$  is a field of characteristic zero satisfying a mild condition. In this paper, in order to use an analogous result, we need to replace the weighted completion with the (continuous) relative completion of $\pi_1^\alg(\H_{g,n/\C})$. A key point that allows us to use similar computations done in \cite{wat_hyp_univ} is that $H_1(\v_{g})$ is pure of weight $-2$ using the weight filtration obtained from the Hodge theory. The detail of this fact can be found in \cite[\S 6]{wat_rk_hyp_univ}. The following is a sketch of the proof of the theorem for the case over $\Q$.


By \cite[Thm.~6.14]{wat_hyp_univ}, there exist two derivations $d^\out_1$ and $d^\out_2$ in $V(2\lambda_2)\subset \Der_{-2}\p$ such that the bracket $[d^\out_1, d^\out_2]$ lies in $\Inn\Der_{-4}\p$. 

Consider the exact sequence:
$$
0\to \Gr^W_\bullet\p\to\Gr^W_\bullet\v_{g,1}\to \Gr^W_\bullet\v_g\to 0.
$$
Via the adjoint action,
there exists an $\Sp(H)$-equivariant graded Lie algebra homomorphism $\Gr^W_\bullet\v_{g,1}\to \Gr^W_\bullet\Der\p$. This yields an $\Sp(H)$-equivariant map $\Gr^W_{-2}\v_{g,1}\to \Der_{-2}\p$. The fact that $H_1(\v_g)$ is pure of weight $-2$ implies that there exist $\Sp(H)$-equivariant isomorphisms
$$
\Gr^W_{-2}\v_{g,1} \cong \Gr^W_{-2}\p \oplus \Gr^W_{-2}\v_g\cong \Gr^W_{-2}\p \oplus H_1(\v_g).
$$
The analogous results of \cite[Lemma~7.9, Prop.~8.3] {wat_hyp_univ} for our case provide two vectors $\delta_1$ and $\delta_2$ in $H_1(\v_g)\subset\Gr^W_{-2}\v_{g,1}$ such that  their images under the adjoint map $\Gr^W_{-2}\v_{g,1}\to\Der_{-2}\p$
are $d^\out_1$ and $d^\out_2$, respectively, and the bracket $[\delta_1, \delta_2]$ is a nontrivial vector in $\Gr^W_{-4}\p$. Denote the images of $\delta_1$ and $\delta_2$ in $\Gr^W_{-2}\v_g$ by $\bar\delta_1$ and $\bar\delta_2$, respectively. Suppose that $\beta_g$ admits an $\Sp(H)$-equivariant graded Lie algebra section $s$. Since the bracket $[\delta_1, \delta_2]$ is in $\Gr^W_{-4}\p$, it follows that  $[\bar\delta_1,\bar\delta_2]=0$ in $\Gr^W_{-4}\v_g$. By an analogous result of \cite[Lemma 7.9]{wat_hyp_univ}, $s$ necessarily maps $\bar \delta_1$ and $\bar\delta_2$ to $\delta_1$ and $\delta_2$, respectively. This is due to the fact that there is no $\Sp(H)$-equivariant map from $H_1(\v_g)$ to $\Gr^W_{-2}\p\cong V(\lambda_2)$. 
Since $s$ is a Lie algebra homomorphism, we would have $s(0)= [\delta_1, \delta_2]\not=0$, a contradiction. 

\section{Proofs of the main results}
Denote the continuous relative completion of $\pi_1^\alg(\M_{g,n/\C})$  by $\cG_{g,n/\Ql}$ and denote its unipotent radical by $\U_{g,n/\Ql}$. Denote the Lie algebras of $\cG_{g,n/\Ql}$ and $\U_{g,n/\Ql}$ by $\g_{g,n/\Ql}$ and $\u_{g,n/\Ql}$, respectively. By Remark \ref{bc}, there is a natural isomorphism 
$$
\cG_{g,n/\Ql}\cong \cG_{g,n}\otimes \Ql.
$$

\begin{proposition}\label{sp equiv} Let $g\geq 2$ and $n\geq 0$. 
The closed immersion $i:\H_{g,n/\C}\hookrightarrow\M_{g,n/\C}$ induces an $\Sp(H_\Ql)$-equivariant graded Lie algebra map $\Gr^W_\bullet\v_{g,n/\Ql}\to \Gr^W_\bullet\u_{g,n/\Ql}$.
\end{proposition}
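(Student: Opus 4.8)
We outline a proof. The plan is to produce the map by the naturality of relative completion over $\Q$, recognize it as the morphism of mixed Hodge structures of Corollary~\ref{v to u mhs map} so that passing to the weight graded is legitimate, and then deduce $\Sp$-equivariance from the behaviour of Levi splittings before base changing to $\Ql$. Concretely: the closed immersion $i$ corresponds to the inclusion $\Delta_{g,n}\hookrightarrow\Gamma_{g,n}$, i.e.\ to the homomorphism $\pi_1^\orb(\H^\an_{g,n})\to\pi_1^\orb(\M^\an_{g,n})$, and it intertwines the symplectic representations of $\Delta_{g,n}$ and $\Gamma_{g,n}$ to $\Sp(H)$. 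By the naturality of relative completion recalled in \S5 (applied to these Zariski dense representations), it induces a morphism $\tilde i_\ast\colon\cD_{g,n}\to\cG_{g,n}$ commuting with the projections to $\Sp(H)$; restricting to unipotent radicals and passing to Lie algebras yields $d\tilde i_\ast\colon\v_{g,n}\to\u_{g,n}$, which is exactly the map of Corollary~\ref{v to u mhs map}, hence a morphism of MHS. Since a morphism of MHS is strict for the weight filtration, $d\tilde i_\ast$ is strictly compatible with $W_\bullet\v_{g,n}$ and $W_\bullet\u_{g,n}$, so applying the exact functor $\Gr^W_\bullet$ produces a well-defined map $\Gr^W_\bullet\v_{g,n}\to\Gr^W_\bullet\u_{g,n}$; it is a graded Lie algebra homomorphism because $d\tilde i_\ast$ is a Lie algebra homomorphism and the brackets on $\v_{g,n}$ and $\u_{g,n}$ are morphisms of MHS, hence compatible with $\Gr^W_\bullet$.

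For equivariance, recall from \S6 that $W_{-1}\d_{g,n}=\v_{g,n}$ and $\Gr^W_0\d_{g,n}\cong\sp(H)$, so the Lie bracket on $\Gr^W_\bullet\d_{g,n}$ makes $\Gr^W_0\d_{g,n}\cong\sp(H)$ act on $\Gr^W_\bullet\v_{g,n}$; this is the canonical $\Sp(H)$-action used throughout, and it is independent of the chosen Levi splitting of $\cD_{g,n}\to\Sp(H)$, since two such splittings differ by conjugation by an element of the unipotent radical $\cV_{g,n}$, and $\cV_{g,n}=\exp(W_{-1}\d_{g,n})$ acts trivially on $\Gr^W_\bullet\d_{g,n}$ (the weight filtration is multiplicative and $\v_{g,n}$ sits in weight $\le -1$); the same applies to $\cG_{g,n}$ and $\u_{g,n}$. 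Fix Levi splittings $s_\d$ of $\cD_{g,n}$ and $s_\g$ of $\cG_{g,n}$. Then $\tilde i_\ast\circ s_\d$ is again a Levi splitting of $\cG_{g,n}\to\Sp(H)$, hence $\U_{g,n}$-conjugate to $s_\g$; passing to Lie algebras and to $\Gr^W_\bullet$, and using that conjugation by $\U_{g,n}$ is trivial on $\Gr^W_\bullet\g_{g,n}$, one finds that $\Gr^W_\bullet(d\tilde i_\ast)$ carries the degree-$0$ copy of $\sp(H)$ coming from $s_\d$ isomorphically onto the one coming from $s_\g$. Since $\Gr^W_\bullet(d\tilde i_\ast)$ is a graded Lie algebra map, it therefore intertwines the two $\sp(H)$-actions, so it is $\sp(H)$-equivariant, hence $\Sp(H)$-equivariant. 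Finally, by Theorem~\ref{cont rel comp iso} and Remark~\ref{bc} we have $\cD_{g,n/\Ql}\cong\cD_{g,n}\otimes_\Q\Ql$ and $\cG_{g,n/\Ql}\cong\cG_{g,n}\otimes_\Q\Ql$ compatibly with the projections to $\Sp(H_\Ql)$, and the weight filtrations base change accordingly; hence $\Gr^W_\bullet\v_{g,n/\Ql}\to\Gr^W_\bullet\u_{g,n/\Ql}$ is the base change of the $\Sp(H)$-equivariant graded Lie algebra map just produced, which finishes the argument.

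The step requiring the most care is the identification of the map constructed above with the one of Corollary~\ref{v to u mhs map}: one must track the closed immersion $i$ through the chain discrete groups $\to$ profinite completions $\to$ algebraic fundamental groups of the $\C$-stacks, using $\pi_1^\alg(\H_{g,n/\C})\cong\widehat{\Delta_{g,n}}$, $\pi_1^\alg(\M_{g,n/\C})\cong\widehat{\Gamma_{g,n}}$ and the compatibility of all the symplectic representations in sight, so that the rational Betti and Hodge-theoretic input of \S6 genuinely applies after base change (Theorems~\ref{ctsuc} and~\ref{cont rel comp iso}, Remark~\ref{bc}); this also tacitly uses the Zariski density of the symplectic image of $\Delta_{g,n}$, which underlies the very definition of $\cD_{g,n}$. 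Routing everything through the $\Q$-Betti picture avoids having to construct an $\ell$-adic mixed structure directly. The Levi-splitting bookkeeping for equivariance is the only other non-formal point, and it is standard.
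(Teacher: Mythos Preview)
Your proof is correct and follows essentially the same approach as the paper: invoke Corollary~\ref{v to u mhs map} to get a morphism of MHS, pass to $\Gr^W_\bullet$, and base change to $\Ql$ via Theorem~\ref{cont rel comp iso} and Remark~\ref{bc}. The only difference is stylistic: for equivariance the paper observes directly that $d\tilde i_{\ast/\Ql}$ is $\cD_{g,n/\Ql}$-equivariant and that $\cV_{g,n/\Ql}$ acts trivially on each $\Gr^W_m$ (since $\v_{g,n/\Ql}=W_{-1}\v_{g,n/\Ql}$), so the action factors through $\Sp(H_\Ql)$; your Levi-splitting bookkeeping unwinds the same fact more explicitly but is otherwise the same argument.
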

\begin{proof}

By Corollary \ref{v to u mhs map}, the natural Lie algebra map 
$d\tilde{i}_\ast: \v_{g,n}\to \u_{g,n}$ induced by $i$ is a morphism of MHSs. 
Since $\cD_{g,n/\Ql} \cong \cD_{g,n} \otimes_{\mathbb{Q}} \Ql$ and $\cG_{g,n/\Ql} \cong \cG_{g,n} \otimes_{\mathbb{Q}} \Ql$, it follows that the weight filtrations on $\v_{g,n}$ and $\u_{g,n}$ induce weight filtrations on $\v_{g,n/\Ql}$ and $\u_{g,n/\Ql}$, respectively, and that $d\tilde{i}_\ast$ induces a $\cD_{g,n/\Ql}$-equivariant Lie algebra map $d\tilde{i}_{\ast/\Ql}: \v_{g,n/\Ql} \to \u_{g,n/\Ql}$ that preserves the weight filtrations. Consequently, this induces a graded Lie algebra homomorphism 
\[
\Gr^W_\bullet d\tilde{i}_{\ast/\Ql}: \Gr^W_\bullet \v_{g,n/\Ql} \to \Gr^W_\bullet \u_{g,n/\Ql}.
\]
Since $\v_{g,n/\Ql} = W_{-1} \v_{g,n/\Ql}$, it follows that the adjoint action of $\v_{g,n/\Ql}$ is trivial on each graded piece $\Gr^W_m \v_{g,n/\Ql}$ and trivial on $\Gr^W_m \u_{g,n/\Ql}$ via $d\tilde{i}_{\ast/\Ql}$. This implies that the adjoint action of $\cV_{g,n/\Ql}$ on the associated graded Lie algebras is the identity. Hence the action of $\cD_{g,n/\Ql}$ factors through $\Sp(H_{\Ql})$. Therefore, the map $\Gr^W_\bullet d\tilde{i}_{\ast/\Ql}$ is $\Sp(H_{\Ql})$-equivariant.

\end{proof}

\begin{corollary}\label{section u to v}Let $g\geq 2$ and $n\geq 0$.
Each $\Sp(H_\Ql)$-equivariant graded Lie algebra section of $\Gr^W_\bullet\u_{\cC_{g,n}/\Ql}\to \Gr^W_\bullet\u_{g,n/\Ql}$ induces an $\Sp(H_\Ql)$-equivariant section of $\Gr^W_\bullet\v_{\cC_{g,n}/\Ql}\to \Gr^W_\bullet\v_{g,n/\Ql}$.
\end{corollary}
\begin{proof}
Consider the closed immersion $i:\H_{g,n/\C}\hookrightarrow\M_{g,n/\C}$ and the commutative diagram:
$$
\xymatrix@R=1em@C=2em{
\cC_{\H_{g,n/\C}}\ar[r]\ar[d]_{\pi^\hyp}&\cC_{g,n/\C}\ar[d]^{\pi}\\
\H_{g,n/\C}\ar[r]^i&\M_{g,n/\C}
}
$$
This induces the pullback diagram of algebraic fundamental groups:
$$
\xymatrix@R=1em@C=2em{
1\ar[r]&\pi_1^\alg(C)\ar[r]\ar@{=}[d]&\pi_1^\alg(\cC_{\H_{g,n/\C}})\ar[r]\ar[d]&\pi_1^\alg(\H_{g,n/\C})\ar[r]\ar[d]_{i_\ast}&1\\
1\ar[r]&\pi_1^\alg(C)\ar[r]&\pi_1^\alg(\cC_{g,n/\C})\ar[r]&\pi_1^\alg(\M_{g,n/\C})\ar[r]&1.
}
$$
Taking the continuous unipotent completion of $\pi_1^\alg(C)$, the relative completion of middle and right terms in the diagram with respect to their monodromy representations to $\Sp(H_\Ql)$, we obtain the commutative diagram of pro-algebraic groups over $\Ql$:
$$
\xymatrix@R=1em@C=2em{
1\ar[r]&\cP_\Ql\ar[r]\ar@{=}[d]&\cD_{\cC_{g,n}/\Ql}\ar[r]\ar[d]&\cD_{g,n/\Ql}\ar[r]\ar[d]_{\tilde{i}_{\ast/\Ql}}&1\\
0\ar[r]&\cP_\Ql\ar[r]&\cG_{\cC_{g,n}/\Ql}\ar[r]&\cG_{g,n/\Ql}\ar[r]&1,
}
$$
where the rows are exact and the morphism $\tilde{i}_{\ast/\Ql}$ is the one induced by the naturality of continuous relative completion. Restricting to the unipotent radicals of the middle and right terms, we obtain the diagram of pro-unipotent groups with exact rows:
$$
\xymatrix@R=1em@C=2em{
1\ar[r]&\cP_\Ql\ar[r]\ar@{=}[d]&\cV_{\cC_{g,n}/\Ql}\ar[r]\ar[d]&\cV_{g,n/\Ql}\ar[r]\ar[d]_{\tilde{i}_{\ast/\Ql}}&1\\
0\ar[r]&\cP_\Ql\ar[r]&\U_{\cC_{g,n}/\Ql}\ar[r]&\U_{g,n/\Ql}\ar[r]&1.
}
$$
Now, taking the Lie algebras of the completions, we obtain the commutative diagram of pro-nilpotent Lie algebras:
$$
\xymatrix@R=1em@C=2em{
0\ar[r]&\p_\Ql\ar[r]\ar@{=}[d]&\v_{\cC_{g,n}/\Ql}\ar[r]\ar[d]&\v_{g,n/\Ql}\ar[r]\ar[d]_{d\tilde{i}_{\ast/\Ql}}&0\\
0\ar[r]&\p_\Ql\ar[r]&\u_{\cC_{g,n}/\Ql}\ar[r]&\u_{g,n/\Ql}\ar[r]&0,
}
$$
where the map $d\tilde{i}_{\ast/\Ql}$ is the Lie algebra homomorphism induced by the morphism $\tilde{i}_{\ast/\Ql}$.
Since the functor $\Gr^W_\bullet$ is exact, we have the pullback diagram of graded Lie algebras:
$$
\xymatrix@R=1em@C=2em{
0\ar[r]&\Gr^W_\bullet\p_\Ql\ar[r]\ar@{=}[d]&\Gr^W_\bullet\v_{\cC_{g,n}/\Ql}\ar[r]\ar[d]&\Gr^W_\bullet\v_{g,n/\Ql}\ar[r]\ar[d]_{\Gr^W_\bullet d\tilde{i}_{\ast/\Ql}}&0\\
0\ar[r]&\Gr^W_\bullet\p_\Ql\ar[r]&\Gr^W_\bullet\u_{\cC_{g,n}/\Ql}\ar[r]&\Gr^W_\bullet\u_{g,n/\Ql}\ar[r]&0,
}
$$
where the rows are exact. By Proposition \ref{sp equiv}, the homomorphism $\Gr^W_\bullet d\tilde{i}_{\ast/\Ql}$ is $\Sp(H_\Ql)$-equivariant. It then follows that each $\Sp(H_\Ql)$-equivariant Lie algebra section of $\Gr^W_\bullet\u_{\cC_{g,n}/\Ql}\to \Gr^W_\bullet\u_{g,n/\Ql}$ induces an $\Sp(H_\Ql)$-equivariant section of $\Gr^W_\bullet\v_{\cC_{g,n}/\Ql}\to \Gr^W_\bullet\v_{g,n/\Ql}$.
\end{proof}
As an immediate consequence of Theorem \ref{key result from hyp} and Corollary \ref{section u to v}, we have the following result.
\begin{theorem} \label{no sec for u} With the notation as above, if $g\geq 3$, then $\Gr^W_\bullet\u_{g,1/\Ql}\to \Gr^W_\bullet\u_{g/\Ql}$ admits no $\Sp(H_\Ql)$-equivariant graded Lie algebra section. 
\end{theorem}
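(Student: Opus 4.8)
The plan is to obtain Theorem~\ref{no sec for u} as a formal consequence of Theorem~\ref{key result from hyp} and Corollary~\ref{section u to v}, once the relative completions attached to the universal (hyperelliptic) curve are identified with the relative completions of the corresponding once-pointed moduli problems. So the first step is essentially bookkeeping: recall the standard identifications of stacks $\cC_{g/\C}\cong\M_{g,1/\C}$ and $\cC_{\H_{g/\C}}\cong\H_{g,1/\C}$ over $\C$ (the latter being exactly the identification already invoked in \S7 to pass from the sequence \eqref{hyp_seq} with $n=0$ to \eqref{n=1 hyp}). These isomorphisms intertwine the monodromy representations $\rho^\hyp_{\cC,\ell}$ and the standard representation of $\widehat{\Delta_{g,1}}$ to $\Sp(H_\Ql)$, and similarly on the full moduli side with $\widehat{\Gamma_{g,1}}$. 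Applying continuous relative completion, passing to unipotent radicals, taking Lie algebras, and applying the exact functor $\Gr^W_\bullet$, one therefore gets $\Sp(H_\Ql)$-equivariant isomorphisms of graded Lie algebras $\Gr^W_\bullet\u_{\cC_{g,0}/\Ql}\cong\Gr^W_\bullet\u_{g,1/\Ql}$ and $\Gr^W_\bullet\v_{\cC_{g,0}/\Ql}\cong\Gr^W_\bullet\v_{g,1/\Ql}$, compatible with the projections to $\Gr^W_\bullet\u_{g/\Ql}$ and $\Gr^W_\bullet\v_{g/\Ql}$ respectively.

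With these identifications in hand, I would argue by contradiction. Suppose $\Gr^W_\bullet\u_{g,1/\Ql}\to\Gr^W_\bullet\u_{g/\Ql}$ admits an $\Sp(H_\Ql)$-equivariant graded Lie algebra section $s$. Transporting $s$ along the isomorphism above, $s$ becomes an $\Sp(H_\Ql)$-equivariant graded Lie algebra section of $\Gr^W_\bullet\u_{\cC_{g,0}/\Ql}\to\Gr^W_\bullet\u_{g,0/\Ql}$. Corollary~\ref{section u to v}, applied with $n=0$, then produces from $s$ an $\Sp(H_\Ql)$-equivariant graded Lie algebra section of $\Gr^W_\bullet\v_{\cC_{g,0}/\Ql}\to\Gr^W_\bullet\v_{g,0/\Ql}$; transporting back, this is an $\Sp(H_\Ql)$-equivariant graded Lie algebra section of $\Gr^W_\bullet\v_{g,1/\Ql}\to\Gr^W_\bullet\v_{g/\Ql}$. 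But by Theorem~\ref{key result from hyp}, for $g\ge 3$ the projection $\Gr^W_\bullet\v_{g,1/\Ql}\to\Gr^W_\bullet\v_{g/\Ql}$ admits no such section, a contradiction. Hence no $\Sp(H_\Ql)$-equivariant graded Lie algebra section of $\Gr^W_\bullet\u_{g,1/\Ql}\to\Gr^W_\bullet\u_{g/\Ql}$ exists when $g\ge 3$.

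I do not expect any genuinely hard step here: the whole arithmetic content of the nonsplitting is already packaged into Theorem~\ref{key result from hyp} (which in turn rests on the hyperelliptic derivation computations imported from \cite{wat_hyp_univ}) and into the diagram chase establishing Corollary~\ref{section u to v}. The one place that merits care is verifying that the stack isomorphisms $\cC_{g/\C}\cong\M_{g,1/\C}$ and $\cC_{\H_{g/\C}}\cong\H_{g,1/\C}$ really do respect all the ambient structure — the weight filtrations and the residual $\Sp(H_\Ql)$-actions on the associated graded Lie algebras, and the compatibility with the forgetful projections. Both follow from naturality of (continuous) relative completion together with the fact that the relevant weight filtrations all descend from the same Hodge-theoretic data on $\M_{g,1/\C}$; once this compatibility is recorded, the theorem is immediate.
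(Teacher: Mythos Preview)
Your argument is correct and matches the paper's approach exactly: the paper states the theorem as an immediate consequence of Theorem~\ref{key result from hyp} and Corollary~\ref{section u to v}, and your proof simply spells out the identifications $\cC_{g/\C}\cong\M_{g,1/\C}$ and $\cC_{\H_{g/\C}}\cong\H_{g,1/\C}$ (the latter already used in \S7) that make this immediate.
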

\subsection{Proof of Theorem \ref{main thm for subgrp of mg}}
Suppose that $g\geq 3$ and $\G$ is a finite-index subgroup of $\G_g$ containing $K_g$. Let $\G'$ be the pre-image of $\G$ under the projection $\G_{g,1}\to \G_g$. There exists an exact sequence
$$
1\to \pi_1(S_g)\to \G'\to \G\to 1,
$$
and it is the pullback of the Birman exact sequence for $\G_g$.
We may identify $\G$ and $\G'$ as the orbifold fundamental groups of the underlying analytic orbifolds of finite covers $\M_\G$ and $\M_{\G'}$ of $\M_{g/\C}$ and $\M_{g,1/\C}$, respectively. Denote the relative completions of $\G$ and $\G'$ by $\cG_\G$ and $\cG_{\G'}$ and their unipotent radicals by $\U_\G$ and $\U_{\G'}$, respectively. The Lie algebras of $\cG_\G$, $\cG_{\G'}$, $\U_\G$, and $\U_{\G'}$ are denoted by $\g_\G$, $\g_{\G'}$, $\u_\G$, and $\u_{\G'}$, respectively. By the naturality of relative completion, the projections $\M_\G\to \M_{g/\C}$ and $\M_{\G'}\to \M_{g,1/\C}$ induce morphisms of MHSs $\g_\G\to \g_g$ and $\g_{\G'}\to \g_{g,1}$, which are isomorphisms by \cite[Cor.~6.7]{hain_g3}. Hence, we have  $\Sp(H)$-equivariant isomorphisms 
$$
p_\G: \Gr^W_\bullet\u_\G\cong\Gr^W_\bullet\u_g\,\,\,\,\text{ and }\,\,\,\, p_{\G'}:\Gr^W_\bullet\u_{\G'}\cong\Gr^W_\bullet\u_{g,1}.
$$
Denote the continuous relative completions of $\pi_1^\alg(\M_\G)\cong \widehat{\G}$ and $\pi_1^\alg(\M_{\G'})\cong \widehat{\G'}$ with respect to their representations to $\Sp(H_\Ql)$ by $\cG_{\G/\Ql}$ and $\cG_{\G'/\Ql}$ and their unipotent radicals by $\U_{\G/\Ql}$ and $\U_{\G'/\Ql}$, respectively. The Lie algebras of $\cG_{\G/\Ql}$, $\cG_{\G'/\Ql}$, $\U_{\G/\Ql}$, and $\U_{\G'/\Ql}$ are denoted by $\g_{\G/\Ql}$, $\g_{\G'/\Ql}$, $\u_{\G/\Ql}$, and $\u_{\G'/\Ql}$, respectively. By Theorem \ref{cont rel comp iso}, there are isomorphisms
$$
\cG_{\G/\Ql}\cong \cG_{\G}\otimes_\Q \Ql\,\,\,\,\text{ and } \,\,\,\,\cG_{\G'/\Ql}\cong \cG_{\G'}\otimes_\Q \Ql,
$$
and so the isomorphisms $p_\G$ and $p_{\G'}$ induce corresponding $\Sp(H_\Ql)$-equivariant isomorphisms
$$
p_{\G/\Ql}: \Gr^W_\bullet\u_{\G/\Ql}\cong\Gr^W_\bullet\u_{g/\Ql}\,\,\,\,\text{ and }\,\,\,\, p_{\G'/\Ql}:\Gr^W_\bullet\u_{\G'/\Ql}\cong\Gr^W_\bullet\u_{g,1/\Ql}.
$$

Now, there exists a commutative diagram:
$$
\xymatrix@R=1em@C=2em{
0\ar[r]&\Gr^W_\bullet\p_\Ql\ar[r]\ar@{=}[d]&\Gr^W_\bullet\u_{\G'/\Ql}\ar[r]\ar[d]^{p_{\G'/\Ql}}&\Gr^W_\bullet\u_{\G/\Ql}\ar[r]\ar[d]^{p_{\G/\Ql}}&0\\
0\ar[r]&\Gr^W_\bullet\p_\Ql\ar[r]&\Gr^W_\bullet\u_{g,1/\Ql}\ar[r]&\Gr^W_\bullet\u_{g/\Ql}\ar[r]&0.
}
$$
Let $s$ be a continuous section of the projection $\pi_1^\alg(\M_{\G'})\to \pi_1^\alg(\M_{\G})$. Then $s$ induces an $\Sp(H_\Ql)$-equivariant graded Lie algebra section of $\Gr^W_\bullet\u_{\G'/\Ql}\to \Gr^W_\bullet\u_{\G/\Ql}$, and hence yields a section of $\Gr^W_\bullet\u_{g,1/\Ql}\to\Gr^W_\bullet\u_{g/\Ql}$. By Theorem \ref{no sec for u}, there is no such section. Therefore, $s$ does not exist. 

\subsection{Proof of Theorem \ref{main thm for tg}}
In this section, we will prove the case over $\Ql$, from which the primary case over $\Q$ will follow. 
\begin{theorem}\label{main thm for t over ql} 
If $g\geq 3$, the projection $\Gr^W_\bullet\t_{g,1/\Ql} \to \Gr^W_\bullet\t_{g/\Ql}$ has no $\Sp(H_\Ql)$-equivariant graded Lie algebra section. 
\end{theorem}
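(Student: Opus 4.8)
The plan is to reduce the statement to Theorem~\ref{no sec for u}: I will show that any $\Sp(H_\Ql)$-equivariant graded Lie algebra section of $\Gr^W_\bullet\t_{g,1/\Ql}\to\Gr^W_\bullet\t_{g/\Ql}$ descends to such a section of $\Gr^W_\bullet\u_{g,1/\Ql}\to\Gr^W_\bullet\u_{g/\Ql}$, which does not exist. Base changing the results of \S6 along $\Q\hookrightarrow\Ql$ (Remark~\ref{bc}, Theorem~\ref{cont rel comp iso}), for $n=0,1$ we have the central extensions of graded Lie algebras
$$
0\to\Q(1)\to\Gr^W_\bullet\t_{g,n/\Ql}\to\Gr^W_\bullet\u_{g,n/\Ql}\to0,
$$
whose projections $c_{g,n}\colon\Gr^W_\bullet\t_{g,n/\Ql}\to\Gr^W_\bullet\u_{g,n/\Ql}$ are $\Sp(H_\Ql)$-equivariant by Proposition~\ref{t to u sp}, with $\ker c_{g,n}\cong\Q(1)$ concentrated in weight $-2$. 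By naturality of relative completion and of the maps $\t_{g,n}\to\u_{g,n}$ in $n$, together with the Birman exact sequences of graded Lie algebras for $\t$ (the $\Ql$-form of \eqref{graded malcev seq}) and for $\u$, both with kernel the graded Malcev Lie algebra $\Gr^W_\bullet\p_\Ql$ of $\pi_1(S_g)$, one gets a commutative square
$$
\xymatrix@R=1em@C=2.5em{
\Gr^W_\bullet\t_{g,1/\Ql}\ar[r]^{p}\ar[d]_{c_{g,1}}&\Gr^W_\bullet\t_{g/\Ql}\ar[d]^{c_{g}}\\
\Gr^W_\bullet\u_{g,1/\Ql}\ar[r]^{\bar p}&\Gr^W_\bullet\u_{g/\Ql},
}
$$
in which $p,\bar p$ are the Birman projections and the vertical maps are surjective with kernel the central $\Q(1)$.

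The key step is to check that any $\Sp(H_\Ql)$-equivariant graded Lie algebra section $s$ of $p$ satisfies $s(\ker c_g)\subseteq\ker c_{g,1}$. First, $p$ carries $\ker c_{g,1}$ isomorphically onto $\ker c_g$: since $\ker p=\Gr^W_\bullet\p_\Ql$ and $\Gr^W_{-2}\p\cong V(\lambda_2)$ by Proposition~\ref{sp str of p} contains no trivial $\Sp$-summand, the one-dimensional $\Sp(H_\Ql)$-submodule $\ker c_{g,1}\subseteq\Gr^W_{-2}\t_{g,1/\Ql}$ meets $\ker p$ trivially, so $p$ is injective on it, and from $\bar p\circ c_{g,1}=c_g\circ p$ its image lies in $\ker c_g$, hence equals it. Now fix $v\in\ker c_g$ and let $w\in\ker c_{g,1}$ be its preimage under this isomorphism; then $p(s(v)-w)=0$, so $s(v)-w$ is homogeneous of weight $-2$ and lies in $\ker p=\Gr^W_\bullet\p_\Ql$, hence in $\Gr^W_{-2}\p_\Ql\cong V(\lambda_2)_\Ql$. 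The assignment $v\mapsto s(v)-w$ is an $\Sp(H_\Ql)$-equivariant map from the trivial module $\ker c_g\cong\Q(1)$ to $V(\lambda_2)_\Ql$, hence zero, so $s(v)=w\in\ker c_{g,1}$. Consequently $c_{g,1}\circ s$ annihilates $\ker c_g$ and, as $c_g$ is surjective with kernel $\ker c_g$, factors uniquely as $c_{g,1}\circ s=\bar s\circ c_g$ for an $\Sp(H_\Ql)$-equivariant graded Lie algebra homomorphism $\bar s\colon\Gr^W_\bullet\u_{g/\Ql}\to\Gr^W_\bullet\u_{g,1/\Ql}$.

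Finally, $\bar s$ is a section of $\bar p$: for $x\in\Gr^W_\bullet\u_{g/\Ql}$ choose $\tilde x$ with $c_g(\tilde x)=x$, and then
$$
\bar p(\bar s(x))=\bar p\bigl(c_{g,1}(s(\tilde x))\bigr)=c_g\bigl(p(s(\tilde x))\bigr)=c_g(\tilde x)=x
$$
by commutativity of the square and $p\circ s=\id$. This contradicts Theorem~\ref{no sec for u}, so no section $s$ exists, proving Theorem~\ref{main thm for t over ql}; the case over $\Q$ of Theorem~\ref{main thm for tg} follows, since an $\Sp(H)$-equivariant section over $\Q$ base changes to an $\Sp(H_\Ql)$-equivariant one over $\Ql$. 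I expect the main obstacle to be the second paragraph: a priori an equivariant section of $p$ need not be compatible with the central $\Q(1)$'s, and what forces compatibility — and hence allows the descent — is precisely the representation-theoretic fact that $\Gr^W_{-2}\p$ has no trivial $\Sp(H)$-summand; this is also the only additional input requiring $g\ge3$ beyond Theorem~\ref{no sec for u} itself.
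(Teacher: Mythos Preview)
Your proof is correct and follows essentially the same approach as the paper's: both reduce to Theorem~\ref{no sec for u} by showing that an $\Sp(H_\Ql)$-equivariant section $s$ of $p$ descends to a section $\bar s$ of $\bar p$, using the representation-theoretic fact that $\Gr^W_{-2}\p_\Ql\cong V(\lambda_2)_\Ql$ contains no trivial summand. The only cosmetic difference is that you prove the slightly stronger statement $s(\ker c_g)\subseteq\ker c_{g,1}$ via the isomorphism $p|_{\ker c_{g,1}}$, whereas the paper directly argues that $c_{g,1}\circ s$ restricted to $\Ql(1)$ lands in $\Gr^W_{-2}\p_\Ql$ (since its image in $\Gr^W_\bullet\u_{g/\Ql}$ vanishes) and hence is zero; both routes use the same key input and yield the same descent.
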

\begin{proof}
By Theorem \ref{t to u mor of mhs}, the Lie algebra map $d\tilde{j}_{g,n}:\t_{g,n}\to \u_{g,n}$ induced by the inclusion $j_{g,n}:T_{g,n}\to \G_{g,n}$ is a morphism of MHSs and it is $\G_{g,n}$-equivariant via conjugation action. By Proposition \ref{t to u sp}, the map $d\tilde{j}_{g,n}$ induces an $\Sp(H)$-equivariant graded Lie algebra homomorphism $\Gr^W_\bullet d\tilde{j}_{g,n}:\Gr^W_\bullet\t_{g,n}\to \Gr^W_\bullet\u_{g,n}$. \textcolor{black}{By Proposition \ref{weight agrees lcs}}, the weight filtration of $\t_{g,n}$ agrees with its lower central series $L^\bullet\t_{g,n}$, and so it is defined over $\Q$.
By base change to $\Ql$, the weight filtrations of $\t_{g,n}$ and $\u_{g,n}$ lift to those of $\t_{g,n/\Ql}$ and $\u_{g,n/\Ql}$, and the induced map $d\tilde{j}_{g,n/\Ql}:\t_{g,n/\Ql}\to \u_{g,n/\Ql}$ preserves the weight filtrations and induces an $\Sp(H_\Ql)$-equivariant graded Lie algebra map $\Gr^W_\bullet d\tilde{j}_{g,n/\Ql}: \Gr^W_\bullet\t_{g,n/\Ql}\to \Gr^W_\bullet\u_{g,n/\Ql}$. 
It follows from  \cite[Theorem~3.4]{hain_inf} that the diagram
$$
\xymatrix@R=1em@C=2em{
\t_{g,n}\ar[r]\ar[d]&\u_{g,n}\ar[d]\\
\t_{g}\ar[r]&\u_g
}
$$
is a pullback square. This induces the diagram:
$$
\xymatrix@R=2em@C=2em{
&&0\ar[d]&0\ar[d]\\
&&\Gr^W_\bullet\p_\Ql\ar@{=}[r]\ar[d]&\Gr^W_\bullet\p_\Ql\ar[d]&\\
0\ar[r]&\Ql(1)\ar[r]\ar@{=}[d]&\Gr^W_\bullet\t_{g,1/\Ql}\ar[r]^<{\Gr^W_\bullet d\tilde{j}_{g,1/\Ql}}\ar[d]&\Gr^W_\bullet\u_{g,1/\Ql}\ar[d]\ar[r]&0\\
0\ar[r]&\Ql(1)\ar[r]&\Gr^W_\bullet\t_{g/\Ql}\ar[r]\ar[d]\ar@/_/[u]_s
&\Gr^W_\bullet\u_{g/\Ql}\ar[r]\ar[d]&0\\
&&0&0.
}
$$
Let $s$ be an $\Sp(H_\Ql)$-equivariant graded Lie algebra section of $\Gr^W_\bullet\t_{g,1/\Ql}\to \Gr^W_\bullet\t_{g/\Ql}$. Then the image in $\Gr^W_{-2}\u_{g,1/\Ql}$ of the trivial representation $\Ql(1)$ under the composition $\Gr^W_{-2}d\tilde{j}_{g,1/\Ql}\circ s$  lies in $\Gr^W_{-2}\p_{\Ql}$. Since $\Gr^W_{-2}\p_{\Ql}$ does not contain a copy of $\Ql(1)$ as an $\Sp(H_\Ql)$-representation by Proposition \ref{sp str of p}, the composition $\Gr^W_\bullet d\tilde{j}_{g,1/\Ql}\circ s$ vanishes on $\Ql(1)$. Thus, $s$ induces an $\Sp(H_\Ql)$-equivariant graded Lie algebra section $\bar s$ of $\Gr^W_\bullet\u_{g,1/\Ql}\to \Gr^W_\bullet\u_{g/\Ql}$, which again cannot exist by Theorem \ref{no sec for u}.
Therefore, the section $s$ cannot exist. 
\end{proof}

As to Corollary \ref{no sp on ab of tg}, we will first consider the profinite case. Each section of the projection $\widehat{T_{g,1}}\to \widehat{T_g}$ induces a continuous section of $\widehat{T_{g,1}}^\un_{/\Ql}\to \widehat{T_{g}}^\un_{/\Ql}$. Since $H_1(\widehat{T_{g,n}}^\un_{/\Ql})=H_1(T_{g,n};\Ql)$, it yields a map $H_1(T_{g};\Ql)\to H_1(T_{g,1};\Ql)$.
Suppose that $g\geq 3$ and $\widehat{T_{g,1}}\to \widehat{T_g}$ admits a continuous section $s$ that induces an $\Sp(H_\Ql)$-equivariant map $s^\ab_\Ql:H_1(T_{g};\Ql)\to H_1(T_{g,1};\Ql)$. 
Recall that the weight filtration of $\t_{g,n}$ lifts to that of $\t_{g,n/\Ql}$, and so 
we have the isomorphism of graded Lie algebras
$$
\Gr^W_\bullet\t_{g,n/\Ql}\cong \Gr^\bullet_L\t_{g,n/\Ql}.
$$
The section $s$ induces a Lie algebra section $ds$ of $\t_{g,1/\Ql}\to \t_{g/\Ql}$ and hence a graded Lie algebra section $\Gr^\bullet_L ds$ of $\Gr^\bullet_L\t_{g,1/\Ql}\to \Gr^\bullet_L\t_{g/\Ql}$. Each $\Gr^m_Lds$ is determined by $\Gr^1_Lds =s^\ab_\Ql$, and so is $\Sp(H_\Ql)$-equivariant. Therefore, $\Gr^\bullet_Lds$ is an $\Sp(H_\Ql)$-equivariant graded Lie algebra section, which cannot exist by Theorem \ref{main thm for t over ql}. \\
\indent Now, suppose that $s$ is a section of $T_{g,1}\to T_g$ inducing an $\Sp(H)$-equivariant map $s^\ab:H_1(T_g;\Q)\to H_1(T_{g,1};\Q)$.  Taking profinite completion yields a continuous section of $\widehat{T_{g,1}}\to \widehat{T_g}$, which in turn yields a continuous section $s^\un$ of $\widehat{T_{g,1}}^\un_{/\Ql}\to \widehat{T_{g}}^\un_{/\Ql}$.  Now, note that  $s^\un$ induces a map on the abelianization, which agrees with $s^\ab\otimes \Ql$, and therefore is $\Sp(H_\Ql)$-equivariant, which contradicts Theorem \ref{main thm for t over ql}.




\end{document}